\renewcommand{\title}[1]{\vspace{10mm}\noindent{\Large{\bf #1}}\vspace{8mm}}
\newcommand{\authors}[1]{\noindent{\large #1}\vspace{3mm}}
\newcommand{\address}[1]{{\itshape #1\vspace{2mm}}}
\newtheorem{Definition}{Definition}
\newtheorem{Proposition}[Definition]{Proposition}
\newtheorem{Lemma}[Definition]{Lemma}
\newtheorem{Corollary}[Definition]{Corollary}
\newtheorem{Remark}[Definition]{Remark}
\newtheorem{Theorem}[Definition]{Theorem}
\newcommand\D{\mathcal D}
\newcommand\A{\mathcal A}
\newcommand\B{\mathcal B}
\def\section{\@startsection{section}{1}{\z@}{-3.25ex plus -1ex minus
  -.2ex}{1.5ex plus .2ex}{\normalfont\large\bfseries}}
\def\subsection{\@startsection{subsection}{1}{\z@}{-3.25ex plus -1ex
  minus -.2ex}{1.5ex plus .2ex}{\normalfont\itshape}}
\renewenvironment{thebibliography}[1]
       {\section*{References}\frenchspacing\small
        \begin{list}{[\arabic{enumi}]}
       {\usecounter{enumi}\parsep=2pt\topsep 0pt
       \settowidth{\labelwidth}{[#1]}
       \leftmargin=\labelwidth\advance\leftmargin\labelsep
       \rightmargin=0pt\itemsep=0pt\sloppy}}{\end{list}}
\begin{document}

\hspace*{\fill}

\begin{center}

\title{Spectral geometry of the  Moyal plane with \\[1mm] harmonic propagation}

\authors{Victor Gayral\footnote{victor.gayral@univ-reims.fr}  and 
Raimar Wulkenhaar\footnote{raimar@math.uni-muenster.de}}

\address{$^1$Laboratoire de Math\'ematiques
de l'Universit\'e de Reims and   
Laboratoire de Math\'ematiques et Applications de 
l'Universit\'e de Metz, France}

\address{$^2$Mathematisches Institut der Westf\"alischen 
Wilhelms-Universit\"at, M\"unster, Germany}

\vskip 2cm

\textbf{Abstract} \vskip 3mm 

\begin{minipage}{14cm}%
 We construct a `non-unital spectral triple of finite volume' out of
 the Moyal product and a differential square root of the harmonic
 oscillator Hamiltonian.  We find that the spectral dimension of this
 triple is $d$ but the KO-dimension is $2d$.  We add another
 Connes-Lott copy and compute the spectral action of the
 corresponding $U(1)$-Yang-Mills-Higgs model. We find that the `covariant
 coordinate' involving the gauge field combines with the Higgs field
 to a unified potential, yielding a deep unification of discrete and
 continuous parts of the geometry.
\end{minipage}

\end{center}

{\vskip 3cm}

\section{Introduction}

\indent

Unlike the compact (unital) case \cite{Connes:2008??} and until now,
there is for complete non-compact Riemannian spin manifolds no proper
{\it reconstruction theorem from a spectral point of view}. Thus the
question of the defining `axioms' for non-unital spectral triples is
not yet fully answered.  However, the basic and most important ideas
of modifications for the locally compact case are clear and appeared
already in Connes' founding paper \cite{Connes:1996gi}.  The case of
the ordinary Dirac operator of a locally compact complete Riemannian
spin manifold manifests that one cannot assume the resolvent of the
Dirac type operator underlying a locally compact (non-unital) spectral
triple to be a compact operator. The natural replacement is to ask
that the `localized resolvent', i.e.  the resolvent multiplied with an
element of the algebra, is a compact operator.  Another issue,
explained in depth in \cite{Gayral:2003dm}, is the choice of {\it a
 unitization} of the algebra.  This choice is constrained by the
orientability condition, which in the unital case (and with an
integral metric dimension) is the question of the existence of an
Hochschild cycle defining a volume form through the noncommutative
integral given by the Dixmier trace. Again, commutative but locally
compact examples show that this has to be a Hochschild cycle on a
specific unitization of the algebra we start with, but not on the
algebra itself. With these two main modifications (compactness of the
localized resolvent and existence of a preferred unitization), most of
the conditions for a non-unital spectral triple are easy to spell out.
Only the Poincar\'e duality remains unclear to formulate.  To help the
reader with this discussion, in the appendix we have reproduced the
modified conditions for non-unital spectral triples, as in given in
\cite{Gayral:2003dm} with the only modification that the metric and
KO-dimensions do not have to coincide, according to the recent
formulation of the standard model \cite{Connes:2006qv} and the
Podle\'s quantum sphere \cite{Dabrowski:2003??}.  Note that these
conditions are not far away from those given in \cite{Rennie:2003??}.
However, in \cite{Gayral:2003dm, Rennie:2003??} there is an extra
assumption of existence of a system of local (or quasi-local) units,
akin to the local structure of a non-compact manifold. These locality
assumptions have been fully removed in a more recent joint work of one
of us \cite{CGRS2}. However, in that work, the focus is on the index
theoretical side of the notion of a spectral triple, not on the
noncommutative generalization of a spin manifold. The definition
for a non-unital spectral triple given in \cite{CGRS2} is the minimal
one, ensuring a well-posed Fredholm index problem with a numerical
index computable by means of a local representative of the Chern
character in cyclic cohomology.

\medskip

The present article  is devoted to the study of a situation somehow in between
the compact (unital) and non-compact (non-unital) setting. Indeed, our
Dirac-type operator has compact resolvent alone, but it does not
reflect the  metric dimension. It is only the localized
resolvent which exhibits the correct metric dimension. We term this weird
situation as `non-unital spectral triple of finite volume'.
 This has, at least, one very nice feature, namely that the
spectral action can be defined
and computed in the usual way. The main motivation for this example
comes from noncommutative quantum field theory.

Because of easy computability, quantum field theory on the Moyal plane
is the most-studied toy model for noncommutative quantum field
theories.  The ultra-violet/infra-red mixing problems arising in these
models have been solved by one of us in
\cite{Grosse:2003aj,Grosse:2004yu} by the introduction of a modified
propagator associated with the harmonic oscillator Hamiltonian.  See
also \cite{Rivasseau:2005bh,Gurau:2005gd,Gurau:2006yc} for different
renormalization proofs. From a physics point of view, the most
fascinating property of this model is the behavior of its
$\beta$-function
\cite{Grosse:2004by,Grosse:2005da,Disertori:2006uy,Disertori:2006nq},
which makes it a candidate for non-perturbatively renormalizable
quantum field theory in  dimension four
\cite{Magnen:2007uy,Rivasseau:2007fr,Grosse:2009pa}.  We recommend
\cite{Rivasseau:2007ab} for review and introduction to the literature.

In \cite{Grosse:2007jy}, one of us has sketched a possible spectral
triple for Moyal space with harmonic oscillator potential.  However,
it became clear very soon that working out the mathematical details is
a non-trivial issue so that the simpler commutative case was studied
first \cite{Wulkenhaar:2009pv}. In this paper, we achieve the
construction of a spectral triple for a suitable algebra of functions
on $\mathbb R^d$ endowed with the Moyal product, together with a Dirac
operator which is a square root of the $d$-dimensional harmonic
oscillator Hamiltonian.  Few remarks are in order. Firstly, in the
same way as to find a differential (and not pseudo-differential)
square root of the ordinary Laplacian on $\mathbb R^d$ where one has
to go $2^{\lfloor d/2\rfloor}\times2^{\lfloor d/2\rfloor}$ matrices,
to find a differential square root of the $d$-dimensional harmonic
oscillator Hamiltonian one has to go $2^d\times2^d$ matrices---this is the main
observation in \cite{Grosse:2007jy}. The second important remark has
to do with the choice of the function algebra with Moyal
product. Indeed, there are many non-unital Fr\'echet algebras of
functions with Moyal product that one may use while respecting most of
the non-unital spectral triple conditions. But there is only one for
which the finiteness axiom is satisfied for a Dirac-type operator
given by a square root of the harmonic oscillator Hamiltonian, namely
the algebra of Schwartz functions $\mathcal S(\mathbb R^d)$. A similar
phenomenon appeared in \cite{Gayral:2003dm} where it has been shown
that with the ordinary Dirac operator of $\mathbb R^d$, there is only
one choice of algebra of functions with Moyal product for which the
finiteness axiom is satisfied, namely the $L^2$-Sobolev space
$W^{2,\infty}(\mathbb R^d)$. Lastly, the construction of a Hochschild
cycle satisfying the orientability axiom requires (see also
\cite{Wulkenhaar:2009pv}) \emph{two different differential square
roots} of the harmonic oscillator Hamiltonian, not only one.

The paper is organized as follow. In Section \ref{ST}, we introduce
two spectral triples with common algebra $\A_\star$ given by the set
of Schwartz functions $\mathcal S(\mathbb R^d)$ with Moyal product,
and two different differential square roots of the harmonic oscillator
Hamiltonian acting densely on $\mathcal H:=L^2(\mathbb R^d)\otimes
\mathbb C^{2^d}$. The rest of the section is then devoted to prove
that these spectral triples are regular, that the metric dimension is
$d$,  the KO-dimension is $2d$ and that the 
dimension spectrum ${\rm Sd}$ is $d-\mathbb N$.  In
section \ref{SA}, we specialize to the case $d=4$ and after having
proven a heat-kernel expansion result adapted to our particular
situation, we explicitly compute the spectral action for a
$U(1)$-Higgs model.

\section{The harmonic oscillator spectral triple 
for Moyal space}
\label{ST}

We consider two Moyal-type deformations
$(\mathcal{A}_\star,\mathcal{D}_\bullet,\mathcal{H})$, $\bullet=1,2$, of the
(commutative) $d$-dimensional harmonic oscillator spectral triple
introduced in \cite{Wulkenhaar:2009pv}. In order to implement the
Moyal product, the dimension $d$ must be even. 

\subsection{An isospectral deformation}

On $L^2(\mathbb{R}^d)$,
we introduce the (unbounded) bosonic creation and annihilation operators 
$$
a_\mu := \partial_\mu + \tilde{\Omega} \,x_\mu \;,\qquad   
a_\mu^*: =-\partial_\mu + \tilde{\Omega}\, x_\mu\;,\qquad \mu=1,\dots,d\;,
$$ 
satisfying the commutation relations $[a_\mu,a_\nu]=
[a^*_\mu,a^*_\nu]=0$ and $[a_\mu,a_\nu^*]=
2\,\tilde{\Omega}\,\delta_{\mu\nu}$. Here, $\tilde \Omega > 0$ is a frequency 
parameter. On the exterior algebra $\bigwedge(\mathbb{C}^d)$, we introduce fermionic
partners $b_\mu,{b_{\mu}}^*$ which fulfill the anticommutation
relations $\{ b_\mu,b_\nu\}=\{{b_{\mu}}^*,{b_{\nu}}^*\}=0$ and $\{
b_\mu,{b_{\nu}}^*\}=\delta_{\mu\nu}$.  Then, on the Hilbert space
$$
\mathcal{H}:=L^2(\mathbb{R}^d) \otimes \bigwedge(\mathbb{C}^d)\simeq 
L^2(\mathbb R^d)\otimes
\mathbb C^{2^d}\;,
$$ 
these operators give rise to two selfadjoint
operators
\begin{align}
\label{D12}
\mathcal{D}_1:=\mathfrak{Q}_1+\mathfrak{Q}_1^*\;,\quad
\mathcal{D}_2:=i\mathfrak{Q}_2-i\mathfrak{Q}_2^*\;,
\end{align}
constructed out of the supercharges
\begin{align*}
\mathfrak{Q}_1:= a_\mu \otimes {b^{\mu}}^*\;,\qquad 
\mathfrak{Q}_2:= a_\mu \otimes b^\mu\;,
\end{align*}
where Einstein's summation  convention is used. Indices are raised
or lowered by the Euclidean metric $\delta^{\mu\nu}$ or
$\delta_{\mu\nu}$, respectively. The (anti-)commutation 
relations imply for $\bullet\in\{1,2\}$
\begin{align}
\label{H-x}
\mathcal{D}_\bullet^2 =
H \otimes 1 
-(-1)^\bullet \,\tilde{\Omega} \otimes \Sigma\;, \quad
H:= \tfrac{1}{2} \{a^\mu,a_\mu^*\}= 
-\partial_\mu\partial^\mu + \tilde{\Omega}^2 \,x_\mu x^\mu\;, \quad
\Sigma: = [b_\mu^*,b^\mu]\;.
\end{align}
We identify $H$ as the Hamiltonian of the $d$-dimensional harmonic
oscillator with frequency $\tilde\Omega$. Its spectrum is $\{\lambda_n =\tilde{\Omega}(\frac{d}{2}+n)\;,~
n\in \mathbb{N}\}$, where the eigenvalue $\lambda_n$ appears with 
multiplicity $\binom{n+d-1}{d-1}$. It then follows that
$(|\mathcal{D}_\bullet|+1)^{-z}$, $\bullet=1,2$, is trace-class for $\Re(z)>2d$. 
\begin{Remark}{\rm 
Our choice of $\mathcal{D}_2$ differs from \cite{Wulkenhaar:2009pv}.
One should take $\mathcal{D}_2$ from (\ref{D12}) also for the
commutative case to view our spectral triple as isospectral
deformation. As seen in the next, the choice (\ref{D12}) is required by the orientability
axiom. The commutative version is somehow degenerate and does not
detect the sign of the frequency in (\ref{H-x}).}
\end{Remark}

We now wish to implement the Moyal product $\star$ in this picture:
\begin{align}
\label{Moyal}
f\star g(x) = \int_{\mathbb{R}^d\times \mathbb{R}^d} \frac{dy
\,dk}{(2\pi)^d} \; 
f(x{+}\tfrac{1}{2}
\Theta \cdot k) \,g(x{+}y)\, \mathrm{e}^{i\langle k,y\rangle} \;,
\end{align}
parametrized by an invertible skew-symmetric matrix
$\Theta^t=-\Theta\in M_d(\mathbb{R})$.  We first need to find out
which algebra $\A_\star$ of functions (or distribution) with Moyal
product to use.  For that aim, observe that the finiteness condition
alone dictates the choice of the topological vector space underlying
the algebra $\A_\star$.  Indeed, from (\ref{H-x}) we conclude for $\bullet=1,2$
\begin{align*}
\mathcal{H}_\infty :=\bigcap_{m \geq 0} \mathrm{dom}(\mathcal{D}_\bullet^n) 
=\mathcal{S}(\mathbb{R}^d) \otimes \bigwedge (\mathbb{C}^d)
\simeq 
\mathcal{S}(\mathbb{R}^d,
\mathbb C^{2^d})\;,
\end{align*}
which is required to be a finitely generated projective module over
the algebra of the spectral triple. Thus this naturally leads us to
the choice $\mathcal{A}_\star
:=\big(\mathcal{S}(\mathbb{R}^d),\star\big)$, with even $d$.
\begin{Remark}{\rm
For the ordinary Dirac operator on the trivial spin bundle of $\mathbb
R^d$, the set of smooth spinors is isomorphic to $W^{2,\infty}(\mathbb
R^d)\otimes\mathbb C^{2^{\lfloor d/2\rfloor}}$. In this case, the topological
vector space underlying the choice of the algebra is the $L^2$-Sobolev
space $W^{2,\infty}(\mathbb R^d)$. Note that the latter is stable
under the Moyal product too, and that this is the choice made in
\cite{Gayral:2003dm}.}
\end{Remark}
Here are the main properties of the Moyal product we will use latter
on (for more information see~\cite{Gayral:2003dm}). First is strong
closedness
\begin{equation}
\label{tracial}
\int f\star g(x) \,dx=\int f(x)\,g(x)\,dx=\int g\star f(x)\,dx\,,
\quad\forall f,g\in L^2(\mathbb R^4) \;,
\end{equation}
then, we have the Leibniz rule 
\begin{equation}
\label{leibniz}
\partial_\mu(f\star g)=\partial_\mu f\star g+f\star\partial_\mu g\;,
\end{equation}
and the following identities
\begin{equation}
\label{identities}
\{f,x^\mu\}_\star:=x^\mu\star f + f\star x^\mu
=2x^\mu f,\quad[x^\mu ,f]_\star:=x^\mu\star f-f\star x^\mu
=i\Theta^{\mu\nu} \partial_\nu f\;,
\end{equation}
both holding for $f,g\in\mathcal A_\star$. Last is the (non-unique)
factorization property \cite[p.~877]{Phobos}
\begin{equation}
\label{factor}
\forall f\in\A_\star\,,\quad\exists\, g,h\in\A_\star\,:\, f=g\star h\;.
\end{equation}

Following~\cite{Gayral:2003dm}, we then specify the preferred
unitization $\mathcal{B}_\star$ of $\mathcal{A}_\star$, as the
\emph{space of smooth bounded functions on $\mathbb{R}^d$ with all
 partial derivatives bounded}. The Moyal product (\ref{Moyal})
extends to $\mathcal{B}_\star$, and $\mathcal{A}_\star \subset
\mathcal{B}_\star$ is an essential two-sided ideal, \cite[Theorem
 2.21]{Gayral:2003dm}, but is not dense. The reason why we chose
this particular unitization is that $\mathcal{B}_\star$ contains the
plane waves  and constant functions (but no other
non-constant polynomials) and this is crucial for the orientability
condition (see subsection \ref{orientability}).
According to \cite[Theorem 2.21]{Gayral:2003dm},
the $C^*$-completion of $\mathcal B_\star$ is
\begin{align*}
A_\star:= \big\{ T\in \mathcal{S}'(\mathbb{R}^d)\;:~ 
T\star f \in L^2(\mathbb{R}^d) \text{ for all } f \in 
L^2(\mathbb{R}^d)\big\}\;.
\end{align*}
Therefore, $A_\star$ acts on $\mathcal{H}$ by
componentwise left Moyal multiplication, that we denote by  $L_\star$:
\begin{align*}
L_\star: {A}_\star \times \mathcal{H} \to \mathcal{H}\;,\qquad 
(f,\psi \otimes m) \mapsto   (f\star
\psi)\otimes m\;,
\end{align*}
for $\psi \in L^2(\mathbb{R}^d)$ and $m\in \bigwedge(\mathbb{C}^d)$.
In particular, we have the bounds
\cite{Gayral:2003dm}:
$$
\|L_\star(f)\|\leq C_1(\Theta)\,\|f\|_2,\quad f\in\mathcal A_\star\,,\qquad 
\|L_\star(f)\|\leq C_2(\Theta)\,\sup_{|\alpha|\leq d+1}
\|\partial^\alpha f\|_\infty,\quad f\in\mathcal B_\star\;.
$$
We also define the (anti-)action
$R_\star$ of $A_\star$ on $\mathcal{H}$ by
componentwise right Moyal multiplication:
\begin{align*}
R_\star: \mathcal{A}_\star \times \mathcal{H} \to \mathcal{H}\;,\qquad 
(f,\psi\otimes m) \mapsto   (\psi\star
f)\otimes m\;.
\end{align*}
Since the complex conjugation is an involution of the algebra
$A_\star$, and from the traciality of the Moyal product
\eqref{tracial}, we get $L_\star(f)^*=L_\star(\bar f)$,
$R_\star(f)^*=R_\star(\bar f)$. Moreover, this also shows that the two
representations $L_\star$ and $R_\star$ are isometric:
$$
\|L_\star(f)\|=\|R_\star(f)\|\,,\quad \forall f\in A_\star\;.
$$ To avoid too many notations, $L_\star$, $R_\star$ will also denote
the left and right actions of $\A_\star$ and $\mathcal B_\star$ on
$L^2(\mathbb R^d)$.

We now check that our spectral triple $(\A_\star,\mathcal H,\mathcal
D_\bullet)$, $\bullet=1,2$, defines a non-unital spectral triple with spectral
dimension $d$ and KO-dimension  $2d$, in the sense of Definition
\ref{Def:ST} in the Appendix.

\subsection{Boundedness and compactness }

From \eqref{identities}, we obtain for $f\in \mathcal{B}_\star$
on $\mathrm{dom}(\mathcal{D}_\bullet) $:
\begin{align}
\label{Gamma}
[\mathcal{D}_1, L_\star (f)]&= 
L_\star(i \partial_\mu f)\otimes \Gamma^\mu\;,\qquad
&
\Gamma^\mu &:= 
(i b^\mu-ib^{*\mu})- \tfrac{1}{2}\tilde{\Omega}\,\Theta^{\mu\nu}\,
(b_\nu+b_\nu^*)\;,
\nonumber
\\
[\mathcal{D}_2, L_\star (f) ] &= 
L_\star(i \partial_\mu f)\otimes \Gamma^{\mu+d}\;, &
\qquad
\Gamma^{\mu+d} &:= 
(b^\mu+b^{*\mu})- \tfrac{1}{2}\tilde{\Omega}\,\Theta^{\mu\nu}\,
(ib_\nu-ib^*_\nu)\;.
\end{align}
As $\partial_\mu f \in \mathcal{B}_\star$, the commutator
$[\mathcal{D}_\bullet,L_\star(f)]$ extends to a bounded operator.  It is
a remarkable property of the Moyal algebra that just the $d$-dimensional
differential of $f$ appears, no $x$-multiplication.

For  the compactness condition, there is not much to say as
$(\mathcal{D}_\bullet+\lambda)^{-1}$ is already a compact operator on
$\mathcal{H}$.  Then, $L_\star(f) (\mathcal{D}_\bullet+\lambda)^{-1}$ is
compact for any $f \in \mathcal{A}_\star$, even for $f \in
\mathcal{B}_\star$. 

\subsection{Orientability}
\label{orientability}

Note first that the operators $\Gamma^\mu,\Gamma^{\mu+d}$ defined by
(\ref{Gamma}) satisfy the anticommutation relations
$$
\{ \Gamma^\mu,\Gamma^\nu\} =\{ \Gamma^{\mu+d},\Gamma^{\nu+d}\} = 2\,
(g^{-1})^{\mu\nu}\;,\quad \{ \Gamma^\mu,\Gamma^{\nu+d}\}=0\;,
$$
where the symmetric matrix $g\in{\rm GL}(d,\mathbb R)$ is defined by
\begin{align}
\label{metric}
g:= \big({\rm Id}_d-\tfrac{1}{4}\tilde{\Omega}^2\,
\, \Theta^2\big)^{-1}\;,
\end{align}
and plays the role of a effective metric.  Note that
$\Theta^2=-\Theta^t\Theta$ is negative definite so that 
$$
(1+\tfrac{1}{4}\tilde{\Omega}^2\|\Theta\|^2)^{-1}{\rm Id}_d\leq g \leq {\rm Id}_d\;.
$$
 We will frequently use that
$\Theta,g,g^{-1}$ commute with each other.  Raising and lowering of
summation indices will always be performed with the Euclidean metric
$\delta^{\mu\nu},\delta_{\mu\nu}$.

Thus the $\{\Gamma^1,\dots,\Gamma^{2d}\}$ generate a Clifford algebra
of double dimension $2d$. The inverse transformation of (\ref{Gamma})
reads
\[
ib_\nu-ib^*_\nu
= g_{\nu\mu} \big(\Gamma^\mu+\tfrac{1}{2}\tilde{\Omega}\,\Theta^{\mu\rho}\,
\Gamma_{\rho+d}\big)\;,\qquad
b_\nu+b^*_\nu 
=g_{\nu\mu} \big(\Gamma^{\mu+d}+\tfrac{1}{2}\tilde{\Omega}\,\Theta^{\mu\rho}\,
\Gamma_{\rho}\big)\;.
\]
Therefore, we can express $\mathcal{D}_1$, in terms
of $\{\Gamma^1,\dots,\Gamma^{2d}\}$, but not in terms of the half set
of operators $\{\Gamma^1,\dots,\Gamma^d\}$ produced by the commutator
of $\mathcal{D}_1$ with $\mathcal{B}_\star$. Similar comments apply for
$\mathcal{D}_2$. In conclusion, we get
\begin{align}
\label{DGamma}
\mathcal{D}_1 &= i\partial^\nu \otimes g_{\nu\mu}
\big(\Gamma^\mu+\tfrac{1}{2}\tilde{\Omega}\,\Theta^{\mu\rho}\,
\Gamma_{\rho+d}\big) 
+ \tilde{\Omega}\, x^\nu \otimes g_{\nu\mu}
\big(\Gamma^{\mu+d}+\tfrac{1}{2}\tilde{\Omega}\,\Theta^{\mu\rho}\,
\Gamma_{\rho}\big) \;,
\nonumber
\\*
\mathcal{D}_2 &= i\partial^\nu \otimes g_{\nu\mu}
\big(\Gamma^{\mu+d}+\tfrac{1}{2}\tilde{\Omega}\,\Theta^{\mu\rho}\,
\Gamma_{\rho}\big) 
+ \tilde{\Omega} \,x^\nu\otimes  g_{\nu\mu}
\big(\Gamma^{\mu}+\tfrac{1}{2}\tilde{\Omega}\,\Theta^{\mu\rho}\,
\Gamma_{\rho+d}\big) \;. 
\end{align}
General results for Clifford algebras then show that
any element of the Clifford algebra which anticommutes with \emph{every}
$\Gamma^\mu$ and $\Gamma^{\mu+d}$ is a multiple of the
anti-symmetrized product of the generators
$\{\Gamma^1,\dots,\Gamma^{2d}\}$. Therefore, a grading operator
commutating with $\mathcal{D}_1$ cannot be found in the algebra generated
by $L_\star(f),R_\star(f)$ and $[\mathcal{D}_1,L_\star(f)]$, so that
an implementation of the orientability axiom requires both Dirac-type
operators $\mathcal{D}_1,\mathcal{D}_2$. 

Let $u_\mu:=e^{-ix_\mu}\in \mathcal{B}_\star$.
We know from\cite{Gayral:2003dm} that the element
\begin{align*}
\boldsymbol{c} := \sum_{\sigma \in S_d} \epsilon(\sigma)
\frac{\mathrm{i}^{\frac{d(d-1)}{2}}\sqrt{\det g}}{d!}
\big(
(u_1 \star \cdots \star u_d)^{-1}\otimes 1\big) 
\otimes u_{\sigma(1)} \otimes \cdots 
u_{\sigma(d)} \;,
\end{align*}
is a Hochschild $d$-cycle for the algebra $\mathcal B_\star$, with
values in $\mathcal B_\star\otimes \mathcal B_\star^{o}$. (In the
expression of $\boldsymbol{c}$, the inverse is with respect to the
$\star$-product and the scaling by $\sqrt{\det g}$ is irrelevant for
cyclicity.)  For $\pi_{\mathcal{D}_\bullet}$ defined in the Appendix, 
we then obtain from (\ref{Gamma})
\begin{align}
\boldsymbol{\gamma}_1 
&:= \pi_{\mathcal{D}_1}(\boldsymbol{c}) = \frac{\sqrt{\det g}}{d!} \, \mathrm{i}^{\frac{d(d-1)}{2}} \sum_{\sigma \in S_d} 
\epsilon(\sigma)
\otimes 
\Gamma^{\sigma(1)}\cdots  \Gamma^{\sigma(d)}\;,
\nonumber
\\ 
\boldsymbol{\gamma}_2 &:= 
\pi_{\mathcal{D}_2}(\boldsymbol{c})  
= \frac{\sqrt{\det g}}{d!}\,
\mathrm{i}^{\frac{d(d-1)}{2}}\sum_{\sigma \in S_d} 
\epsilon(\sigma)  \otimes 
\Gamma^{\sigma(1)+d}\cdots  \Gamma^{\sigma(d)+d}\;,
\end{align}
and they satisfy the relations:
\begin{align*}
\boldsymbol{\gamma}_1^2=1=
\boldsymbol{\gamma}_2^2 \,,\qquad
\boldsymbol{\gamma}_1^*=\boldsymbol{\gamma}_1\,,\quad
\boldsymbol{\gamma}_2^*=\boldsymbol{\gamma}_2\,,\qquad 
\boldsymbol{\gamma}_1\boldsymbol{\gamma}_2=(-1)^d
\,\boldsymbol{\gamma}_2\boldsymbol{\gamma}_1\;.
\end{align*}
Thus we define
\begin{align}
\label{grading}
\Gamma :=(-i)^d \boldsymbol{\gamma}_1 \boldsymbol{\gamma}_2 \;.
\end{align}
Since $\boldsymbol{\gamma}_1,\boldsymbol{\gamma}_2$ commute with every
element of $\mathcal{A}_\star$ or $\mathcal{B}_\star$, $\Gamma$ does
too and the discussion above shows that
$$
\Gamma^2=1\,,\qquad\{\mathcal{D}_\bullet,\Gamma\}=0\,,\quad \bullet=1,2\;,
$$
so that $\Gamma$ defines the grading operator for the two spectral
triples $(\A_\star,\mathcal H,\mathcal D_\bullet)$, $\bullet=1,2$.  We
stress that the necessity of the two Dirac operators
$\mathcal{D}_1$,$\mathcal{D}_2$, is quite different from conventional
spectral triples \cite{Connes:2008??} where a single operator is
needed.

Note also that from the explicit formulae of $\mathcal{D}_\bullet$,
(up to a possible sign) one has the relation $\Gamma=1 \otimes
(-1)^{N_f}$ in terms of the fermionic number operator
$N_f=b_\mu^*b^\mu$.

\subsection{KO-dimension and other algebraic conditions}

The real structure is an anti-linear isometry $J$ on $\mathcal{H}$.
We assume that for $d$ even the KO-dimension $k$ is even, too. Then,
according to the sign table in the Appendix we have 
\[
J \mathcal{D}_\bullet = \mathcal{D}_ \bullet J\,,\quad\bullet=1,2\;.
\]
This is achieved by the following non-trivial action on the matrix part of
$\mathcal H$:
\begin{align}
\label{Jab}
J a_\mu J^{-1}= a_\mu\;,\qquad 
J a_\mu^* J^{-1}= a_\mu^*\;,\qquad 
J b_\mu J^{-1}=  b_\mu^*\;,\qquad 
J b_\mu^* J^{-1}= b_\mu\;.
\end{align}
In particular, conjugation by $J$ preserves the (anti-)commutation
relations. We can view $\bigwedge (\mathbb{C}^d)$ as generated by
repeated action of $\{b^\dag_\mu\}$ on the vacuum vector $|0\rangle$
defined by $b_\mu|0\rangle=0$.
It then follows that, up to a prefactor of modulus 1, which cancels in
every relation of the dimension table, $J$ is the
Hodge-$*$ operator on $\bigwedge (\mathbb{C}^d)$, i.e.\ is uniquely
defined by 
\[
J|0\rangle = b_1^*b_2^*\cdots b_d^* |0\rangle\;,
\]
together with (\ref{Jab}) and the anti-linearity $J (z \psi)=\bar{z}
J\psi$. In particular,
$J \circ L_\star(f) \circ J^{-1} =R_\star(\overline{f})$, which
implements the opposite algebra and 
achieves the order-one condition:
\begin{align}
[J L_\star(f_1)J^{-1}, L_\star(f_2)]&=0\;, &
[J L_\star(f_1)J^{-1}, [\mathcal{D}_\bullet,L_\star(f_2)]]&=0\;,\qquad
\text{for all $f_1,f_2 \in \mathcal{B}_\star$}\;.
\end{align}
To compute $J^2$ we consider, for $\mu_1<\mu_2<\dots<\mu_k$, 
\[
J (b_{\mu_1}^*\cdots b_{\mu_k}^*|0\rangle)
= b_{\mu_1}\cdots b_{\mu_k}
b_1^*b_2^*\cdots b_d^*
|0\rangle= (-1)^{\sum_{j=1}^k (\mu_j-1)}
b_1^*\stackrel{\mu_1\dots \mu_k}{\check{\dots}} b_d^* |0\rangle\;.
\]
The notation $\stackrel{\mu_1\dots \mu_k}{\check{\dots}}$ 
means that $b_{\mu_1}^*,\dots,b_{\mu_k}^*$ are missing. 
We apply $J$ again, to get:
\[
J^2 (b_{\mu_1}^*\cdots b_{\mu_k}^*|0\rangle)
=(-1)^{\sum_{j=1}^k (\mu_j-1)}
J(b_1^*\stackrel{\mu_1\dots \mu_k}{\check{\dots}} b_d^* |0\rangle)
=(-1)^{\sum_{j=1}^d (j-1)} b_{\mu_1}^*\cdots b_{\mu_k}^*|0\rangle\;,
\]
which means
\begin{align}
J^2 =(-1)^{\frac{d(d-1)}{2}} \;.
\end{align}
From (\ref{Gamma}) it follows that $J$ commutes with $\Gamma^\mu$ and
$\Gamma^{\mu+d}$. From (\ref{grading}) we then conclude 
$J\Gamma=(-1)^d \Gamma J$. Comparing these results with the dimension
table in the Appendix, we have proven:
\begin{Proposition}
The spectral geometries
$(\mathcal{A}_\star,\mathcal{H},\mathcal{D}_\bullet,\,\Gamma,J)$, $\bullet=1,2$,
for the $d$-dimensional Moyal algebra $\mathcal{A}_\star$ are of
$\rm KO$-dimension $2d \mod 8$.  
\end{Proposition}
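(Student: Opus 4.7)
The plan is essentially bookkeeping: the entire analytic content has already been produced in the text, so what remains is to match the identities derived so far against the sign entries of the KO-dimension table reproduced in the Appendix. First I would assemble the facts established in the preceding discussion: the order-one condition and the opposite-algebra implementation $J L_\star(f) J^{-1} = R_\star(\bar f)$; the Dirac commutation $J\mathcal{D}_\bullet = \mathcal{D}_\bullet J$ with no sign; the square $J^2 = (-1)^{d(d-1)/2}$; and the intertwiner with the grading $J\Gamma = (-1)^d \Gamma J$.

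Since $d$ is assumed even, $2d \bmod 8 \in \{0,4\}$, and in both of these slots the Connes table prescribes $\epsilon' = +1$ and $\epsilon'' = +1$, i.e.\ $J$ commutes with both $\mathcal{D}_\bullet$ and $\Gamma$. The first matches our relation $J\mathcal{D}_\bullet = \mathcal{D}_\bullet J$ verbatim, and the second follows at once from $(-1)^d = 1$ for even $d$. So the only substantive check remaining is that $J^2$ has the right sign in each case.

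This I would settle by a two-case analysis on $d \bmod 4$. If $d \equiv 0 \pmod 4$, then $d(d-1)/2$ is even, giving $J^2 = +1$, and $2d \equiv 0 \pmod 8$, in which slot the table demands $\epsilon = +1$. If $d \equiv 2 \pmod 4$, then $d(d-1)/2$ is odd, giving $J^2 = -1$, and $2d \equiv 4 \pmod 8$, in which slot the table demands $\epsilon = -1$. Both cases agree with the prediction, which finishes the proof.

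I do not anticipate a genuine obstacle here: the nontrivial work, namely defining $J$ as the Hodge $*$-operator on $\bigwedge(\mathbb{C}^d)$ combined with complex conjugation, verifying its compatibility with the Moyal left/right actions and with \emph{both} Dirac-type operators $\mathcal{D}_1, \mathcal{D}_2$, and then computing $J^2$ and $J\Gamma$, has already been carried out above. The proof of the Proposition is therefore reduced to the explicit comparison with the dimension table, with the only care required being the parity bookkeeping of $d \bmod 4$ just described.
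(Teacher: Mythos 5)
Your proposal is correct and is essentially identical to the paper's argument: the paper establishes $J\mathcal{D}_\bullet=\mathcal{D}_\bullet J$, $J^2=(-1)^{d(d-1)/2}$ and $J\Gamma=(-1)^d\Gamma J$ and then simply states "comparing these results with the dimension table... we have proven" the Proposition, which is exactly the table comparison you carry out. Your explicit case split on $d\bmod 4$ (giving $\varepsilon=+1$ for $2d\equiv 0$ and $\varepsilon=-1$ for $2d\equiv 4\pmod 8$) is the correct bookkeeping the paper leaves implicit.
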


\subsection{Metric dimension}

Since $\D_\bullet$, $\bullet=1,2$,
squares (up to matrices) to the $d$-dimensional harmonic oscillator
Hamiltonian, we already now that $(1+\D_\bullet^2)^{-d}$ belongs to the
Dixmier ideal $\mathcal L^{1,\infty}(\mathcal H)$. In this subsection
we are going to prove that for the localized operators, the critical
dimension is reduced by a factor of $2$, that is for all
$f\in\A_\star$, the operators $L_\star(f)(1+\D_\bullet^2)^{-d/2}$ belong
to $\mathcal L^{1,\infty}(\mathcal H)$ and that any of its Dixmier traces
is a constant multiple of the integral of $f$.  To obtain both Dixmier
traceability and the value of the Dixmier trace, we will use the results of
\cite{CGRS1}.  In order to do this, we need some
preliminary Lemmas (which will also be needed to check the regularity
condition, to obtain the dimension spectrum and to compute the
spectral action).

\begin{Lemma}
\label{nabla-com}
Introducing the operators on $L^2(\mathbb R^d)$:
\begin{align*}
\nabla_\mu := 
\partial_\mu +\tfrac{1}{2}i\,\tilde{\Omega}^2\, 
\Theta_{\mu\nu} \,x^\nu\;,
\qquad
\tilde{\nabla}_\mu := \tfrac{1}{2}
\big(\partial_\mu - 2i \,(\Theta^{-1})_{\mu\nu}\, x^\nu\big)\;,
\qquad \mu=1,\cdots,d\;,
\end{align*}
we have the following relations for $f\in \mathcal B_\star$:
$$
[H,L_\star (f)] = -
L_\star\big((g^{-1})^{\mu\nu} \partial_\mu \partial_\nu f\big) 
- 2  L_\star(\partial^\mu f) \nabla_\mu\;,
$$
$$
[\nabla^{\mu},L_\star(f)] = L_\star\big((g^{-1})^{\mu\nu}\partial_\nu
f\big)\;, 
\qquad [\tilde{\nabla}^\mu,L_\star(f)] = L_\star(\partial_\mu f)\;, 
$$
$$
[H,\nabla_\mu] = -2i \tilde{\Omega}^2 \Theta_{\mu\nu} 
\tilde{\nabla}^\nu \;, \qquad
[H,\tilde{\nabla}_\mu] = 2i (\Theta^{-1})_{\mu\nu} \nabla^\nu \;, 
$$
$$
[\tilde{\nabla}_\mu,\tilde{\nabla}_\nu]  = i(\Theta^{-1})_{\mu\nu} \;, 
\qquad [\nabla_\mu ,\nabla_\nu] =
- i\tilde{\Omega}^2 \Theta_{\mu\nu} \;,
\qquad
[\nabla^\mu  ,\tilde{\nabla}_\nu] = 
i (g^{-1})^{\mu\rho} (\Theta^{-1})_{\rho\nu} \;.
$$
\end{Lemma}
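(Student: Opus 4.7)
The plan is to verify each identity by direct computation on the common dense domain $\mathcal{S}(\mathbb{R}^d)$, using two elementary building blocks. The first is the Leibniz rule (\ref{leibniz}), which translates into the operator identity $[\partial_\mu,L_\star(f)]=L_\star(\partial_\mu f)$. The second I would extract from (\ref{identities}): adding the two relations there gives $x^\mu\star f=x^\mu f+\tfrac{1}{2}i\Theta^{\mu\nu}\partial_\nu f$, hence $L_\star(x^\mu)=x^\mu+\tfrac{1}{2}i\Theta^{\mu\nu}\partial_\nu$ as operators, and using $[L_\star(x^\mu),L_\star(f)]=iL_\star(\Theta^{\mu\nu}\partial_\nu f)$ one obtains
\begin{equation*}
[x^\mu,L_\star(f)]=\tfrac{1}{2}i\Theta^{\mu\nu}L_\star(\partial_\nu f),
\end{equation*}
where $x^\mu$ on the left denotes multiplication. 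All eight assertions of the lemma then reduce algebraically to these two commutators, the canonical $[\partial_\mu,x^\nu]=\delta_\mu^\nu$, and the defining identity $(g^{-1})^{\mu\nu}=\delta^{\mu\nu}-\tfrac{1}{4}\tilde\Omega^2(\Theta^2)^{\mu\nu}$.

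For the three module commutators I would insert the definitions of $\nabla^\mu$, $\tilde\nabla^\mu$ and $H$ and reassemble. In $[\nabla^\mu,L_\star(f)]$ the $x$-part contributes $-\tfrac{1}{4}\tilde\Omega^2(\Theta^2)^{\mu\rho}L_\star(\partial_\rho f)$, which combines with the $\partial$-part to produce exactly $L_\star((g^{-1})^{\mu\rho}\partial_\rho f)$. For $\tilde\nabla^\mu$ the prefactor $\Theta^{-1}$ cancels against $\Theta$, collapsing the expression to $L_\star(\partial_\mu f)$. For $[H,L_\star(f)]$ I would expand $H=-\partial_\mu\partial^\mu+\tilde\Omega^2 x_\mu x^\mu$ and apply the derivation rule: the $-\partial^2$ piece yields $-L_\star(\partial_\mu\partial^\mu f)-2L_\star(\partial^\mu f)\partial_\mu$, while the $x^2$ piece yields a symmetric second-derivative contribution $\tfrac14\tilde\Omega^2(\Theta^2)^{\mu\nu}L_\star(\partial_\mu\partial_\nu f)$ together with a first-order term $i\tilde\Omega^2\Theta^{\mu\nu}L_\star(\partial_\nu f)x_\mu$. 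The second-derivative parts fuse via $(g^{-1})^{\mu\nu}=\delta^{\mu\nu}-\tfrac14\tilde\Omega^2(\Theta^2)^{\mu\nu}$, and the first-order parts fuse into $-2L_\star(\partial^\mu f)\nabla_\mu$ by the very definition of $\nabla_\mu$.

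The remaining six identities are purely computations among first-order differential operators on $L^2(\mathbb{R}^d)$: each of $[\nabla_\mu,\nabla_\nu]$, $[\tilde\nabla_\mu,\tilde\nabla_\nu]$, $[\nabla^\mu,\tilde\nabla_\nu]$ reduces by bilinearity to the canonical $[\partial_\mu,x^\nu]=\delta_\mu^\nu$, with signs controlled by the antisymmetry of $\Theta$; the cross bracket $[\nabla^\mu,\tilde\nabla_\nu]$ is the one that produces $(g^{-1})^{\mu\rho}(\Theta^{-1})_{\rho\nu}$, after collecting a $\delta-\tfrac14\tilde\Omega^2\Theta^2$ block. Finally, the two relations $[H,\nabla_\mu]$ and $[H,\tilde\nabla_\mu]$ follow by writing $H$ as a quadratic polynomial in $\partial$ and $x$ and applying the already-computed brackets, using that $g$ and $\Theta$ commute.

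No step will be conceptually difficult; the only real obstacle is careful index bookkeeping---in particular tracking raised versus lowered indices (all contracted through the Euclidean $\delta$), the minus signs from $\Theta^t=-\Theta$ in contractions such as $\Theta^{\mu\nu}\Theta_{\mu\rho}=-(\Theta^2)^{\nu\rho}$, and the systematic repackaging of $\delta-\tfrac14\tilde\Omega^2\Theta^2$ blocks into the effective metric $g^{-1}$.
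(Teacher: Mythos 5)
Your proposal is correct and is exactly the computation the paper compresses into its one-line proof (``this follows from \eqref{H-x}, \eqref{leibniz} and \eqref{identities}''): the operator identities $[\partial_\mu,L_\star(f)]=L_\star(\partial_\mu f)$ and $[x^\mu,L_\star(f)]=\tfrac{i}{2}\Theta^{\mu\nu}L_\star(\partial_\nu f)$ derived from the Leibniz rule and the Moyal identities, combined with $[\partial_\mu,x^\nu]=\delta_\mu^\nu$ and the repackaging $\delta-\tfrac14\tilde\Omega^2\Theta^2=g^{-1}$. All the individual steps you describe check out, so this is a faithful (and more detailed) rendering of the paper's argument.
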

\begin{proof}
This  follows from the relations \eqref{H-x}, \eqref{leibniz} and 
\eqref{identities}.
\end{proof}

\begin{Corollary}
\label{nabla-com2}
Let $P_\alpha(\hat{\nabla})$ be an element of order $\alpha$ of the
polynomial algebra generated by $\nabla,\tilde{\nabla}$.
Then
$P_\alpha(\hat{\nabla})(1+H)^{-\alpha/2}$ extends to a bounded 
operator.
\end{Corollary}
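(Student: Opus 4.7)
The plan is to reduce the statement to a bound for monomials in the harmonic-oscillator creation/annihilation operators $a_\mu,a_\mu^*$, for which the discrete spectrum of $H$ allows a direct eigenspace estimate. Substituting $\partial_\mu = \tfrac{1}{2}(a_\mu - a_\mu^*)$ and $\tilde\Omega x_\mu = \tfrac{1}{2}(a_\mu + a_\mu^*)$ into the definitions of $\nabla_\mu$ and $\tilde\nabla_\mu$, both become constant-coefficient linear combinations of $\{a_\nu, a_\nu^*\}$. Consequently every $P_\alpha(\hat\nabla)$ expands as a finite sum of monomials $M_\alpha = X_1 X_2 \cdots X_\alpha$ with each $X_i \in \{a_\mu, a_\mu^*\}_{\mu=1}^d$, and by the triangle inequality it suffices to bound $M_\alpha(1+H)^{-\alpha/2}$ for every such monomial.

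For the key estimate, from $H = \tfrac{1}{2}\{a^\mu, a_\mu^*\}$ in \eqref{H-x} one obtains $\sum_\mu\bigl(\|a_\mu\psi\|^2 + \|a_\mu^*\psi\|^2\bigr) = 2\|H^{1/2}\psi\|^2$ for $\psi\in\mathcal H_\infty$. The intertwining relations $[H, a_\mu]\propto a_\mu$ and $[H, a_\mu^*]\propto a_\mu^*$ imply that $a_\mu$ and $a_\mu^*$ map the $n$-th eigenspace $V_n$ of $H$ (with eigenvalue $\lambda_n$, growing linearly in $n$) into $V_{n-1}$ and $V_{n+1}$ respectively. The Sobolev-type bound above then specialises to $\|X\psi\| \le \sqrt{2\lambda_n}\|\psi\|$ for $X \in \{a_\mu, a_\mu^*\}$ and $\psi \in V_n$. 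Iterating $\alpha$ times, while noting that the eigenvalue shifts by a bounded amount at each step, yields
\[
\|M_\alpha\psi\|\le C_\alpha (1+\lambda_n)^{\alpha/2}\|\psi\|, \qquad \psi\in V_n,
\]
with $C_\alpha$ independent of $n$.

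To conclude, decompose an arbitrary $\psi\in\mathcal H$ as $\psi = \sum_n\psi_n$ with $\psi_n\in V_n$, so that $M_\alpha(1+H)^{-\alpha/2}\psi = \sum_n(1+\lambda_n)^{-\alpha/2}M_\alpha\psi_n$. Each $M_\alpha\psi_n$ lies in $V_{n-\alpha}\oplus V_{n-\alpha+2}\oplus\cdots\oplus V_{n+\alpha}$, so images from distinct $\psi_n, \psi_{n'}$ can share a common $V_m$ only when $|n-n'|\le 2\alpha$; in particular, at most $2\alpha+1$ terms of the sum contribute to each eigenspace of $H$. The only real (and mild) obstacle is this non-orthogonality, and it is resolved by a direct Cauchy--Schwarz step combined with the pointwise bound of the previous paragraph, yielding $\|M_\alpha(1+H)^{-\alpha/2}\psi\|^2 \le C_\alpha'\|\psi\|^2$ and hence the claim.
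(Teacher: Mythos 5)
Your proof is correct, but it proceeds quite differently from the paper's. The paper first shows boundedness of the single-derivative case $\hat{\nabla}_\mu(1+H)^{-1/2}$ via the operator inequalities $-\partial_\mu(1+H)^{-1}\partial_\mu \leq -\partial_\mu(1-\Delta)^{-1}\partial_\mu$ and $x_\mu(1+H)^{-1}x_\mu \leq x_\mu(1+\tilde{\Omega}^2 x_\nu x^\nu)^{-1}x_\mu$, and then runs an induction in which $(1+H)^{-1}$ is commuted past the $\hat{\nabla}$'s; the error terms $[\hat{\nabla}_\mu,H]$ are again first-order in $\hat{\nabla}$ by Lemma \ref{nabla-com}, so the induction closes. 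You instead diagonalize: you rewrite $\nabla,\tilde{\nabla}$ as constant-coefficient combinations of the ladder operators $a_\mu,a_\mu^*$, use $\sum_\mu(\|a_\mu\psi\|^2+\|a_\mu^*\psi\|^2)=2\|H^{1/2}\psi\|^2$ together with the shift $V_n\to V_{n\pm1}$ to get the eigenspace-level bound $\|M_\alpha\psi\|\leq C_\alpha(1+\lambda_n)^{\alpha/2}\|\psi\|$, and then sum with an almost-orthogonality argument (fixing the shift $j\in\{-\alpha,\dots,\alpha\}$ makes the contributions orthogonal in $n$, and there are only $2\alpha+1$ values of $j$). Both arguments are sound. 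The paper's version is shorter because it recycles the commutator table of Lemma \ref{nabla-com}, and it transfers verbatim to the generalized connections $\nabla^a_\mu$ needed in Corollary \ref{bounds6}; your version is more self-contained and gives an explicit quantitative bound on each eigenspace (and would also cover $\nabla^a_\mu$, since those are still linear in $a_\mu,a_\mu^*$). Two small points you should make explicit: monomials of degree $\beta<\alpha$ are handled by splitting $(1+H)^{-\alpha/2}=(1+H)^{-\beta/2}(1+H)^{-(\alpha-\beta)/2}$ and discarding the bounded second factor; and the "Cauchy--Schwarz step" at the end is cleanest if phrased as the decomposition $M_\alpha(1+H)^{-\alpha/2}\psi=\sum_{j}\sum_n(1+\lambda_n)^{-\alpha/2}P_{n+j}M_\alpha\psi_n$ with $P_m$ the spectral projection onto $V_m$, orthogonality in $n$ for fixed $j$, and a triangle inequality over the finitely many $j$.
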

\begin{proof}
From the operator inequalities (no summations on $\mu$ but summation on $\nu$)
$$
|(1+H)^{-1/2}\partial_\mu|^2=-\partial_\mu(1+H)^{-1}\partial_\mu
\leq -\partial_\mu(1-\partial_\nu\partial^\nu)^{-1}\partial_\mu\;,
$$
and 
$$
|(1+H)^{-1/2}x_\mu|^2=x_\mu(1+H)^{-1}x_\mu\leq
x_\mu(1+\tilde{\Omega}^2 x_\nu x^\nu)^{-1}x_\mu\;,
$$
we see that $\hat{\nabla}_\mu(1+H)^{-1/2}$ is bounded. Then, the general
case follows by induction using
$$
\hat{\nabla}_{\mu_1}\hat{\nabla}_{\mu_2}(1+H)^{-1}
=\hat{\nabla}_{\mu_1}(1+H)^{-1} \hat{\nabla}_{\mu_2}+
\hat{\nabla}_{\mu_1}[\hat{\nabla}_{\mu_2},(1+H)^{-1}]\;,
$$
and
$$
\hat{\nabla}_{\mu_1}[\hat{\nabla}_{\mu_2},(1+H)^{-1}]
=-\hat{\nabla}_{\mu_1}(1+H)^{-1}[\hat{\nabla}_{\mu_2},H](1+H)^{-1}\;,
$$
which is bounded, too, according to Lemma \ref{nabla-com}.
\end{proof}

The following Proposition will be crucial for the computation of the
spectral action, the estimate we need to evaluate the Dixmier trace
and to compute the dimension spectrum and the residues of the
associated zeta functions.
\begin{Proposition}
\label{Proposition-trace}
For $f\in \mathcal B_\star$, define $\mathcal{T}_{\mu_1\dots\mu_k} (f) 
:= \mathrm{Tr} \big( L_\star(f) \nabla_{\mu_1}\dots\nabla_{\mu_k}
e^{-tH}\big)$. Then  one has
\begin{align*}
\mathcal{T}_{\mu_1,\dots \mu_k}(f)
&=\sum_{1\leq j_1 <j_2 < \dots <j_{2a} \leq k} 
\Big(\frac{\tilde{\Omega}}{2\pi\sinh(2\tilde{\Omega}
t)}\Big)^{\frac{d}{2}}  
\nonumber
\\
&\quad\times \int_{\mathbb{R}^d} dz\,\sqrt{\det g}\; f(z)\,
e^{-\tilde{\Omega}\tanh(\tilde{\Omega}t) \langle z,gz\rangle}\,
(\mathcal{Z}_{\mu_1}
\stackrel{j_1\dots j_{2a}}{\check{\dots}} \mathcal{Z}_{\mu_k})
(\mathcal{N}_{\mu_{j_1}\mu_{j_2}} \cdots 
\mathcal{N}_{\mu_{j_{2a-1}}\mu_{j_{2a}}}) \;,
\end{align*}
where
\begin{align*}
\mathcal{Z}_\mu &:=  - \tilde{\Omega} \tanh(\tilde{\Omega}t) z_\mu 
+ i\tilde{\Omega}^2 (\Theta gz)_\mu\;,
\\
\mathcal{N}_{\mu\nu} &:=-
\tfrac12\tilde{\Omega}\,(\coth(\tilde{\Omega}t)+\tanh(\tilde{\Omega}t))
(g^{-1})_{\mu\nu} 
-\tfrac12\tilde{\Omega}^3\,\coth(\tilde{\Omega}t) 
(\Theta g\Theta)_{\mu\nu}
-\tfrac12i\,\tilde{\Omega}^2\,  \Theta_{\mu\nu}\;,
\end{align*}
and $
\stackrel{j_1\dots j_{2a}}{\check{\dots}}$
means that $\{\mathcal{Z}_{\mu_{j_1}},\dots
\mathcal{Z}_{\mu_{j_{2a}}}\}$ are missing in the product 
$\mathcal{Z}_{\mu_1} \cdots \mathcal{Z}_{\mu_k}$.
(Remember that $g$ is a constant metric so that $\sqrt{\det g}$ can
also be taken in front of the integral).

In particular,
\[
\mathrm{Tr} 
\big( e^{-tH}\big)
=\big(2 \sinh (\tilde{\Omega} t)\big)^{-d}\;.
\]
\end{Proposition}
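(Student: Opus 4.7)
The plan is to reduce the computation to explicit Gaussian integrals using Mehler's heat kernel for the $d$-dimensional harmonic oscillator. For the concluding trace formula ($k=0$), the Schwartz kernel of $e^{-tH}$ reads
\begin{equation*}
K_t(x,y) = \Big(\tfrac{\tilde\Omega}{2\pi\sinh(2\tilde\Omega t)}\Big)^{d/2}\exp\Bigl(-\tfrac{\tilde\Omega}{2\sinh(2\tilde\Omega t)}\bigl[(|x|^2+|y|^2)\cosh(2\tilde\Omega t) - 2\langle x,y\rangle\bigr]\Bigr),
\end{equation*}
so that setting $y=x$ (which turns the bracket into $4\sinh^2(\tilde\Omega t)|x|^2$ after using $(\cosh(2s)-1)/\sinh(2s)=\tanh s$) and performing the resulting Gaussian integral over $\mathbb R^d$ yields $(2\sinh(\tilde\Omega t))^{-d}$, as claimed.

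For the general formula, I would first compute the kernel of $\nabla_{\mu_1}\cdots\nabla_{\mu_k}e^{-tH}$ by repeatedly applying $\nabla^x_\mu = \partial^x_\mu + \tfrac{i}{2}\tilde\Omega^2\Theta_{\mu\nu}x^\nu$ to the left argument of $K_t$. Each application brings down a linear factor in $(x,y)$, so the kernel takes the form $P_{\mu_1\cdots\mu_k}(x,y)\,K_t(x,y)$ for a polynomial of total degree $k$. The trace equals $\int dx\,[L_\star(f)(\nabla_{\mu_1}\cdots\nabla_{\mu_k}e^{-tH})](x,x)$, and $L_\star(f)$ is evaluated using the Moyal formula (\ref{Moyal}): the auxiliary $y$- and $k$-integrations there are Gaussian because the integrand depends Gaussian-times-polynomially on those variables, and the Moyal half-shift $x+\tfrac12\Theta k$ inside $f$ is precisely what produces the effective metric $g = (\mathrm{Id}-\tfrac14\tilde\Omega^2\Theta^2)^{-1}$ upon completing the square. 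Setting the external arguments equal and integrating along the diagonal amounts to one further Gaussian integration, generating both the $\sqrt{\det g}$ prefactor and the exponential $e^{-\tilde\Omega\tanh(\tilde\Omega t)\langle z,gz\rangle}$.

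To control the polynomial combinatorics efficiently, I propose using the generating function
\begin{equation*}
\Phi(J) := \mathrm{Tr}\bigl(L_\star(f)\,e^{J^\mu\nabla_\mu}\,e^{-tH}\bigr), \qquad \mathcal{T}_{\mu_1\cdots\mu_k}(f) = \partial_{J^{\mu_1}}\!\cdots\partial_{J^{\mu_k}}\Phi(J)\big|_{J=0},
\end{equation*}
which is well-defined because $[\nabla_\mu,\nabla_\nu]=-i\tilde\Omega^2\Theta_{\mu\nu}$ is central, so that $e^{J\cdot\nabla}$ is a genuine displacement operator and $e^{J\cdot\nabla}e^{-tH}$ can be manipulated via Baker--Campbell--Hausdorff. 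Since $\Phi(J)$ then depends Gaussianly on $J$, its Taylor expansion is a Wick sum over perfect matchings of the paired indices, which directly reproduces the announced sum over subsets $\{j_1<\dots<j_{2a}\}\subset\{1,\dots,k\}$: the unpaired $J^\mu$'s yield the linear $\mathcal{Z}_\mu$ factors while paired variables produce the $\mathcal{N}_{\mu\nu}$ contractions.

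The principal obstacle is algebraic bookkeeping: the three summands in $\mathcal{N}_{\mu\nu}$ (the $(g^{-1})_{\mu\nu}$ piece from the $\partial$-part of $\nabla$, the $(\Theta g\Theta)_{\mu\nu}$ piece from its $\tilde\Omega^2\Theta x$-part, and the antisymmetric $\Theta_{\mu\nu}$ piece from the Moyal half-shift) and the two summands in $\mathcal{Z}_\mu$ emerge only after careful simultaneous tracking of all $\Theta$- and $\tilde\Omega$-couplings, using the commutation identities of Lemma \ref{nabla-com} to rearrange the operators so that a single clean completion of the square is possible in both Gaussian integrations.
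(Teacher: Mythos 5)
Your overall strategy (Mehler kernel on the diagonal, explicit kernel of $L_\star(f)$, Gaussian integration with sources, Wick-type combinatorics) is the same as the paper's, and your treatment of the $k=0$ case is correct. However, the central device you propose for $k\geq 1$ contains a genuine error. The identity
$\mathcal{T}_{\mu_1\cdots\mu_k}(f)=\partial_{J^{\mu_1}}\cdots\partial_{J^{\mu_k}}\,
\mathrm{Tr}\bigl(L_\star(f)\,e^{J\cdot\nabla}\,e^{-tH}\bigr)\big|_{J=0}$
is false, because the $\nabla_\mu$ do not commute: differentiating $e^{J\cdot\nabla}$ produces the \emph{symmetrized} product $\tfrac{1}{k!}\sum_{\sigma}\nabla_{\mu_{\sigma(1)}}\cdots\nabla_{\mu_{\sigma(k)}}$, not the ordered product appearing in the definition of $\mathcal{T}_{\mu_1\cdots\mu_k}$. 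Correspondingly, since your $\Phi(J)$ is a Gaussian in $J$, its Wick expansion can only yield a pair contraction that is \emph{symmetric} in the two contracted indices, whereas the $\mathcal{N}_{\mu\nu}$ of the Proposition has the genuinely antisymmetric component $-\tfrac{i}{2}\tilde\Omega^2\Theta_{\mu\nu}$. That component is exactly the ordering correction $\tfrac12[\nabla_\mu,\nabla_\nu]=-\tfrac{i}{2}\tilde\Omega^2\Theta_{\mu\nu}$ (it arises when $\partial_\mu$ hits the $x$-linear part of an adjacent $\nabla_\nu$), and it is not, as you assert, produced by the Moyal half-shift; the half-shift is responsible for the effective metric $g$ and for the $i\tilde\Omega^2(\Theta gz)_\mu$ term in $\mathcal{Z}_\mu$. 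This is not a cosmetic issue: in the spectral action the antisymmetric part of $\mathcal{N}_{\mu\nu}$ contracts nontrivially with Moyal commutators such as $A^\mu\star A^\nu-A^\nu\star A^\mu$.

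The gap is repairable in two ways. Either you keep your generating function and add back, for every contracted pair, the central reordering term $\tfrac12[\nabla_{\mu_{j}},\nabla_{\mu_{j'}}]$ needed to pass from the Weyl-symmetrized product to the ordered one --- these corrections assemble precisely into the antisymmetric part of $\mathcal{N}_{\mu\nu}$ --- or you follow the paper and apply the ordered differential operators $\partial_{y^{\mu_i}}+\tfrac{i}{2}\tilde\Omega^2\Theta_{\mu_i\nu}y^\nu$ directly to the Mehler kernel before tracing, introducing auxiliary sources $\xi,\eta$ only inside the $(u,v)=(x-y,x+y)$ Gaussian integral; there the noncommutativity is automatically accounted for and the contraction $\mathcal{Y}_{\mu\nu}$ (hence $\mathcal{N}_{\mu\nu}$) acquires its antisymmetric part from the outset. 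You should also justify convergence and the kernel-on-the-diagonal evaluation of the trace, which the paper does via Corollary \ref{nabla-com2} (boundedness of $P_\alpha(\hat\nabla)(1+H)^{-\alpha/2}$) together with the trace-class property of $e^{-tH/2}$.
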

\begin{proof}
Since  $L_\star(f) \nabla_{\mu_1}\dots
\nabla_{\mu_k}e^{-tH}$ is trace-class 
(because  $\nabla_{\mu_1}\dots\nabla_{\mu_k}
e^{-tH/2}$ is bounded by Corollary \ref{nabla-com2} and 
$e^{-tH/2}$ is trace-class),
the trace can be evaluated as the integral of the kernel on the diagonal.
Thus, in integral kernel representation, we have to compute

\begin{align*}
&\mathcal{T}_{\mu_1,\dots \mu_k}(f)=
\\
&
\int_{\mathbb{R}^d\times \mathbb{R}^d} dx \,dy\; 
(L_\star(f))(x,y) 
\Big(\frac{\partial}{\partial y^{\mu_1}}
+ \frac{i}{2}\tilde{\Omega}^2\Theta_{\mu_1\nu_1} y^{\nu_1}\Big)
\cdots 
\Big(\frac{\partial}{\partial y^{\mu_k}}
+ \frac{i}{2}\tilde{\Omega}^2\Theta_{\mu_k\nu_k} y^{\nu_k}\Big)
\big( e^{-tH}(y,x) \big)\;.
\end{align*}
The operator kernel of $e^{-t H}$ is the Mehler kernel
\begin{align}
\label{Mehler-n}
e^{-t H}(x,y) =  
\Big(\frac{\tilde\Omega}{2\pi\sinh(2\tilde\Omega t)}
\Big)^{d/2}
e^{-\frac{\tilde\Omega}{4} \coth(\tilde\Omega t) \|x-y\|^2 
-\frac{\tilde\Omega}{4}\tanh(\tilde\Omega t) \|x+y\|^2 }\;,
\end{align}
while  the operator kernel of $L_\star(f)$  is readily identified to be
\begin{align}
\label{Lstarf}
L_\star (f)(x,y)=\frac{1}{\pi^d \det \Theta} \int d z  \,f(z)
\,\mathrm{e}^{i\langle x-y,\Theta^{-1} (x+y)\rangle
+2i\langle z,\Theta^{-1} (x-y)\rangle}\;.
\end{align}
We introduce $u=x-y$ and $v=x+y$ and 
\begin{align*}
\mathfrak{D}_\mu(u,v) &:=  \frac{\tilde{\Omega}}{2}\coth(\tilde{\Omega}t)
u_\mu - \frac{\tilde{\Omega}}{2}\tanh(\tilde{\Omega}t)v_\mu 
+\frac{i}{4}\tilde{\Omega}^2 \Theta_{\mu\alpha} (v^\alpha-u^\alpha)\;,
\\
\mathcal{Y}_{\mu\nu} & := -
\frac{\tilde{\Omega}}{2}(\coth(\tilde{\Omega}t)+\tanh(\tilde{\Omega}t))
\delta_{\mu\nu} -\frac{i}{2}\tilde{\Omega}^2 \Theta_{\mu\nu}\;,
\end{align*}
to obtain
\begin{align*}
&\mathcal{T}_{\mu_1,\dots \mu_k}(f)
\nonumber
\\*
&=\sum_{1\leq j_1 <j_2 < \dots <j_{2a} \leq k} 
\Big(\frac{\tilde{\Omega}}{2\pi\sinh(2\tilde{\Omega}
t)}\Big)^{\frac{d}{2}} \frac{1}{(2\pi)^d \det \Theta}
\nonumber
\\
& \times \int \!\!\! du\,dv\,dz\; f(z)
\mathfrak{D}_{\mu_1}(u,v)\stackrel{j_1\dots j_{2a}}{\check{\dots}} 
\mathfrak{D}_{\mu_k}(u,v) 
\mathcal{Y}_{\mu_{j_1}\mu_{j_2}} \cdots 
\mathcal{Y}_{\mu_{j_{2a-1}}\mu_{j_{2a}}} 
e^{-\frac{1}{2}\langle
(u,v),Q(u,v)\rangle-\langle(u,v),(2i\Theta^{-1}z,0)\rangle}
\nonumber
\\*
&=\sum_{1\leq j_1 <j_2 < \dots <j_{2a} \leq k} 
\Big(\frac{\tilde{\Omega}}{2\pi\sinh(2\tilde{\Omega}
t)}\Big)^{\frac{d}{2}}  \frac{1}{\det \Theta \sqrt{\det Q}}
\nonumber
\\
&\times \int_{\mathbb{R}^d} dz\; f(z)
\mathfrak{D}_{\mu_1}(\tfrac{i\partial}{\partial \xi},
\tfrac{i\partial}{\partial \eta})
\stackrel{j_1\dots j_{2a}}{\check{\dots}} 
\mathfrak{D}_{\mu_k}(\tfrac{i\partial}{\partial \xi},
\tfrac{i\partial}{\partial \eta}) 
\mathcal{Y}_{\mu_{j_1}\mu_{j_2}} \cdots 
\mathcal{Y}_{\mu_{j_{2a-1}}\mu_{j_{2a}}} \mathcal{E}\Big|_{\xi=\eta=0}\;,
\end{align*}
where $\mathcal{E} := 
e^{-\frac{1}{2}\langle(-2z\Theta^{-1}+\xi,\eta),Q^{-1} 
(2\Theta^{-1}z+\xi,\eta)\rangle}$ and 
$Q \in M_{2d}(\mathbb{C})$ is given by
$$
Q=
\begin{pmatrix}
\frac{\tilde\Omega}{2} \coth(\tilde\Omega t)\,{\rm Id}_d&-i\Theta^{-1}\\
i\Theta^{-1}&\frac{\tilde\Omega}{2} \tanh(\tilde\Omega t)\,{\rm Id}_d
\end{pmatrix}\;.
$$
Recalling that $g^{-1}=1-\frac{\tilde{\Omega}^2}{4}\Theta^2$
and $g\Theta=\Theta g$, we find by writing $Q$ as product of triangle
matrices 
$$
\det Q = \frac{1}{\det g\, (\det \Theta)^2}\,,\qquad
Q^{-1}=\begin{pmatrix}
-\frac{\tilde\Omega}{2} \tanh(\tilde\Omega t)\,g\,\Theta^2&-ig\,\Theta\\
ig\,\Theta&- \frac{\tilde\Omega}{2} \coth(\tilde\Omega t)\,g\,\Theta^2
\end{pmatrix}\;,
$$
so that
\begin{align*}
\mathcal{E} =
\exp\Big\{& -\tilde{\Omega} \tanh(\tilde\Omega t) \langle z,gz\rangle
- 
\tilde{\Omega} \tanh(\tilde\Omega t) \langle z , g \Theta \xi \rangle  
- 2 i \langle z , g \eta\rangle 
\\*
&
 + \frac{\tilde{\Omega}}{4} \tanh(\tilde\Omega t) 
\langle \xi , \Theta g \Theta \xi \rangle 
+ i \langle \xi , \Theta g \eta \rangle 
+\frac{\tilde{\Omega}}{4} \coth(\tilde{\Omega}t)  
\langle \eta, \Theta g \Theta \eta \rangle \Big\}\;,
\\
\mathfrak{D}_\mu(\tfrac{i\partial}{\partial \xi},
\tfrac{i\partial}{\partial \eta}) \, \mathcal{E}
&=  \Big(
i \tilde{\Omega}^2 (\Theta g z)_\mu 
- \tilde{\Omega}\tanh(\tilde{\Omega}t)  z_\mu
+ i\frac{\tilde{\Omega}^2}{2} (\Theta g\Theta \xi)_\mu
\\
&\qquad
- \tilde{\Omega} \coth(\tilde{\Omega}t) (g\Theta \eta)_\mu  
+ \frac{\tilde{\Omega}}{2}\coth(\tilde{\Omega}t) (\Theta \eta)_\mu  
- \frac{\tilde{\Omega}}{2}\tanh(\tilde{\Omega}t) (\Theta \xi)_\mu 
\Big) \mathcal{E}\;.
\end{align*}
Then, the functions 
\begin{align*}
\mathcal{Z}_\mu &:=  \mathcal{E}^{-1} 
\mathfrak{D}_\mu(\tfrac{i\partial}{\partial \xi},
\tfrac{i\partial}{\partial \eta}) \, \mathcal{E}\Big|_{\xi=\eta=0}\;,
\\
\mathcal{N}_{\mu\nu} &:= 
\mathcal{Y}_{\mu\nu}+ \mathfrak{D}_\mu(\tfrac{i\partial}{\partial \xi},
\tfrac{i\partial}{\partial \eta}) \, \big(\mathcal{E}^{-1}
\mathfrak{D}_\nu(\tfrac{i\partial}{\partial \xi},
\tfrac{i\partial}{\partial \eta}) \, \mathcal{E}\big)\;,
\end{align*}
take the values given in the Lemma, and the assertion follows.
\end{proof}

A very nice feature of the results of \cite{CGRS1} is that 
both the questions of the Dixmier traceability and of the value
of the Dixmier trace of an operator of the form $aG^k$ are reduced 
to the value of the Hilbert-Schmidt norm of the heat-type 
operator $ae^{-tG^{-1}}$. In
our context $a=L_\star(f)$, $G=(1+\mathcal D_\bullet^2)^{-1}$, and all we
need to do is to evaluate the Hilbert-Schmidt norm of
$L_\star(f)e^{-t\D_\bullet^2}$.
\begin{Lemma}
\label{bounds0} 
If  $f\in\mathcal A_\star$, then we
have:
$$
\|L_\star(f)e^{-t\mathcal D_\bullet^2}\|_2^2
= \frac{\tilde{\Omega}^{d/2}}{
\pi^{d/2}  \tanh^{d/2}(2\tilde{\Omega} t)} 
\int d z\, \sqrt{\det g} \;\bar f\star f(z)
e^{-\tilde{\Omega} \tanh(2\tilde{\Omega}t) \langle z,g z\rangle}\;.
$$
\end{Lemma}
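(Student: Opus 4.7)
The idea is to convert the Hilbert--Schmidt norm into a trace that Proposition~\ref{Proposition-trace} can evaluate, after isolating the matrix-fibre part. First, using self-adjointness of $e^{-t\mathcal D_\bullet^2}$, $L_\star(f)^*=L_\star(\bar f)$ and the fact that $L_\star$ is a $*$-homomorphism, I would write
\begin{align*}
\|L_\star(f)e^{-t\mathcal D_\bullet^2}\|_2^2
= \mathrm{Tr}\bigl(e^{-t\mathcal D_\bullet^2}L_\star(\bar f)L_\star(f)e^{-t\mathcal D_\bullet^2}\bigr)
= \mathrm{Tr}\bigl(L_\star(\bar f\star f)\,e^{-2t\mathcal D_\bullet^2}\bigr),
\end{align*}
where cyclicity is licit because $e^{-t\mathcal D_\bullet^2}$ is already trace-class (by the spectrum computed after \eqref{H-x}) and $L_\star(\bar f\star f)$ is bounded (since $\bar f\star f\in\mathcal A_\star$, indeed Schwartz).

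Next, I would exploit the factorization $\mathcal D_\bullet^2=H\otimes 1-(-1)^\bullet\tilde\Omega\otimes\Sigma$ from \eqref{H-x} to split $e^{-2t\mathcal D_\bullet^2}=e^{-2tH}\otimes e^{2(-1)^\bullet t\tilde\Omega\,\Sigma}$. Because $L_\star(\bar f\star f)$ acts trivially on $\bigwedge(\mathbb C^d)$, the trace factors as
\begin{align*}
\|L_\star(f)e^{-t\mathcal D_\bullet^2}\|_2^2
= \mathrm{Tr}_{L^2}\bigl(L_\star(\bar f\star f)\,e^{-2tH}\bigr)\cdot
\mathrm{Tr}_{\bigwedge\mathbb C^d}\bigl(e^{2(-1)^\bullet t\tilde\Omega\,\Sigma}\bigr).
\end{align*}
Writing $\Sigma=[b_\mu^*,b^\mu]=2N_f-d$ with $N_f$ having eigenvalues $k\in\{0,\dots,d\}$ of multiplicity $\binom{d}{k}$, a one-line binomial computation gives
$\mathrm{Tr}_{\bigwedge\mathbb C^d}\bigl(e^{2(-1)^\bullet t\tilde\Omega\Sigma}\bigr)=\bigl(2\cosh(2\tilde\Omega t)\bigr)^d$, independent of $\bullet$; this $\bullet$-independence is essentially the only conceptual point to check and explains why the final formula is common to both spectral triples.

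For the spatial factor I would simply invoke Proposition~\ref{Proposition-trace} with $k=0$, after replacing $t$ by $2t$ and $f$ by $\bar f\star f\in\mathcal B_\star$, obtaining
\[
\mathrm{Tr}_{L^2}\bigl(L_\star(\bar f\star f)\,e^{-2tH}\bigr)
=\Bigl(\tfrac{\tilde\Omega}{2\pi\sinh(4\tilde\Omega t)}\Bigr)^{d/2}
\int dz\,\sqrt{\det g}\,(\bar f\star f)(z)\,e^{-\tilde\Omega\tanh(2\tilde\Omega t)\langle z,gz\rangle}.
\]
Finally, using the duplication identity $\sinh(4\tilde\Omega t)=2\sinh(2\tilde\Omega t)\cosh(2\tilde\Omega t)$, the product of prefactors simplifies as
\[
\Bigl(\tfrac{\tilde\Omega}{2\pi\sinh(4\tilde\Omega t)}\Bigr)^{d/2}(2\cosh(2\tilde\Omega t))^d
=\frac{\tilde\Omega^{d/2}}{\pi^{d/2}\tanh^{d/2}(2\tilde\Omega t)},
\]
which is exactly the stated identity. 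No real obstacle is expected; the only two items requiring care are the verification of $\bullet$-independence of the fermionic trace and the reduction to Proposition~\ref{Proposition-trace} via cyclicity and the tensor factorization of $e^{-t\mathcal D_\bullet^2}$.
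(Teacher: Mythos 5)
Your proposal is correct and follows essentially the same route as the paper: factor $e^{-t\mathcal D_\bullet^2}$ through \eqref{H-x} into the tensor product $e^{-tH}\otimes e^{(-1)^\bullet t\tilde\Omega\Sigma}$, compute the fermionic trace $(2\cosh(2\tilde\Omega t))^d$ (which is indeed $\bullet$-independent because $\cosh$ is even), and reduce the spatial factor to $\mathrm{Tr}(L_\star(\bar f\star f)e^{-2tH})$ via $L_\star(f)^*=L_\star(\bar f)$ and cyclicity, then invoke Proposition~\ref{Proposition-trace}. The only cosmetic difference is that the paper splits the Hilbert--Schmidt norm over the tensor factors first and applies cyclicity only to the $L^2(\mathbb R^d)$ piece, whereas you apply cyclicity globally before factoring; the computations are identical.
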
 
\begin{proof}
Since $\D^2_\bullet=H\otimes 1-(-1)^\bullet\,\tilde\Omega\otimes \Sigma$, we
have
$$
0\leq e^{-t\mathcal D_\bullet^2}=e^{-tH}\otimes e^{(-1)^\bullet t \tilde\Omega\Sigma}\;,
$$
and thus
$$
\|L_\star(f)e^{-t\mathcal D_\bullet^2}\|_2=\|L_\star(f)e^{-tH}\|_2\,
{\rm tr}\big( e^{(-1)^\bullet 2t \tilde\Omega\Sigma}\big)^{1/2}\;.
$$
For the matrix trace, we have 
\begin{align*}
\mathrm{tr}(e^{(-1)^\bullet t\tilde{\Omega} \Sigma })
=  \mathrm{tr}(e^{(-1)^\bullet t\tilde{\Omega} \sum_{\mu=1}^d (b_\mu^* b_\mu - 
b_\mu b_\mu^*)})
=  \mathrm{tr}\Big( \prod_{\mu=1}^d e^{(-1)^\bullet t 
\tilde{\Omega} (b_\mu^* b_\mu - b_\mu b_\mu^*)}\Big)\;.
\end{align*}
In the basis $|s_1,\dots,s_d\rangle := 
(b_1^*)^{s_1}\cdots (b_d^*)^{s_d}|0,\cdots,0\rangle$ of
$\mathbb{C}^{2^d}$, with $s_i \in \{0,1\}$, we have 
$$
-(b_\mu^* b_\mu - 
b_\mu b_\mu^*)|s_1,\dots,s_d\rangle = (-1)^{s_\mu} 
|s_1,\dots,s_d\rangle\,,
$$ 
and therefore, for both $\bullet=1,2$, 
\begin{align*}
\mathrm{tr}(e^{(-1)^\bullet t \tilde{\Omega} \Sigma })
= 2^d\cosh^d (\tilde{\Omega}t)\;.
\end{align*}
The other bit, 
$\|L_\star(f)e^{-t H}\|_2^2
={\rm Tr}\big(e^{-tH}L_\star(\bar f\star f)e^{-tH}\big)
={\rm Tr}\big(L_\star(\bar f\star f)e^{-2tH}\big)$, has been computed
in Proposition~\ref{Proposition-trace}.
\end{proof}
\begin{Remark} 
{\rm Since for $f\in\A_\star$, $\bar f\star f$ 
is a priori {\it not a positive function},  in the  previous Lemma, 
one may wonder why $\int d z  \,\sqrt{\det g}\, \bar f\star f(z)
\exp\{-\tilde\Omega\tanh(2\tilde\Omega t) \langle z,gz\rangle\}$ 
is positive, as it should be. This follows from the following facts: 
For $A$ a positive definite matrix commuting with 
$\Theta$, set $g_A(x):=e^{-<x,Ax>}$. Then a computation gives 
$$
g_A\star g_A= (\det(1+\Theta^t A^2 \Theta))^{-1/2}\,g_B\;,
\quad
\mbox{with}\quad
B=\frac{2A}{1+\Theta^t A^2 \Theta}\;.
$$
It follows
$$
\exp\{-\tilde\Omega\tanh(2\tilde\Omega t) 
\langle z,g z\rangle\}=(\det(1+\Theta^t A^2 \Theta))^{1/2}\,g_A\star g_A\;,
$$
for 
$$
A=\frac{g^{-1}-\big(g^{-2}-\tilde\Omega^2\,\tanh(2\tilde\Omega t)^2
\Theta^t\Theta\big)^{1/2}}{
\tilde\Omega\tanh(2\tilde\Omega t)\,\Theta^t\Theta}\;.
$$
Note that $g^{-2}-\tilde\Omega^2\,\Theta^t\Theta
= (1-\tilde\Omega^2\,\Theta^t\Theta/4)^2$ so that $A$ exists for all $t$.
Using the traciality of the Moyal product \eqref{tracial}, we then get
for the matrix $A$ given above and up to a  positive constant:
\begin{align*}
\int d z \, \bar f\star f(z)
\exp\{-\tilde\Omega\tanh(2\tilde\Omega t) \langle z,g^{-1}z\rangle\}&=
C(\Theta,\tilde\Omega,t)\,\int d z \, \bar f\star f(z)\,g_A\star g_A(z)\\
&=
C(\Theta,\tilde\Omega,t)\,\int d z \, \bar f\star f \star g_A\star g_A(z)\\
&=
C(\Theta,\tilde\Omega,t)\,\int d z \, \overline{ f\star g_A}\star f \star g_A(z)\\
&=
C(\Theta,\tilde\Omega,t)\,\int d z \, \overline{ f\star g_A}(z)\, f \star g_A(z)\\
&=
C(\Theta,\tilde\Omega,t)\,\| f \star g_A\|_2\;\geq \;0\;.
\end{align*}
Moreover, it explains why $L_\star(f)e^{-t\mathcal D_\bullet^2}$ is
Hilbert-Schmidt also for $f$ in $\mathcal B_\star$, since $\A_\star$
is an ideal of $\mathcal B_\star$ and $\A_\star\subset L^2(\mathbb
R^d)$.}
\end{Remark}
\begin{Lemma}
\label{normalization}
For all $t>0$ and $\bullet=1,2$, we have
$$
{\rm Tr}\big(e^{-t\mathcal D_\bullet^2}\big)=\coth^d(\tilde\Omega t)\;.
$$
\end{Lemma}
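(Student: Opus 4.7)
The plan is to combine the tensor product decomposition of $\mathcal{D}_\bullet^2$ with the two trace computations that were already carried out in the preceding Proposition and Lemma; no new estimates are needed.

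First I would invoke the identity $\mathcal{D}_\bullet^2=H\otimes 1-(-1)^\bullet\,\tilde\Omega\otimes\Sigma$ from \eqref{H-x}. Since the two tensor factors commute and act on different parts of $\mathcal{H}=L^2(\mathbb{R}^d)\otimes\bigwedge(\mathbb{C}^d)$, the exponential factorizes:
\begin{equation*}
e^{-t\mathcal{D}_\bullet^2}=e^{-tH}\otimes e^{(-1)^\bullet t\tilde\Omega\,\Sigma}.
\end{equation*}
Corollary~\ref{nabla-com2} already tells us that $e^{-tH}$ is trace-class on $L^2(\mathbb{R}^d)$ (it is a fortiori bounded times a trace-class operator), while the matrix factor acts on the finite-dimensional space $\mathbb{C}^{2^d}$. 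Hence the full trace splits as a product.

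Next I would evaluate each factor. For the bosonic factor I would apply Proposition~\ref{Proposition-trace} in the special case $f\equiv 1$ and $k=0$, which gives
\begin{equation*}
\mathrm{Tr}\bigl(e^{-tH}\bigr)=\bigl(2\sinh(\tilde\Omega t)\bigr)^{-d}.
\end{equation*}
For the fermionic factor, the computation performed in the proof of Lemma~\ref{bounds0} (which diagonalizes $-(b_\mu^*b_\mu-b_\mu b_\mu^*)$ in the occupation-number basis $|s_1,\dots,s_d\rangle$) yields
\begin{equation*}
\mathrm{tr}\bigl(e^{(-1)^\bullet t\tilde\Omega\,\Sigma}\bigr)=2^d\cosh^d(\tilde\Omega t),
\end{equation*}
and I would simply reuse it verbatim, noting that both signs $\bullet=1,2$ give the same value because $\cosh$ is even.

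Multiplying the two factors,
\begin{equation*}
\mathrm{Tr}\bigl(e^{-t\mathcal{D}_\bullet^2}\bigr)=\frac{2^d\cosh^d(\tilde\Omega t)}{(2\sinh(\tilde\Omega t))^d}=\coth^d(\tilde\Omega t),
\end{equation*}
which is the claimed identity. There is no real obstacle here: both ingredients are already in hand, and the only thing to verify is the trivial tensor-product factorization of the trace, which is justified by the trace-class property of $e^{-tH}$ and the finite-dimensionality of the spinor bundle.
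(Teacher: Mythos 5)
Your proof is correct, and it takes a more direct route than the paper. The paper derives the lemma as a corollary of Lemma~\ref{bounds0}: one takes the Hilbert--Schmidt identity $\|L_\star(f)e^{-t\mathcal D_\bullet^2}\|_2^2={\rm Tr}(L_\star(\bar f\star f)e^{-2t\mathcal D_\bullet^2})$ and lets $f$ tend to the constant function $1$, using the Remark after Lemma~\ref{bounds0} to justify that the Gaussian-weighted integral formula extends beyond $\mathcal A_\star$; the Gaussian integral then cancels the $\sqrt{\det g}$ and yields $\coth^d(2\tilde\Omega t)$ for the trace at time $2t$. You instead bypass the limiting argument entirely by exploiting the tensor factorization $e^{-t\mathcal D_\bullet^2}=e^{-tH}\otimes e^{(-1)^\bullet t\tilde\Omega\Sigma}$ and multiplying the two traces $(2\sinh(\tilde\Omega t))^{-d}$ and $2^d\cosh^d(\tilde\Omega t)$, both of which are indeed already available (the first as the final display of Proposition~\ref{Proposition-trace}, the second from the proof of Lemma~\ref{bounds0}). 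This is cleaner: it avoids any approximation of $1$ by Schwartz functions, at the cost of nothing, since the paper's own proof of Lemma~\ref{bounds0} rests on exactly the same factorization. One small inaccuracy: Corollary~\ref{nabla-com2} asserts boundedness of $P_\alpha(\hat\nabla)(1+H)^{-\alpha/2}$, not that $e^{-tH}$ is trace-class; the trace-class property follows instead from the discreteness of the spectrum of $H$ with binomially growing multiplicities (or simply from the convergence of the eigenvalue sum computed in Proposition~\ref{Proposition-trace}). This does not affect the validity of your argument.
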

\begin{proof}
This is a corollary of Lemma \ref{bounds0} and the Remark which
follows it, by letting $f$ going to the constant unit function.
\end{proof}

\begin{Lemma}
\label{bounds1}
If  $f\in\mathcal A_\star$ and  $t>0$, then we have the bound
$$
\|L_\star(f)e^{-t\mathcal D_\bullet^2}\|_2\leq C\,\|\bar f\star
f\|_1^{1/2}\, \max (1,t^{-d/4})\;,
$$
where the constant depends only on $\tilde{\Omega}$ and $\Theta$.
\end{Lemma}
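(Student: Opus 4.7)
The plan is to read off the desired bound directly from the explicit formula provided by Lemma~\ref{bounds0}, exploiting the positivity observation of the Remark following it. Setting
\[
I(t) := \int dz\, \sqrt{\det g}\; \bar f \star f(z)\, e^{-\tilde\Omega \tanh(2\tilde\Omega t)\langle z, gz\rangle},
\]
Lemma~\ref{bounds0} asserts that $\|L_\star(f)e^{-t\mathcal D_\bullet^2}\|_2^2 = \tilde\Omega^{d/2}\pi^{-d/2}\tanh^{-d/2}(2\tilde\Omega t)\,I(t)$, so it suffices to bound $I(t)$ and the scalar prefactor separately.

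Although $\bar f \star f$ is not pointwise non-negative, the integral $I(t)$ itself \emph{is} non-negative: this is exactly the content of the Remark following Lemma~\ref{bounds0}, and it is also immediate from the identity $\|L_\star(f)e^{-t\mathcal D_\bullet^2}\|_2^2 = \mathrm{Tr}\bigl(L_\star(f)^*L_\star(f)\,e^{-2t\mathcal D_\bullet^2}\bigr)$, which is manifestly the trace of a positive operator. Using this, together with $0 < e^{-\tilde\Omega \tanh(2\tilde\Omega t)\langle z, gz\rangle} \leq 1$ (valid because $g$ is positive definite and $\tanh(2\tilde\Omega t)>0$ for $t>0$), I would estimate
\[
I(t) = |I(t)| \leq \sqrt{\det g}\int dz\, |\bar f \star f(z)|\, e^{-\tilde\Omega \tanh(2\tilde\Omega t)\langle z, gz\rangle} \leq \sqrt{\det g}\, \|\bar f \star f\|_1.
\]

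For the scalar prefactor, concavity of $\tanh$ on $[0,\infty)$ combined with $\tanh(s)\to 1$ as $s\to\infty$ gives $\tanh(s) \geq \tanh(1)\min(s,1)$ for $s\geq 0$, hence $\tanh^{-d/2}(2\tilde\Omega t) \leq C_{\tilde\Omega}\max(1,t^{-d/2})$ with $C_{\tilde\Omega}$ depending only on $\tilde\Omega$. Since $\sqrt{\det g}$ depends only on $\tilde\Omega$ and $\Theta$ via \eqref{metric}, combining these estimates and taking a square root yields the claim. No real obstacle is expected: the delicate step—the positivity of $I(t)$—has already been settled by the Remark, and everything else is an elementary estimate.
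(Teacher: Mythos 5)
Your proposal is correct and follows exactly the route the paper intends: the paper's own proof consists of the single sentence ``This is a direct consequence of Lemma \ref{bounds0}'', and your argument simply supplies the elementary details (bounding the Gaussian by $1$ to get the $\|\bar f\star f\|_1$ factor, and bounding $\tanh^{-d/2}(2\tilde\Omega t)$ by $C\max(1,t^{-d/2})$). One minor remark: the positivity of $I(t)$ is not actually needed for the upper bound, since $I(t)\leq |I(t)|$ holds trivially and the triangle inequality already gives $|I(t)|\leq\sqrt{\det g}\,\|\bar f\star f\|_1$.
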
 
\begin{proof}
This is a direct consequence of Lemma \ref{bounds0}.
\end{proof}

\begin{Lemma}
\label{lemma-trucus}
There is $C'>0$ such that for any $f_1,f_2 \in \A_\star$ and $t>0$ one has
$$
\|L_\star(f_1) [L_\star(f_2),e^{-t \mathcal{D}_\bullet^2}]\|_1 
\leq C' t^{1/2} \sum_{\mu=1}^d \|f_1\|_2 \, \| L_\star(\partial_\mu f)
e^{-t \mathcal{D}_\bullet^2/4}]\|_1\;.
$$
\end{Lemma}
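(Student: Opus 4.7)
The plan is to combine Duhamel's formula with the first-order character of $[\mathcal{D}_\bullet^2,L_\star(f_2)]$ (obtained by recasting it as an anticommutator of $\nabla_\mu$ with $L_\star(\partial^\mu f_2)$ using Lemma \ref{nabla-com}), and then to absorb each $\nabla_\mu$ factor produced into an adjacent heat semigroup via the smoothing bound $\|e^{-\tau\mathcal{D}_\bullet^2}\nabla_\mu\|_\infty \leq C\max(1,\tau^{-1/2})$. A preliminary splitting $e^{-t\mathcal{D}_\bullet^2}=e^{-t\mathcal{D}_\bullet^2/2}\cdot e^{-t\mathcal{D}_\bullet^2/2}$ before applying Duhamel at time $t/2$ will guarantee that a semigroup factor of time at least $t/2$ remains adjacent to $L_\star(\partial^\mu f_2)$ in every term, so that the target trace-norm block $\|L_\star(\partial_\mu f_2)e^{-t\mathcal{D}_\bullet^2/4}\|_1$ can be read off.

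Concretely, I would first write
\[
[L_\star(f_2),e^{-t\mathcal{D}_\bullet^2}]=[L_\star(f_2),e^{-t\mathcal{D}_\bullet^2/2}]\,e^{-t\mathcal{D}_\bullet^2/2}+e^{-t\mathcal{D}_\bullet^2/2}\,[L_\star(f_2),e^{-t\mathcal{D}_\bullet^2/2}],
\]
and apply Duhamel $[L_\star(f_2),e^{-t\mathcal{D}_\bullet^2/2}]=\int_0^{t/2}e^{-(t/2-s)\mathcal{D}_\bullet^2}[\mathcal{D}_\bullet^2,L_\star(f_2)]e^{-s\mathcal{D}_\bullet^2}ds$ to each inner commutator. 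Since the matrix part $-(-1)^\bullet\tilde\Omega\otimes\Sigma$ of $\mathcal{D}_\bullet^2$ commutes with $L_\star(f_2)$, we have $[\mathcal{D}_\bullet^2,L_\star(f_2)]=[H,L_\star(f_2)]\otimes 1$; combining the two relations $[H,L_\star(f_2)]=-L_\star((g^{-1})^{\mu\nu}\partial_\mu\partial_\nu f_2)-2L_\star(\partial^\mu f_2)\nabla_\mu$ and $[\nabla^\mu,L_\star(g)]=L_\star((g^{-1})^{\mu\nu}\partial_\nu g)$ of Lemma \ref{nabla-com} eliminates the second derivative and produces the anticommutator form $[H,L_\star(f_2)]=-\{\nabla_\mu,L_\star(\partial^\mu f_2)\}$. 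Splitting this anticommutator gives four integrals in which $\nabla_\mu$ is always flanked by a semigroup factor of some time $\tau\in\{t/2-s,\,s,\,s+t/2,\,t-s\}$; I then use $\|L_\star(f_1)\|_\infty\leq C_1\|f_1\|_2$ (recalled from \cite{Gayral:2003dm}) together with the operator-norm bound above — which follows from Corollary \ref{nabla-com2} via $\|\nabla_\mu(1+\mathcal{D}_\bullet^2)^{-1/2}\|_\infty<\infty$ and the functional-calculus estimate $\|(1+\mathcal{D}_\bullet^2)^{1/2}e^{-\tau\mathcal{D}_\bullet^2}\|_\infty\leq C\max(1,\tau^{-1/2})$ — to absorb $\nabla_\mu$ into the appropriate semigroup. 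In the remaining block, $L_\star(\partial^\mu f_2)$ is always adjacent to a semigroup factor $e^{-\sigma\mathcal{D}_\bullet^2}$ with $\sigma\geq t/2$; splitting this as $e^{-t\mathcal{D}_\bullet^2/4}\cdot e^{-(\sigma-t/4)\mathcal{D}_\bullet^2}$ and invoking the monotonicity $\|L_\star(g)e^{-s\mathcal{D}_\bullet^2}\|_1\leq \|L_\star(g)e^{-s_0\mathcal{D}_\bullet^2}\|_1$ for $s\geq s_0$ (a consequence of $\|e^{-\sigma\mathcal{D}_\bullet^2}\|_\infty\leq 1$) bounds it by $\|L_\star(\partial_\mu f_2)e^{-t\mathcal{D}_\bullet^2/4}\|_1$. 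Integration of the singularity, $\int_0^{t/2}(t/2-s)^{-1/2}\,ds=\sqrt{2t}$, produces the announced $t^{1/2}$ factor.

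The main technical obstacle is that in the two integrals where $L_\star(\partial^\mu f_2)$ ends up on the right of the adjacent semigroup, the trace-norm block naturally takes the form $\|e^{-\sigma\mathcal{D}_\bullet^2}L_\star(\partial^\mu f_2)\|_1=\|L_\star(\overline{\partial^\mu f_2})e^{-\sigma\mathcal{D}_\bullet^2}\|_1$ (by adjoint), yielding a bound with $\overline{\partial_\mu f_2}$ rather than $\partial_\mu f_2$. This discrepancy will be reconciled with the stated bound either by invoking the lemma for $\bar f_2$ as well (both belong to $\mathcal{A}_\star$ since it is stable under complex conjugation), or more directly by showing from the explicit Hilbert--Schmidt computation of Lemma \ref{bounds0} and the factorization argument in the Remark following it that $\|L_\star(\bar g)e^{-t\mathcal{D}_\bullet^2/4}\|_1$ and $\|L_\star(g)e^{-t\mathcal{D}_\bullet^2/4}\|_1$ are comparable up to a constant depending only on $(\tilde\Omega,\Theta)$.
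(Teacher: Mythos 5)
Your overall architecture (Duhamel, first-order structure of $[\mathcal{D}_\bullet^2,L_\star(f_2)]$, absorption of the derivative into an adjacent semigroup, $s^{-1/2}$-type integrable singularity) is the right one, and your closing remark about the $\overline{\partial_\mu f_2}$ versus $\partial_\mu f_2$ discrepancy is a legitimate observation (the paper's own proof silently makes the same identification). However, there is a genuine gap in the central step. You replace $[\mathcal{D}_\bullet^2,L_\star(f_2)]$ by $-\{\nabla_\mu,L_\star(\partial^\mu f_2)\}$ and claim that after splitting the anticommutator, $\nabla_\mu$ can always be absorbed into a flanking semigroup \emph{while} $L_\star(\partial^\mu f_2)$ remains adjacent to a semigroup of time $\geq t/2$. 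This fails for half of the terms: in each Duhamel integrand exactly one of the two flanking semigroups is ``long'' and the other has time tending to $0$ at an endpoint, and in the ordering where $\nabla_\mu$ sits between $L_\star(\partial^\mu f_2)$ and the long semigroup (e.g.\ $e^{-(t/2-s)\mathcal{D}_\bullet^2}L_\star(\partial^\mu f_2)\,\nabla_\mu\,e^{-(s+t/2)\mathcal{D}_\bullet^2}$), you cannot have both: absorbing $\nabla_\mu$ into the long semigroup leaves $L_\star(\partial^\mu f_2)$ adjacent only to the short one (and $L_\star(g)$ alone is not trace class, so that block cannot be closed), while extracting $\|L_\star(\partial_\mu f_2)e^{-t\mathcal{D}_\bullet^2/4}\|_1$ from the right is blocked by the intervening $\nabla_\mu$, which does \emph{not} commute with $e^{-\tau\mathcal{D}_\bullet^2}$ (indeed $[H,\nabla_\mu]=-2i\tilde{\Omega}^2\Theta_{\mu\nu}\tilde{\nabla}^\nu\neq 0$). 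The only repair within your scheme is to commute $\nabla_\mu$ through $L_\star(\partial^\mu f_2)$ via Lemma~\ref{nabla-com}, but that generates terms $L_\star((g^{-1})^{\mu\nu}\partial_\nu\partial^\mu f_2)$, so the bound you obtain involves trace norms of second derivatives of $f_2$ as well---a weaker statement than the Lemma (it would still suffice for the iteration in Lemma~\ref{bounds2} and for Corollary~\ref{cor-traceclass}, but it is not what is claimed).

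The paper avoids this trap by a different factorization of the second-order commutator: $[\mathcal{D}_\bullet^2,L_\star(f_2)]=\mathcal{D}_\bullet[\mathcal{D}_\bullet,L_\star(f_2)]+[\mathcal{D}_\bullet,L_\star(f_2)]\mathcal{D}_\bullet$, where the bounded operator $[\mathcal{D}_\bullet,L_\star(f_2)]=iL_\star(\partial_\mu f_2)\otimes\Gamma^{\mu(+d)}$ carries the derivative of $f_2$ and the flanking factor is $\mathcal{D}_\bullet$ itself, which \emph{commutes} with its own heat semigroup. One can therefore slide $\mathcal{D}_\bullet$ into the short semigroup at the cost of $\|\mathcal{D}_\bullet e^{-\tau\mathcal{D}_\bullet^2}\|=(2e\tau)^{-1/2}$ without ever separating $L_\star(\partial_\mu f_2)$ from the long semigroup, and then conclude by the same case distinction $s\lessgtr 1/2$ that you describe. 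If you want to keep your route, you should either switch to this $\mathcal{D}_\bullet$-based splitting or state and prove the weaker inequality with second derivatives included.
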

\begin{proof}
By Lemma \ref{normalization}, $e^{-t\mathcal{D}_\bullet^2}$ is trace
class for $t>0$. We use the identity
\begin{align}
\label{commm}
[e^{A},B]
= \int_0^1 ds \frac{d}{ds} 
\big(e^{sA}Be^{(1-s)A}\big)
= \int_0^1 ds \;
e^{sA}[A,B]e^{(1-s)A}\;,
\end{align}
to get
$$
L_\star(f_1) [L_\star(f_2),e^{-t\mathcal D_\bullet^2}]
= -tL_\star(f_1)  \int_0^1 ds \;
e^{-ts\mathcal{D}^2_\bullet}[\mathcal{D}^2_\bullet,L_\star(f_2)]
e^{-t(1-s)\mathcal{D}^2_\bullet}\;.
$$
Hence we have
\begin{align*}
\|L_\star(f_1) [L_\star(f_2),e^{-t\mathcal D_\bullet^2}]\|_1
&\leq 
t \|L_\star(f_1)\| \int_0^1 ds \;\Big(
\|e^{-ts\mathcal{D}^2_\bullet/2}\D_\bullet\|\,
\|e^{-ts\mathcal{D}^2_\bullet/2}[\mathcal{D}_\bullet,
L_\star(f_2)]e^{-t(1-s)\mathcal{D}^2_\bullet}\|_1
\\
&\qquad\qquad\qquad
+ \|e^{-ts\mathcal{D}^2_\bullet}[\mathcal{D}_\bullet,L_\star(f_2)]
e^{-t(1-s)\mathcal{D}^2_\bullet/2}\|_1\|\D_\bullet
e^{-t(1-s)\mathcal{D}^2_\bullet/2}\|\Big)\;.
\end{align*}
By spectral theory, $\|\D_\bullet
e^{-t\mathcal{D}^2_\bullet}\|=(2et)^{-1/2}$. Thus, using the relation
$$
[\mathcal{D}_\bullet, L_\star(f_2) ]=\begin{cases}
i L_\star(\partial_\mu f_2) \otimes \Gamma^\mu\,,\quad\bullet=1\;,
\\
i L_\star(\partial_\mu f_2) \otimes \Gamma^{\mu+4}\,,\quad\bullet=2\;,
\end{cases}
$$
we get with $\|L_\star(f_1)\|\leq C \|f_1\|_2$ and $C':= C \sqrt{2}
\pi e^{-1} \sup_{\mu=1}^{2d}\|\Gamma^\mu \|$
\begin{align*}
&\|L_\star(f_1) [L_\star(f_2),e^{-t\mathcal D_\bullet^2}]\|_1
\\
&\qquad\leq \frac{C'}{\pi} t^{1/2} \|f_1\|_2 \sum_{\mu=1}^d\int_0^1 ds \,
s^{-1/2}(1-s)^{-1/2}\|e^{-ts\D^2_\bullet/2}L_\star(\partial_\mu f_2)
e^{-t(1-s)\D^2_\bullet/2}\|_1\;.
\end{align*}
Estimating
$$
\|e^{-ts\D^2_\bullet/2}L_\star(\partial_\mu f_2)e^{-t(1-s)\D^2_\bullet/2}\|_1\leq
\begin{cases}
\|L_\star(\partial_\mu f_2)e^{-t\D^2_\bullet/4}\|_1,\quad&
\mbox{if}\quad s\in[0,1/2]\;,\\
\|e^{-t\D^2_\bullet/4}L_\star(\partial_\mu f_2)\|_1,\quad&
\mbox{if}\quad s\in[1/2,1]\;,
\end{cases}
$$
the result follows.
\end{proof}

\begin{Lemma}
\label{bounds2}
Let $f\in\A_\star$. Then, there exists a finite constant $C(f)$ such
that for all $t>0$:
$$
\|L_\star(f) e^{-t\mathcal D_\bullet^2}\|_1\leq C(f) \,\max(t^{-d/2},t^{d/2})\;.
$$
\end{Lemma}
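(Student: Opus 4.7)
The plan is to prove the bound separately for $t\geq 1$ and $0<t\leq 1$, since $\max(t^{-d/2},t^{d/2})\geq 1$ in both regimes. For $t\geq 1$ the estimate is immediate: H\"older gives $\|L_\star(f)e^{-t\D_\bullet^2}\|_1\leq \|L_\star(f)\|\,\mathrm{Tr}(e^{-t\D_\bullet^2})=\|L_\star(f)\|\coth^d(\tilde\Omega t)$ by Lemma~\ref{normalization}, and since $\coth^d(\tilde\Omega t)\to 1$ as $t\to\infty$, this is uniformly bounded on $[1,\infty)$, hence dominated by $C(f)t^{d/2}$.

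For $0<t\leq 1$ I exploit the factorization~\eqref{factor} to write $f=g\star h$ with $g,h\in\A_\star$, and handle the non-commutativity of $L_\star(h)$ with $e^{-t\D_\bullet^2/2}$ by a commutator correction:
\begin{align*}
L_\star(f)e^{-t\D_\bullet^2}
=L_\star(g)e^{-t\D_\bullet^2/2}L_\star(h)e^{-t\D_\bullet^2/2}
+L_\star(g)\bigl[L_\star(h),e^{-t\D_\bullet^2/2}\bigr]e^{-t\D_\bullet^2/2}\,.
\end{align*}
The H\"older estimate $\|AB\|_1\leq\|A\|_2\|B\|_2$ together with Lemma~\ref{bounds1} bounds the first term by $C\,\max(1,t^{-d/2})=C\,t^{-d/2}$ (for $t\leq 1$). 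For the second term, using $\|e^{-t\D_\bullet^2/2}\|\leq 1$ and Lemma~\ref{lemma-trucus} with $t/2$ in place of $t$ gives an upper bound of $C'\,t^{1/2}\sum_\mu\|L_\star(\partial_\mu h)e^{-t\D_\bullet^2/8}\|_1$, i.e.\ the same kind of quantity with $\partial_\mu h\in\A_\star$ replacing $f$, rescaled time $t/8$, and gaining an extra factor $t^{1/2}$.

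Iterating this recursion is the heart of the argument. At each step I factorize the new Schwartz function appearing on the right as a Moyal product of two elements of $\A_\star$ via~\eqref{factor} and apply the same splitting. After $N$ iterations the accumulated principal contributions are each of the shape $\mathrm{const}\cdot t^{k/2}\cdot(t/8^k)^{-d/2}=\mathrm{const}\cdot 8^{kd/2}\,t^{(k-d)/2}$ for $k=0,\dots,N-1$, which for $t\leq 1$ is bounded by $\mathrm{const}\cdot t^{-d/2}$ since $t^{k/2}\leq 1$. The leftover remainder has the form $C\,t^{N/2}\sum_\alpha \|L_\star(\tilde f_\alpha)e^{-(t/8^N)\D_\bullet^2}\|_1$, which I bound crudely by $\|L_\star(\tilde f_\alpha)\|\coth^d(\tilde\Omega t/8^N)\leq C\,(t/8^N)^{-d}$; the choice $N=d$ makes this remainder also of order $t^{-d/2}$, closing the estimate.

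The main obstacle is the bookkeeping of the iteration: one must track the proliferation of derivatives of $h$ and the repeated applications of \eqref{factor}, and verify that the gain $t^{1/2}$ per iteration precisely compensates the cost $8^{d/2}$ incurred by the time rescaling at each crude termination step. As a byproduct the constant $C(f)$ ends up depending on a finite number of Schwartz seminorms of $f$, which is acceptable.
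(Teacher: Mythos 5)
Your proof is correct and follows essentially the same route as the paper: factorize $f$ via \eqref{factor}, control the `commuted' term by the Hilbert--Schmidt bound of Lemma \ref{bounds1} and the commutator by Lemma \ref{lemma-trucus}, iterate $d$ times, and close with the crude bound $\|L_\star(\cdot)\|\,\mathrm{Tr}(e^{-s\mathcal D_\bullet^2})=\|L_\star(\cdot)\|\coth^d(\tilde\Omega s)$ from Lemma \ref{normalization}. The only cosmetic differences are your separate (and clean) treatment of $t\geq 1$ and the splitting of the heat semigroup into two halves rather than commuting $L_\star(f_2)$ through the full $e^{-t\mathcal D_\bullet^2}$.
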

\begin{proof}
Our strategy is to iterate a combination of the factorization 
property \eqref{factor} with Lemma \ref{bounds1} and 
Lemma \ref{lemma-trucus} far enough so that we can bound
$e^{-\epsilon t\mathcal
 D_\bullet^2}$ alone in trace-norm (i.e.  without element of the
algebra of both sides). 

According to \eqref{factor}, for all $f \in
\A_\star$ there exist $f_1,f_2 \in \A_\star$ such that $f = f_1
\star f_2$, giving 
$$
L_\star(f) e^{-t\mathcal D_\bullet^2}
= L_\star(f_1) e^{-t\mathcal D_\bullet^2}L_\star(f_2)
+L_\star(f_1) \big[L_\star(f_2),e^{-t\mathcal D_\bullet^2}\big]\;.
$$
From Lemma \ref{bounds1} and Lemma \ref{lemma-trucus} we conclude
\begin{align}
\label{trucus}
\|L_\star(f) e^{-t\mathcal D_\bullet^2}\|_1
&\leq  \|L_\star(f_1) e^{-t\mathcal D_\bullet^2/2}\|_2 \,
\|e^{-t\mathcal{D}_\bullet^2/2} L_\star(f_2)\|_2
+ \|L_\star(f_1) \big[L_\star(f_2),e^{-t\mathcal D_\bullet^2}\big]\|_2
\\
&\leq  \| \bar{f}_1 \star f_1\|_1^{1/2}
\| \bar{f}_2 \star f_2\|_1^{1/2} \max(t^{-d/2},1)
+ C' t^{1/2} \sum_{\mu=1}^d \|f_1\|_2 \,\|L_\star(\partial_\mu f_2)
e^{-t\mathcal{D}^2_\bullet/4}\|_1\;.\nonumber
\end{align}
Iterating $d$-times the estimate \eqref{trucus} with the
repeated factorization
$$
\partial_{\mu_1} f_2=f_{1,\mu}\star f_{2,\mu}\;,\quad \dots\;,\quad 
\partial_{\mu_{k+1}} f_{2,\mu_1\dots\mu_k}=f_{1,\mu_1\dots\mu_{k+1}}
\star f_{2,\mu_1\dots\mu_{k+1}}\;,
$$
with $f_{1,\mu}, f_{2,\mu},\dots,f_{1,\mu_1\dots\mu_{k+1}}
\star f_{2,\mu_1\dots\mu_{k+1}} \in\A_\star$,
we get for some constants $C_0(f),\dots,C_d(f)$ depending on $f$ and
on the choice of factorization at each step: 
\begin{align*}
\|L_\star(\partial_\mu f) e^{-t\mathcal D_\bullet^2}\|_1& \leq
\sum_{k=0}^{d-1} C_k(f) t^{k/2} \max(t^{-d/2},1) 
+ C_d(f) t^{d/2}
\sum_{\mu_1,\dots,\mu_d=1}^d\|L_\star(f_{2,\mu_1\dots\mu_d}) 
e^{-t\D^2_\bullet/4^d}\|_1\;.
\end{align*}
Using Lemma \ref{normalization} we get
$$
\|L_\star(f_{2,\mu_1\dots,\mu_d})e^{-t\D^2_\bullet/4^d}\|_1\leq
\|L_\star(f_{2,\mu_1\dots,\mu_d})\|\,\|e^{-t\D^2_\bullet/4^d}\|_1\leq
C''\|f_{2,\mu_1,\dots,\mu_d}\|_2 \max(t^{-d},1)\;,
$$
which completes the proof.
\end{proof}
\begin{Corollary}
\label{cor-traceclass}
For any $f\in \mathcal{A}_\star$, the operator 
$[(\mathcal{D}_\bullet^2+1)^{-d/2},L_\star(f)]$ is of
trace class.
\end{Corollary}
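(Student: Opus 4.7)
My plan is to realize $(\mathcal{D}_\bullet^2+1)^{-d/2}$ as the Laplace transform
\[
(\mathcal{D}_\bullet^2+1)^{-d/2}=\frac{1}{\Gamma(d/2)}\int_0^\infty t^{d/2-1}e^{-t}\,e^{-t\mathcal{D}_\bullet^2}\,dt,
\]
valid strongly on $\mathcal{H}$ since $\mathcal{D}_\bullet^2\geq 0$ and $d/2\in\mathbb{N}$. Commuting formally with $L_\star(f)$ yields
\[
\big[(\mathcal{D}_\bullet^2+1)^{-d/2},\,L_\star(f)\big]=\frac{1}{\Gamma(d/2)}\int_0^\infty t^{d/2-1}e^{-t}\big[e^{-t\mathcal{D}_\bullet^2},L_\star(f)\big]\,dt,
\]
and the task reduces to establishing convergence of the right-hand side in trace norm.

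For the integrand I apply the Duhamel identity used in Lemma~\ref{lemma-trucus},
\[
\big[e^{-t\mathcal{D}_\bullet^2},L_\star(f)\big]=-t\int_0^1 e^{-ts\mathcal{D}_\bullet^2}\big[\mathcal{D}_\bullet^2,L_\star(f)\big]e^{-t(1-s)\mathcal{D}_\bullet^2}\,ds,
\]
and expand $[\mathcal{D}_\bullet^2,L_\star(f)]=\mathcal{D}_\bullet A_f+A_f\mathcal{D}_\bullet$, where $A_f:=L_\star(i\partial_\mu f)\otimes\Gamma^\mu_\bullet$. Each resulting triple product is to be bounded in $\|\cdot\|_1$ by commuting $\mathcal{D}_\bullet$ past a heat semigroup (allowed by functional calculus) and placing it adjacent to the factor whose time parameter is bounded away from zero, using the spectral estimate $\|\mathcal{D}_\bullet\,e^{-\alpha\mathcal{D}_\bullet^2}\|=(2e\alpha)^{-1/2}$. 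The remaining factor $A_f\,e^{-\tau\mathcal{D}_\bullet^2}$ (or its adjoint) is then controlled by Lemma~\ref{bounds2}, applicable because $\partial_\mu f\in\A_\star$ when $f\in\A_\star$.

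Concretely, I split the $s$-integral into $[0,1/2]$ and $[1/2,1]$. On $[0,1/2]$ the right-hand time $t(1-s)$ lies in $[t/2,t]$, so commuting $\mathcal{D}_\bullet$ to the right yields the pointwise bound
\[
\big\|e^{-ts\mathcal{D}_\bullet^2}(\mathcal{D}_\bullet A_f+A_f\mathcal{D}_\bullet)e^{-t(1-s)\mathcal{D}_\bullet^2}\big\|_1\leq C(f)\big((ts)^{-1/2}+(t/2)^{-1/2}\big)\max(t^{-d/2},t^{d/2}),
\]
which after $ds$-integration on $[0,1/2]$ is bounded by $C'(f)\,t^{-1/2}\max(t^{-d/2},t^{d/2})$; the analogous estimate on $[1/2,1]$ follows by the symmetry $s\leftrightarrow 1-s$. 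Multiplying by the Duhamel prefactor $t$ and the Laplace weight $t^{d/2-1}e^{-t}$ produces an integrand of order $t^{-1/2}$ as $t\to 0^+$ and of order $t^{d-1/2}e^{-t}$ as $t\to\infty$, both integrable on $(0,\infty)$. The $\|\cdot\|_1$-convergence justifies all formal manipulations and yields the trace-class conclusion.

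The main obstacle I anticipate is the correct placement of the unbounded operator $\mathcal{D}_\bullet$ within each triple product. A naive estimate $\|XYZ\|_1\leq\|X\|\,\|Y\|_1\|Z\|$ with $Y=A_f$ on $s\in[0,1/2]$ would force the bound $\|A_f\,e^{-ts\mathcal{D}_\bullet^2}\|_1\sim(ts)^{-d/2}$, whose integral over $s\in[0,1/2]$ diverges for $d\geq 2$. The fix, permitted because $\mathcal{D}_\bullet$ commutes with every $e^{-\alpha\mathcal{D}_\bullet^2}$, is to absorb the $\mathcal{D}_\bullet$ into the heat kernel whose time parameter is bounded below, so that the only $s$-singularity in the estimate comes from the mild $(ts)^{-1/2}$ contributed by the spectral bound; Lemma~\ref{bounds2} then supplies the polynomial-in-$t$ dependence required to make the overall $t$-integral converge.
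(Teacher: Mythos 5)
Your proof is correct and follows essentially the same route as the paper's: the Laplace-transform representation of $(\mathcal{D}_\bullet^2+1)^{-d/2}$, the Duhamel identity, the spectral bound $\|\mathcal{D}_\bullet e^{-\alpha\mathcal{D}_\bullet^2}\|=(2e\alpha)^{-1/2}$ with the split of the $s$-integral, and the trace-norm estimate of Lemma \ref{bounds2} applied to $\partial_\mu f$ are precisely the ingredients of Lemma \ref{lemma-trucus} combined with Lemma \ref{bounds2}, and your convergence count ($t^{-1/2}$ at $0$, $t^{d-1/2}e^{-t}$ at infinity) matches the paper's. The only difference is cosmetic: the paper first factorizes $f=f_1\star f_2$ and uses the Leibniz rule so as to quote Lemma \ref{lemma-trucus} verbatim, whereas you observe (correctly) that the left factor $L_\star(f_1)$ enters that lemma only through its operator norm, so the Duhamel estimate can be run directly on the bare commutator.
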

\begin{proof}
By factorization $f=f_1\star f_2$, $f_1,f_2\in\A_\star$ and  Leibniz rule:
$$
[(\mathcal{D}_\bullet^2+1)^{-d/2},L_\star(f)]=L_\star(f_1)[(\mathcal{D}_\bullet^2+1)^{-d/2},L_\star(f_2)]-\big(L_\star(\overline f_2)[(\mathcal{D}_\bullet^2+1)^{-d/2},L_\star(\overline f_1)]\big)^*\;,
$$
it suffices to show that $L_\star(f_1)[(\mathcal{D}_\bullet^2+1)^{-d/2},L_\star(f_2)]$ is of trace class for arbitrary $f_1,f_2\in\A_\star$.
By spectral theory,
$$
L_\star(f_1)[(\mathcal{D}_\bullet^2+1)^{-d/2},L_\star(f_2)]
=\frac{1}{\Gamma(d/2)} \int_0^\infty dt \;t^{d/2-1}
L_\star(f_1)[e^{- t(\mathcal{D}_\bullet^2+1)} ,L_\star(f_2)]\;.
$$
Combining Lemma \ref{lemma-trucus} with Lemma \ref{bounds2} we obtain for a finite constant
depending only on $f$:
\begin{align*}
&\|L_\star(f_1)[(\mathcal{D}_\bullet^2+1)^{-d/2},L_\star(f_2)]\|_1
\leq  C(f)\,\int_0^\infty dt \;
e^{-t} \,t^{d/2-1}\cdot t^{1/2}\cdot \max(t^{-d/2},t^{d/2})\;.
\end{align*}
As the integral converges, we are done.
\end{proof}

We have arrived at the main result of this subsection, that the
spectral triple $(\mathcal{A}_\star,\mathcal{H},\mathcal{D}_\bullet)$ has
metric dimension $d$ and not $2d$ (remember that $d$ is even).
\begin{Theorem}
 For $f\in\A_\star$, $\bullet=1,2$, the operator $L_\star(f)
 (1+\mathcal D_\bullet^2)^{-d/2}$ belongs to $\mathcal
 L^{1,\infty}(\mathcal H)$ and for any Dixmier trace ${\rm
   Tr}_\omega$, we have
$$
{\rm Tr}_\omega\big(L_\star(f) (1+\mathcal D_\bullet^2)^{-d/2}\big)
=\frac{1}{\pi^{d/2} (d/2)!} \int dx\,\sqrt{\det g}\;f(x)\;.
$$
\end{Theorem}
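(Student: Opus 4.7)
The plan is to invoke the Tauberian machinery of \cite{CGRS1}, which converts the questions of Dixmier traceability and of the value of the Dixmier trace of $L_\star(f)(1+\D_\bullet^2)^{-d/2}$ into a problem about the small-$t$ asymptotics of the heat-type trace $\mathrm{Tr}\bigl(L_\star(f)e^{-t\D_\bullet^2}\bigr)$. That asymptotic is, in turn, supplied in closed form by Proposition~\ref{Proposition-trace} (taken with $k=0$) combined with the matrix-trace calculation that appears inside the proof of Lemma~\ref{bounds0}.

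\textbf{Step 1 (Heat-trace asymptotic).} For $f\in\A_\star\subset\B_\star$, writing $e^{-t\D_\bullet^2}=e^{-tH}\otimes e^{(-1)^\bullet t\tilde\Omega\Sigma}$, multiplying the scalar trace of Proposition~\ref{Proposition-trace} by the matrix trace $2^d\cosh^d(\tilde\Omega t)$, and using $\sinh(2x)=2\sinh(x)\cosh(x)$, one obtains the exact identity
\[
\mathrm{Tr}\bigl(L_\star(f)e^{-t\D_\bullet^2}\bigr)
=\Big(\frac{\tilde\Omega\coth(\tilde\Omega t)}{\pi}\Big)^{\!d/2}\!\int f(z)\,\sqrt{\det g}\;e^{-\tilde\Omega\tanh(\tilde\Omega t)\langle z,gz\rangle}\,dz\,.
\]
The expansions $\coth(\tilde\Omega t)=(\tilde\Omega t)^{-1}+O(t)$ and $\tanh(\tilde\Omega t)=O(t)$, together with dominated convergence, then give
\[
\mathrm{Tr}\bigl(L_\star(f)e^{-t\D_\bullet^2}\bigr)
=\frac{1}{(\pi t)^{d/2}}\int f(z)\sqrt{\det g}\,dz\;+\;O\!\big(t^{-d/2+1}\big)\,,\qquad t\to 0^+\,.
\]

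\textbf{Step 2 (Zeta residue via Mellin transform).} By Lemma~\ref{bounds2}, the trace norm $\|L_\star(f)e^{-t\D_\bullet^2}\|_1$ is polynomially bounded on every compact subset of $(0,\infty)$ and decays exponentially at infinity, so that the Mellin representation
\[
L_\star(f)(1+\D_\bullet^2)^{-ds/2}
=\frac{1}{\Gamma(ds/2)}\int_0^\infty t^{ds/2-1}e^{-t}\,L_\star(f)e^{-t\D_\bullet^2}\,dt
\]
converges absolutely in trace norm for $\Re(s)>1$. Taking traces and splitting the $t$-integral at $t=1$, the large-$t$ part is entire in $s$, while the small-$t$ part inherits, via Step~1, a simple pole at $s=1$. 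A direct computation yields
\[
\mathrm{Res}_{s=1}\mathrm{Tr}\bigl(L_\star(f)(1+\D_\bullet^2)^{-ds/2}\bigr)
=\frac{1}{\Gamma(d/2)}\cdot\frac{2}{d}\cdot\frac{1}{\pi^{d/2}}\int f(z)\sqrt{\det g}\,dz
=\frac{1}{\pi^{d/2}(d/2)!}\int f(z)\sqrt{\det g}\,dz,
\]
using $\Gamma(d/2+1)=(d/2)\Gamma(d/2)=(d/2)!$ for even $d$.

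\textbf{Step 3 ($\mathcal L^{1,\infty}$-membership and Dixmier-trace identification).} The very same heat-kernel estimates, fed into the $\mathcal L^{1,\infty}$ criterion of \cite{CGRS1}, show that $L_\star(f)(1+\D_\bullet^2)^{-d/2}$ belongs to the Dixmier ideal, and the Tauberian theorem of \cite{CGRS1} identifies every Dixmier trace of this operator with the residue obtained in Step~2, giving the announced formula.

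\textbf{Expected main obstacle.} The principal subtlety is that $L_\star(f)$ does not commute with $(1+\D_\bullet^2)^{-d/2}$, so the product is neither self-adjoint nor positive -- even for positive $f$ -- and Connes' classical Tauberian theorem, which identifies the Dixmier trace with the zeta-residue only for positive operators, cannot be applied directly. The framework of \cite{CGRS1} is precisely designed to bypass positivity by phrasing the relevant estimates in terms of the Hilbert--Schmidt norms $\|L_\star(f)e^{-t\D_\bullet^2}\|_2$ (computed for us in Lemma~\ref{bounds0}) rather than in terms of eigenvalues. The remaining technical work -- dominated-convergence bounds justifying the trace/integral exchange in Step~2, and control of the subleading terms in the expansion of $\coth(\tilde\Omega t)$ -- is straightforward given Lemmas~\ref{bounds0} and~\ref{bounds2}.
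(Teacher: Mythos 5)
Your Steps 1 and 2 are correct and coincide with the residue computation in the paper's proof (the paper gets the same constant via $\lim_{s\to 1^+}(s-1)\Gamma(d(s-1)/2)/\Gamma(ds/2)=1/(d/2)!$), and you have correctly located the main obstacle: $L_\star(f)\langle\mathcal D_\bullet\rangle^{-d}$ is neither positive nor normal. The problem is that Step 3 does not actually overcome that obstacle; it asserts that the machinery of \cite{CGRS1} applies directly to the one-sided operator $L_\star(f)(1+\mathcal D_\bullet^2)^{-d/2}$, whereas the results invoked in the paper (Propositions 4.8 and 5.13 of \cite{CGRS1}) control operators with algebra elements on \emph{both} sides of the resolvent power, i.e. $L_\star(f_1)(1+\mathcal D_\bullet^2)^{-d/2}L_\star(f_2)$, whose trace is governed by the Hilbert--Schmidt norm $\|L_\star(f)e^{-t\mathcal D_\bullet^2}\|_2^2$ of Lemma \ref{bounds0} — a quantity involving $\bar f\star f$, not $f$. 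The paper's proof therefore proceeds by (i) factorizing $f=f_1\star f_2$ via \eqref{factor} and writing
\begin{align*}
L_\star(f)(1+\mathcal D_\bullet^2)^{-d/2}
= L_\star(f_1)(1+\mathcal D_\bullet^2)^{-d/2}L_\star(f_2)
+ L_\star(f_1)\big[L_\star(f_2),(1+\mathcal D_\bullet^2)^{-d/2}\big]\,,
\end{align*}
(ii) showing the commutator term is trace class (Corollary \ref{cor-traceclass}, which itself rests on Lemmas \ref{lemma-trucus} and \ref{bounds2}) so that it contributes nothing to any Dixmier trace, and (iii) using polarization to reduce to the positive operator $L_\star(\bar f)(1+\mathcal D_\bullet^2)^{-d/2}L_\star(f)$, to which the zeta-residue/Dixmier-trace identification of \cite{CGRS1} applies. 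This factorization-plus-commutator argument is the substantive content of the proof and is absent from your proposal; without it, the equality between the residue you compute in Step 2 (for the non-normal one-sided operator) and the Dixmier trace is unjustified.

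Two smaller points. First, your claim that $\|L_\star(f)e^{-t\mathcal D_\bullet^2}\|_1$ decays exponentially as $t\to\infty$ is false: $\mathcal D_\bullet$ has nontrivial kernel and ${\rm Tr}(e^{-t\mathcal D_\bullet^2})=\coth^d(\tilde\Omega t)\to 1$, so it is only bounded; convergence of the Mellin integral at infinity comes from the factor $e^{-t}$ produced by the shift $\mathcal D_\bullet^2\mapsto 1+\mathcal D_\bullet^2$. Second, note that [Proposition 5.13]{\cite{CGRS1}} as used in the paper itself ``relies on Corollary \ref{cor-traceclass}'', so the trace-class commutator estimate cannot be avoided even for the sandwiched operator; it is genuinely needed, not a technicality one can wave through.
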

\begin{proof}
We use  the factorization property \eqref{factor} to write
$f=f_1\star f_2$ with $f_1,f_2\in\A_\star$, which gives
\begin{align*}
L_\star( f) (1+\mathcal D_\bullet^2)^{-d/2}
=L_\star( f_1) (1+\mathcal D_\bullet^2)^{-d/2}L_\star( f_2)+
L_\star( f_1) \big[L_\star( f_2), (1+\mathcal D_\bullet^2)^{-d/2}\big] \;.
\end{align*}
By Corollary \ref{cor-traceclass}, $L_\star( f_1) [L_\star( f_2),
(1+\mathcal D_\bullet^2)^{-d/2}]$ is trace class, and combining Lemma
\ref{bounds0} and \cite[Proposition 4.8]{CGRS1} we get that $L_\star(
f_1) (1+\mathcal D_\bullet^2)^{-d/2}L_\star(f_2)$ belongs to $\mathcal
L^{1,\infty}(\mathcal H)$. Therefore, $L_\star( f) (1+\mathcal
D_\bullet^2)^{-d/2}$ is Dixmier-trace-class too, and any of its
Dixmier trace coincides with those of $L_\star( f_1) (1+\mathcal
D_\bullet^2)^{-d/2}L_\star( f_2)$. 

Using a polarization identity, it
suffices to compute 
$\mathrm{Tr}_\omega \big(L_\star( \bar{f}) (1+\mathcal
D_\bullet^2)^{-d/2}L_\star( f)\big)$. Note that
\begin{align*}
&\lim_{s\to 1^+}(s-1){\rm Tr}\big(L_\star(\bar f) 
(1+\mathcal D_\bullet^2)^{-ds/2}L_\star(f)\big)
\\
&\quad=\lim_{s\to 1^+}\frac{s-1}{\Gamma(ds/2)}
\int_0^\infty dt \,e^{-t}\, t^{ds/2-1}
{\rm Tr}\big(L_\star(\bar f) e^{-t\mathcal D_\bullet^2}L_\star(f)\big)
\\
&\quad=\lim_{s\to 1^+}\frac{s-1}{\Gamma(ds/2)}
\int_0^\infty dt \,e^{-t}\, t^{d(s-1)/2-1} 
\frac{(\tilde{\Omega} t)^{d/2}}{
\pi^{d/2} \tanh^{d/2}(\tilde{\Omega} t)} 
\int d z  \, \sqrt{\det g}\;\bar f\star f(z)
e^{-\tilde{\Omega} \tanh(\tilde{\Omega}t) \langle z,g z\rangle}
\\
&\quad=\frac{1}{\pi^{d/2}} \lim_{s\to 1^+}\frac{(s-1)
\Gamma(d(s-1)/2)}{\Gamma(ds/2)}\int d z  \,\sqrt{\det g}\;\bar f\star f(z)\;.
\end{align*}
Now \cite[Proposition 5.13]{CGRS1}, which relies on 
Corollary \ref{cor-traceclass}, gives for any Dixmier trace
\begin{align}
\label{azerty}
{\rm Tr}_\omega\big(L_\star(\bar f) (1+\mathcal D_\bullet^2)^{-d/2}
L_\star(f)\big)
=\frac{1}{\pi^{d/2}(d/2)!} \int dx\,\sqrt{\det g}\;\bar f\star f(x)\;.
\end{align}
This all what we needed to
prove.
\end{proof}

\subsection{Regularity and dimension spectrum}

Our next task is to check the regularity condition.

\begin{Proposition}
\label{Proposition:regularity}
For any $f \in \mathcal{B}_\star$ and $\bullet=1,2$, both $L_\star(f)$
and $[\mathcal{D}_\bullet,L_\star(f)]$ belong to $\bigcap_{n=1}^\infty
\mathrm{dom}\;\delta_\bullet^n$, where $\delta_\bullet
(T):=[\langle\mathcal{D}_\bullet\rangle,T]$ and $\langle
\mathcal{D}_\bullet\rangle:= (\mathcal{D}_\bullet^2+1)^{\frac{1}{2}}$.
\end{Proposition}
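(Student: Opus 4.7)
My strategy is to reduce checking iterated commutators with $\langle\mathcal{D}_\bullet\rangle=(1+\mathcal{D}_\bullet^2)^{1/2}$ to iterated commutators with $\mathcal{D}_\bullet^2$ itself, and then to exploit the closed algebraic structure of commutators between $H$, $L_\star(f)$, $\nabla_\mu$, $\tilde{\nabla}_\mu$ furnished by Lemma \ref{nabla-com}. Concretely, I would invoke the standard equivalence (Connes--Moscovici) which states that $T \in \bigcap_n \mathrm{dom}\,\delta_\bullet^n$ provided $\mathrm{ad}_{\mathcal{D}_\bullet^2}^n(T)\,\langle\mathcal{D}_\bullet\rangle^{-n}$ extends to a bounded operator for every $n\geq 0$, and similarly for $\mathrm{ad}_{\mathcal{D}_\bullet^2}^n([\mathcal{D}_\bullet,T])\,\langle\mathcal{D}_\bullet\rangle^{-n}$. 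The reduction from $\langle\mathcal{D}_\bullet\rangle$ to $\mathcal{D}_\bullet^2$ is obtained by iterating the integral representation
\[
[L^{1/2},T]=\frac{1}{\pi}\int_0^\infty \frac{\sqrt{\lambda}}{L+\lambda}\,[L,T]\,\frac{1}{L+\lambda}\,d\lambda,\qquad L:=1+\mathcal{D}_\bullet^2,
\]
where convergence in the operator norm follows from the a priori bounds we produce below.

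Since $\mathcal{D}_\bullet^2 = H\otimes 1-(-1)^\bullet \tilde{\Omega}\otimes\Sigma$ by \eqref{H-x} and the matrix factor $\tilde{\Omega}\Sigma$ is bounded and commutes with $L_\star(f)$ and with $\Gamma^\mu,\Gamma^{\mu+d}$, the commutators $[\mathcal{D}_\bullet^2,L_\star(f)]$ and $[\mathcal{D}_\bullet^2,[\mathcal{D}_\bullet,L_\star(f)]]$ reduce on the $L^2(\mathbb{R}^d)$ factor to commutators with $H$. Lemma \ref{nabla-com} gives a closed system:
\[
[H,L_\star(f)] = -L_\star\bigl((g^{-1})^{\mu\nu}\partial_\mu\partial_\nu f\bigr) - 2\,L_\star(\partial^\mu f)\,\nabla_\mu,
\]
together with $[\nabla_\mu,L_\star(f)]=L_\star((g^{-1})_{\mu\nu}\partial^\nu f)$, $[\tilde{\nabla}_\mu,L_\star(f)]=L_\star(\partial_\mu f)$, and the brackets $[H,\nabla_\mu]$, $[H,\tilde{\nabla}_\mu]$ expressed linearly in $\nabla,\tilde{\nabla}$. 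An easy induction on $n$ then shows that $\mathrm{ad}_H^n(L_\star(f))$ is a finite sum of terms of the form $L_\star(P^{(\alpha)}f)\,R_\alpha(\nabla,\tilde{\nabla})$, where each $P^{(\alpha)}$ is a constant-coefficient differential operator and $R_\alpha$ is a polynomial in $\nabla,\tilde{\nabla}$ of total order at most $n$ (each commutator with $H$ increases the total $\nabla$-order by at most one). Since $\partial^\beta f\in\mathcal{B}_\star$ for all $\beta$, each factor $L_\star(P^{(\alpha)}f)$ is bounded by the second estimate on $\|L_\star(\cdot)\|$ recorded after \eqref{factor}.

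Corollary \ref{nabla-com2} then supplies the crucial bound $\|R_\alpha(\nabla,\tilde{\nabla})(1+H)^{-n/2}\|<\infty$. Since $\tilde{\Omega}\Sigma$ is bounded, $(1+H)^{n/2}(1+\mathcal{D}_\bullet^2)^{-n/2}$ is bounded on $\mathcal{H}$ for every $n$, so $\mathrm{ad}_{\mathcal{D}_\bullet^2}^n(L_\star(f))\langle\mathcal{D}_\bullet\rangle^{-n}$ is bounded. For the other generator $[\mathcal{D}_\bullet,L_\star(f)]$, the formula \eqref{Gamma} identifies it with $L_\star(i\partial_\mu f)\otimes\Gamma^\mu$ (resp.\ $\otimes\Gamma^{\mu+d}$), so since $\partial_\mu f\in\mathcal{B}_\star$ and $\Gamma^\mu,\Gamma^{\mu+d}$ are bounded and commute with everything on the $L^2(\mathbb{R}^d)$ side, the same induction applies verbatim. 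The main obstacle, as anticipated, is handling the square root $\langle\mathcal{D}_\bullet\rangle$: one has to justify the iterated nested-integral expansion for $\mathrm{ad}_{\langle\mathcal{D}_\bullet\rangle}^n$ in terms of $\mathrm{ad}_{\mathcal{D}_\bullet^2}^k$ with $k\leq n$ and verify operator-norm convergence using the polynomial bounds $\|\mathrm{ad}_{\mathcal{D}_\bullet^2}^k(T)(1+\mathcal{D}_\bullet^2)^{-k/2}\|<\infty$ established above; this is the only step that is not purely algebraic.
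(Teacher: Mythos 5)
Your proposal follows essentially the same route as the paper's proof: the paper likewise reduces $\delta_\bullet^n$-smoothness to the boundedness of $\langle\mathcal{D}_\bullet\rangle^{-n}\,(\mathrm{ad}\,\mathcal{D}_\bullet^2)^{n+m}(T)\,\langle\mathcal{D}_\bullet\rangle^{-m}$ (quoting the standard equivalence of $\bigcap_n\mathrm{dom}\,\delta_\bullet^n$ with $\bigcap_{n,m}\mathrm{dom}(R_\bullet^m\circ L_\bullet^n)$ rather than re-deriving it from the resolvent integral), then uses Lemma \ref{nabla-com} to write $(\mathrm{ad}\,H)^{k}(L_\star(f))$ as a finite sum of $L_\star(\partial^\alpha f)$ times polynomials in $\nabla,\tilde\nabla$ of degree at most $k$, and concludes with Corollary \ref{nabla-com2}. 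One small correction: $\Sigma$ does \emph{not} commute with $\Gamma^\mu$ (indeed $[\Sigma,b_\nu]=-2b_\nu$), so the matrix factor in $(\mathrm{ad}\,\mathcal{D}_\bullet^2)^{n}([\mathcal{D}_\bullet,L_\star(f)])$ contributes extra terms $(\mathrm{ad}\,\Sigma)^{j}(\Gamma^\mu)$; since these are bounded matrices your induction still closes, and the paper absorbs exactly this point through the binomial expansion of $\big(\mathrm{ad}(H-(-1)^\bullet\tilde\Omega\Sigma)\big)^{n+m}$.
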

\begin{proof}
 It is well known  (see for example \cite{CGRS2}) that $\bigcap_{n=1}^\infty
 \mathrm{dom}\;\delta_\bullet^n=\bigcap_{n,m=1}^\infty
 \mathrm{dom}\;R_\bullet^m\circ L_\bullet^n$ where $R_\bullet$ and
 $L_\bullet$ are the unbounded linear operators given by
$$
R_\bullet(T):=[\mathcal D_\bullet^2,T]\langle 
\mathcal D_\bullet\rangle^{-1},\quad 
L_\bullet(T):=\langle\mathcal D_\bullet\rangle^{-1}
[\mathcal D_\bullet^2,T]\;.
$$
By an easy inductive argument, we see that for 
$T\in\mathcal B_\star\cup[\mathcal D_\bullet,\mathcal B_\star]$, we have
$$
R_\bullet^m\circ L_\bullet^n(T)=\langle\mathcal D_\bullet\rangle^{-n}\,\big({\rm ad}(\mathcal D_\bullet^2)\big)^{n+m}(T)\,\langle\mathcal D_\bullet\rangle^{-m}\;.
$$
Since $\mathcal{D}_\bullet^2=H-(-1)^\bullet\tilde{\Omega}\Sigma$ with $\Sigma$ bounded and 
$[H,\Sigma]=0$, we get
\begin{align}
\label{RL}
R_\bullet^m\circ L_\bullet^n(T)=\sum_{k=0}^{n+m} \binom{n+m}{k} 
\big((-1)^{\bullet+1}\tilde{\Omega} \,
\mathrm{ad}(\Sigma)\big)^{n+m-k}
\bigg(\langle\mathcal D_\bullet\rangle^{-n}\,
\big({\rm ad}(H)\big)^{k}(T)\,\langle\mathcal D_\bullet\rangle^{-m}\bigg)\;.
\end{align}
We treat the worst case only, which is when $k=n+m$ (for the other
values of $k$, one may use similar but simpler arguments). That is, we
need to show the boundedness of
$$
\langle\mathcal D_\bullet\rangle^{-n}\,
\big({\rm ad}(H)\big)^{n+m}(T)\,\langle\mathcal D_\bullet\rangle^{-m},
\quad n,m\in\mathbb N\,.
$$
The preceding expression applied to 
$T=[\mathcal{D}_1,L_\star(f)]=
i \Gamma^\mu  L_\star(\partial_\mu f)$ 
gives 
\begin{align*}
&\langle\mathcal D_1\rangle^{-n}\,\Gamma^\mu \big({\rm
ad}(H)\big)^{n+m}(L_\star(i \partial_\mu f))\,
\langle\mathcal D_1\rangle^{-m}
\\
&\qquad= 
\big(\Gamma^\mu 
+\langle\mathcal D_1\rangle^{-n} [\Gamma^\mu, \langle\mathcal
D_1\rangle^{n} ] \big)\langle\mathcal D_1\rangle^{-n} 
\big({\rm
ad}(H)\big)^{n+m}(L_\star(i \partial_\mu f))\,
\langle\mathcal D_1\rangle^{-m}\;,
\end{align*}
and using $[\Gamma^\mu, \langle\mathcal D_1\rangle^{n} ]=[\Gamma^\mu,
(\tilde{\Omega}\Sigma)^n]$ it is enough to treat the case
$T\in\mathcal B_\star$.  Similarly for $\mathcal{D}_2$.  Now, Lemma
\ref{nabla-com} shows that $(\mathrm{ad}(H))^{n+m} (T)$ can be written
as a sum of terms of the form $\hat{\nabla}^k
L_\star(f)\hat{\nabla}^l$ with $f\in\mathcal B_\star$ and $k$ (resp.
$l$) not exceeding $n$ (resp.  $m$), where $\hat{\nabla}$ is $\nabla$
or $\tilde{\nabla}$. Then, one concludes using Corollary
\ref{nabla-com2}.
\end{proof}

In the next and in analogy with the regularity condition, we will
prove that one can determine the dimension spectrum with the
derivations $R_\bullet$ and $L_\bullet$, instead of $\delta_\bullet$.
\begin{Proposition}
Let $b $ belonging to the polynomial algebra generated by
$\delta_\bullet^n(\A_\star)$ and 
$\delta_\bullet^n([\mathcal D_\bullet,\A_\star])$. Let
also $\zeta_b(z):={\rm Tr}(b\langle \mathcal{D}_\bullet\rangle^{-z})$,
defined on the open half plane $\Re(z)>2d$. Then, for all
$M\in\mathbb R$, $\zeta_b$ is a finite sum of terms of the form
${\rm Tr}\big(R_\bullet^{n_1}(b_1)\cdots
R_\bullet^{n_k}(b_k)\langle\mathcal{D}_\bullet\rangle^{-z-m}\big)$,
$n_j,k,m\in\mathbb N$, $b_j\in \mathcal A_\star\cup [\mathcal
D_\bullet,\mathcal A_\star]$, plus a function holomorphic on the half plane
$\Re(z)>M$.
\label{Prop-delta-R}
\end{Proposition}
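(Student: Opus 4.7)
The plan is to invert algebraically the relation between $\delta_\bullet(T)=[\langle\mathcal{D}_\bullet\rangle,T]$ and $R_\bullet(T)=[\mathcal{D}_\bullet^2,T]\langle\mathcal{D}_\bullet\rangle^{-1}$, so as to convert every $\delta_\bullet$ into an $R_\bullet$ plus a correction carrying an extra factor $\langle\mathcal{D}_\bullet\rangle^{-1}$ on the right. Writing $[\langle\mathcal{D}_\bullet\rangle^2,T]=[\mathcal{D}_\bullet^2,T]$ in two different ways, as $R_\bullet(T)\langle\mathcal{D}_\bullet\rangle$ and as $\delta_\bullet(T)\langle\mathcal{D}_\bullet\rangle+\langle\mathcal{D}_\bullet\rangle\delta_\bullet(T)=2\delta_\bullet(T)\langle\mathcal{D}_\bullet\rangle+\delta_\bullet^2(T)$, yields the basic identity
\begin{equation*}
R_\bullet(T)=2\delta_\bullet(T)+\delta_\bullet^2(T)\langle\mathcal{D}_\bullet\rangle^{-1},\qquad\text{equivalently}\qquad \delta_\bullet(T)=\tfrac12 R_\bullet(T)-\tfrac12\delta_\bullet^2(T)\langle\mathcal{D}_\bullet\rangle^{-1}.
\end{equation*}
I will use furthermore that $\delta_\bullet$ and $R_\bullet$ commute as derivations (both annihilate $\langle\mathcal{D}_\bullet\rangle^{\pm1}$), and that by Proposition~\ref{Proposition:regularity} every $\delta_\bullet^k(T)$ stays bounded for $T\in\mathcal A_\star\cup[\mathcal D_\bullet,\mathcal A_\star]$.

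Iterating the identity inside $\delta_\bullet^n(T)$ and repeatedly using $R_\bullet\delta_\bullet=\delta_\bullet R_\bullet$, I obtain, for any $N\in\mathbb N$, a finite expansion of the form
\begin{equation*}
\delta_\bullet^n(T)=\sum_{\alpha}c_\alpha\,R_\bullet^{k_\alpha}(T)\,\langle\mathcal{D}_\bullet\rangle^{-m_\alpha}+Y_N\,\langle\mathcal{D}_\bullet\rangle^{-N},
\end{equation*}
with $m_\alpha<N$ and $Y_N$ a bounded operator (each coefficient in $Y_N$ being a sum of bounded $\delta_\bullet^j(T)$'s). For a general product $b=\prod_{j=1}^l\delta_\bullet^{n_j}(T_j)$ I apply this factor by factor, then sweep the resulting $\langle\mathcal{D}_\bullet\rangle^{-m_j}$'s to the rightmost position by iterated use of the commutator formula
\begin{equation*}
[P,\langle\mathcal{D}_\bullet\rangle^{-1}]=-\langle\mathcal{D}_\bullet\rangle^{-1}\,\delta_\bullet(P)\,\langle\mathcal{D}_\bullet\rangle^{-1}.
\end{equation*}
Each such move introduces new $\delta_\bullet$-factors acting on polynomials in $R_\bullet^k(T_j)$'s, but buys at least one additional $\langle\mathcal{D}_\bullet\rangle^{-1}$ on the right; the new $\delta_\bullet$'s are themselves reducible to $R_\bullet$-form via the first step. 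After finitely many iterations, $b$ takes the form of a finite sum of products $R_\bullet^{n_1}(b_1)\cdots R_\bullet^{n_k}(b_k)\langle\mathcal{D}_\bullet\rangle^{-m}$ with $b_j\in\mathcal A_\star\cup[\mathcal D_\bullet,\mathcal A_\star]$, plus a remainder $Y\langle\mathcal{D}_\bullet\rangle^{-N}$ with $Y$ bounded.

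Multiplying by $\langle\mathcal{D}_\bullet\rangle^{-z}$ and taking the trace, the main sum is exactly of the announced form; the remainder contributes $\mathrm{Tr}(Y\langle\mathcal{D}_\bullet\rangle^{-z-N})$. Since $\langle\mathcal{D}_\bullet\rangle^{-s}$ is trace class for $\Re(s)>2d$ (consequence of Lemma~\ref{normalization}) and $Y$ is bounded, this remainder is holomorphic on $\{\Re(z)>2d-N\}$; choosing $N>2d-M$ concludes the proof. The main obstacle I foresee is the combinatorial bookkeeping in the second step: one must organize the nested reduction (commutator pushes producing fresh $\delta_\bullet$-factors which are themselves reduced, etc.) in such a way that the total number of $\langle\mathcal{D}_\bullet\rangle^{-1}$'s collected on the right grows monotonically and the process terminates after a finite, $N$-dependent depth, with a bounded remainder — which is exactly what the regularity of Proposition~\ref{Proposition:regularity}, guaranteeing boundedness of every iterated $\delta_\bullet$-image, makes possible.
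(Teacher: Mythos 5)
Your argument is correct and shares the overall architecture of the paper's proof: (i) convert each $\delta_\bullet$ into an $R_\bullet$ modulo a correction carrying an extra factor $\langle\mathcal{D}_\bullet\rangle^{-1}$ of decay, (ii) sweep the accumulated negative powers of $\langle\mathcal{D}_\bullet\rangle$ to the right by iterated commutation, each move buying further decay, and (iii) truncate at depth $N>2d-M$, the remainder being a bounded operator times $\langle\mathcal{D}_\bullet\rangle^{-N}$ and hence contributing a trace holomorphic for $\Re(z)>2d-N$. The genuine difference is the conversion identity in step (i). The paper uses the resolvent-integral representation of $\langle\mathcal{D}_\bullet\rangle$ to obtain $2\delta_\bullet=R_\bullet-\mathcal{T}_\bullet\circ R_\bullet^2$ with $\mathcal{T}_\bullet$ an explicit contractive integral operator commuting with $R_\bullet$, which yields the closed formula $2^n\delta_\bullet^n=\sum_k\binom{n}{k}(-1)^k\,\mathcal{T}_\bullet^k\circ R_\bullet^{n+k}$, and then expands the $\lambda$-integrals by commuting resolvents. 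Your identity $\delta_\bullet(T)=\tfrac12 R_\bullet(T)-\tfrac12\delta_\bullet^2(T)\langle\mathcal{D}_\bullet\rangle^{-1}$ is purely algebraic and avoids the integral representation altogether, at the price of an open-ended recursion (the correction still contains $\delta_\bullet$'s); but since each branch either strictly decreases the $\delta_\bullet$-count at no cost or strictly increases the number of right-hand factors $\langle\mathcal{D}_\bullet\rangle^{-1}$, truncation at depth $N$ terminates, and Proposition~\ref{Proposition:regularity} (together with $\delta_\bullet\circ R_\bullet=R_\bullet\circ\delta_\bullet$ and the identification of $\bigcap_n\mathrm{dom}\,\delta_\bullet^n$ with $\bigcap_{n,m}\mathrm{dom}\,R_\bullet^m\circ L_\bullet^n$ quoted there) guarantees that every coefficient appearing in the remainder is bounded. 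This is, if anything, more elementary than the paper's route. Two small slips that do not affect the argument: the commutator you need is $[\langle\mathcal{D}_\bullet\rangle^{-1},P]=-\langle\mathcal{D}_\bullet\rangle^{-1}\delta_\bullet(P)\langle\mathcal{D}_\bullet\rangle^{-1}$, so your formula as written (with $[P,\langle\mathcal{D}_\bullet\rangle^{-1}]$ on the left) has the wrong sign; and $R_\bullet$ is not a derivation, although it does commute with $\delta_\bullet$, which is all you actually use.
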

\begin{proof}
For $\Re(z)>2d$, $\langle \mathcal{D}_\bullet\rangle^{-z}$ is a trace
class operator. Since the algebra generated by $\delta_\bullet^n( L_\star
(f))$ and $\delta_\bullet^n( [\mathcal{D}_\bullet,L_\star (f)])$ consists by
Proposition~\ref{Proposition:regularity} of \emph{bounded
  operators}, $b \langle \mathcal D_\bullet\rangle^{-z}$ is trace class for
$\Re(z)>2d$ and any $b$ in this polynomial algebra.

So let $b\in\mathcal A_\star\cup [\mathcal D_\bullet, \mathcal A_\star]$. 
From the spectral representation of a positive operator
$A$:
$$
A=\frac{1}{\pi} \int_0^\infty d\lambda\,
\lambda^{-1/2}\frac{A^2}{A^2+\lambda}\;,
$$
we get
$$
\delta_\bullet(b)=\frac{1}{\pi}
\int_0^\infty d\lambda\,\lambda^{1/2}\frac1{\langle
\mathcal{D}_\bullet\rangle^2+\lambda}[\langle
\mathcal{D}_\bullet\rangle^2, b]\frac1{\langle
\mathcal{D}_\bullet\rangle^2+\lambda}\;.
$$
Commuting $[\langle\mathcal{D}_\bullet\rangle^2, b]$ with $(\langle
\mathcal{D}_\bullet\rangle^2+\lambda)^{-1}$ to the left, we get after some
re-arrangements and using $\int
d\lambda\,\lambda^{1/2}t(t^2+\lambda)^{-2}=\pi/2$:
$$
\delta_\bullet(b)=\tfrac12 R_\bullet(b)-\frac1\pi\int_0^\infty 
d\lambda\,\lambda^{1/2}\frac1{\langle
\mathcal{D}_\bullet\rangle^2+\lambda}R_\bullet^2(b)\frac{\langle
\mathcal{D}_\bullet\rangle^2}{(\langle
\mathcal{D}_\bullet\rangle^2+\lambda)^2}\;.
$$
This suggests to introduce the map $\mathcal T_\bullet:\mathcal B(\mathcal H)\to \mathcal B(\mathcal H)$ given by
$$
A\mapsto \mathcal T_\bullet(A):=\frac2\pi\int_0^\infty d\lambda\,\lambda^{1/2}\frac1{\langle
\mathcal{D}_\bullet\rangle^2+\lambda}A\frac{\langle
\mathcal{D}_\bullet\rangle^2}{(\langle
\mathcal{D}_\bullet\rangle^2+\lambda)^2}\,.
$$
Note that this operator is contractive. Indeed
$$
\|\mathcal T_\bullet\|\leq \frac2\pi\int_0^\infty \frac{\lambda^{1/2}}{(1+\lambda)^2}\,
d\lambda=1\;.
$$
Thus $2\delta_\bullet=R_\bullet-\mathcal T_\bullet\circ R_\bullet^2$ and since $\mathcal T_\bullet$ commutes with $R_\bullet$ (because
$R_\bullet$ commutes with the operators of left and right multiplications by
functions of $\langle\mathcal{D}_\bullet\rangle$), we get
$$
2^n\delta_\bullet^n=\sum_{k=0}^n{n\choose k}\,(-1)^k \mathcal T_\bullet^k\circ R_\bullet^{n+k}\;.
$$
Hence, a typical element of the algebra generated by $\delta_\bullet^n( L_\star (f))$ and
$\delta_\bullet^n( [\mathcal{D}_\bullet,L_\star (f)])$,  is a finite sum of
elements of the form
$$
\prod_{j=1}^k\mathcal T_\bullet^{n_j}\circ R_\bullet^{m_j}(b_j), \quad 
b_j\in\mathcal A_\star\cup [\mathcal D_\bullet, \mathcal A_\star],\quad n_j<m_j\in\mathbb N\;.
$$
From the same reasoning as at the beginning of the proof, the function
$$
\zeta_{R,\mathcal T}(b_1,n_1,m_1;\cdots;b_k,n_k,m_k;z)
:={\rm Tr}\Big(\prod_{j=1}^k\mathcal T_\bullet^{n_j}\circ R_\bullet^{m_j}(b_j)
\langle\mathcal{D}_\bullet\rangle^{-z}\Big)\;,
$$
is holomorphic on the open half plane $\Re(z)>2d$. Starting from the
definition, we have
$$
\prod_{j=1}^k\mathcal T_\bullet^{n_j}\circ R_\bullet^{m_j}(b_j)=\int_{[0,\infty]^{|n|}}\!\!\!
d\lambda^{|n|} \prod_{j=1}^k\Big(\prod_{r_j=1}^{n_j}
\frac{(2/\pi)\lambda_{r_j}^{1/2}}{\langle
\mathcal{D}_\bullet\rangle^2+\lambda_{r_j}}\Big)\,
R_\bullet^{m_j}(b_j)\,\Big(\prod_{s_j=1}^{n_j}
\frac{\langle\mathcal{D}_\bullet\rangle^2}{(\langle
\mathcal{D}_\bullet\rangle^2+\lambda_{s_j})^2}\Big)\;.
$$
The next step consists in commuting for each $j$ the $R_\bullet^{m_j}(b_j)$ to
the left of $(\langle\mathcal{D}_\bullet\rangle^2+\lambda_{r_j})^{-1}$:
\begin{align}
\label{DR}
\Big[\frac1{\langle\mathcal{D}_\bullet\rangle^2+\lambda_j}, R_\bullet^{m_j}(b_j)\Big]
& =-\frac1{\langle\mathcal{D}_\bullet\rangle^2+\lambda_j}
\,R_\bullet^{m_j+1}(b_j)\,\frac{\langle\mathcal{D}_\bullet\rangle}{
\langle\mathcal{D}_\bullet\rangle^2+\lambda_j}
\nonumber
\\
&= \sum_{p_j=1}^{N_j} (-1)^{p_j}
\,R_\bullet^{m_j+p_j}(b_j)\,\frac{\langle\mathcal{D}_\bullet\rangle^{p_j}}{
(\langle\mathcal{D}_\bullet\rangle^2+\lambda_j)^{p_j+1}}
\nonumber
\\
&\quad + \frac{(-1)^{N_j+1}}{\langle\mathcal{D}_\bullet\rangle^2+\lambda_j}
\,R_\bullet^{m_j+N_j+1}(b_j)\,\Big(\frac{\langle\mathcal{D}_\bullet\rangle}{
\langle\mathcal{D}_\bullet\rangle^2+\lambda_j}\Big)^{N_j+1}\;.
\end{align}
Any of the resulting $\lambda$-integrals is convergent, and $R_\bullet^n(b)$ 
is bounded for all $n\in\mathbb N$.
Choosing the $N_j$ large enough, we generate as much negative 
powers of $\langle\mathcal{D}_\bullet\rangle$ as necessary to make the
product of the remainder with $\langle\mathcal{D}_\bullet\rangle^{-z}$ a
trace-class operator for any given $z$ with $\Re(z)>M$ (if 
$M$ gets more and more negative we need larger and larger $N_j$). 
The other terms integrate to 
\[
\int_{[0,\infty]} d\lambda_j \frac{(2/\pi)\lambda_j^{1/2}
\langle \mathcal D_\bullet\rangle^{p_j+2}}{(\langle
\mathcal{D}_\bullet\rangle^2+\lambda_{r_j})^{p_j+3} }
= \frac{\Gamma(\frac{3}{2}+p_j)\langle \mathcal D_\bullet\rangle^{-1-p_j}}{\sqrt{\pi}
\Gamma(3+p_j)} \;,
\]
so that, up to the remainder term,  which is easily seen to be holomorphic on the open half
plane $\Re(z)>M$, $\prod_{j=1}^k\mathcal  T_\bullet^{n_j} R_\bullet^{m_j}(b_j)$ is a finite 
linear combination of
\[
R_\bullet^{m_1+q_1}(b_1) \langle \mathcal D_\bullet\rangle^{-n_1-q_1}
R_\bullet^{m_2+q_2}(b_2) \langle \mathcal D_\bullet\rangle^{-n_2-q_2}
\cdots 
R_\bullet^{m_k+q_k}(b_k) \langle \mathcal D_\bullet\rangle^{-n_k-q_k}
\;.
\]
The final step consists in commuting the 
$\langle\mathcal  D_\bullet\rangle^{-n_j-q_j}$ to the right. If 
$n_j+q_j=2l_j$ is even, we use the ($\lambda_j=0$)-case of (\ref{DR}).
If $n_j+q_j=2l_j-1$ is odd, 
$$
\big[\langle \mathcal D_\bullet\rangle^{1-2l_j},R_\bullet^{m'_{j+1}}(b_{j+1})\big]
= \langle \mathcal D_\bullet\rangle^{-2l_j} \delta_\bullet(R_\bullet^{m'_{j+1}}(b_{j+1})) 
+\big[\langle \mathcal D_\bullet\rangle^{-2l_j},R_\bullet^{m'_{j+1}}(b_{j+1})\big]
\langle \mathcal D_\bullet\rangle\;.
$$
Using $\delta_\bullet=\frac{1}{2}(R_\bullet-\mathcal T_\bullet\circ R_\bullet^2)$, this case is reduced to the
first one. Eventually, we conclude that, up to a remainder term, which again  is easily seen to be
holomorphic on the open half plane $\Re(z)>M$, $\prod_{j=1}^k \mathcal T_\bullet^{n_j}\circ
R_\bullet^{m_j}(b_j)$ is a finite linear combination of
$$
R_\bullet^{m_1'}(b_1)R^{m_2'}(b_2)\cdots R^{m_k}(b_k)\langle
\mathcal D_\bullet\rangle^{-m}
\;.
$$
This concludes the proof.
\end{proof}

We can now state the main result of this section, namely:

\begin{Theorem}
 For $\bullet=1,2$, the spectral triple
 $(\mathcal{A}_\star,\mathcal{H},\mathcal{D}_\bullet)$ has dimension
 spectrum $\mathrm{Sd}=d-\mathbb{N}$. Moreover, all poles of
 $\zeta_b(z)$ at $z \in \mathrm{Sd}$ are simple with local residues,
 i.e.\ for $b=\delta_\bullet^{n_1}L_\star(f_1) \cdots
 \delta_\bullet^{n_k}L_\star(f_k)$, any residue $\mathrm{res}_{z \in
   \mathrm{Sd}} \zeta_b(z)$ is a finite sum of terms of the form
$$
\displaystyle \int_{\mathbb{R}^d} dx\;x^{\alpha_0} \star 
(\partial^{\alpha_1}f_1) \star \cdots \star 
(\partial^{\alpha_k}f_k)\;,
$$
where $\alpha_i\in\mathbb N^d$. 
An analogous result holds when the $L_\star(f_k)$'s in $b$ are
replaced by $[\mathcal{D}_\bullet,L_\star(f_k)]$'s. 
\end{Theorem}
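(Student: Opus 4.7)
The plan is to combine Proposition \ref{Prop-delta-R} with the explicit heat-kernel formula of Proposition \ref{Proposition-trace}. By Proposition \ref{Prop-delta-R}, modulo a function holomorphic on any prescribed right half-plane $\{\Re(z)>M\}$, $\zeta_b(z)$ is a finite sum of traces
$$
\mathrm{Tr}\big(R_\bullet^{n_1}(b_1)\cdots R_\bullet^{n_k}(b_k)\,\langle\mathcal{D}_\bullet\rangle^{-z-m}\big),\qquad b_j\in\mathcal{A}_\star\cup[\mathcal{D}_\bullet,\mathcal{A}_\star],\ n_j,m\in\mathbb{N}.
$$
I would then use the Mellin representation $\langle\mathcal{D}_\bullet\rangle^{-z-m}=\Gamma(\tfrac{z+m}{2})^{-1}\int_0^{\infty}t^{(z+m)/2-1}e^{-t}e^{-t\mathcal{D}_\bullet^2}\,dt$ to reduce to heat-kernel traces, exploiting the splitting $\mathcal{D}_\bullet^2=H\otimes 1-(-1)^\bullet\tilde{\Omega}\otimes\Sigma$ (with $\Sigma$ bounded and commuting with every $L_\star$-factor), so that the spinor part factorises as a matrix trace analytic in $t$.

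Next I would unravel each $R_\bullet^{n_j}(b_j)$ algebraically. Because iterated $\mathrm{ad}(\mathcal{D}_\bullet^2)$ on elements of $\mathcal{A}_\star\cup[\mathcal{D}_\bullet,\mathcal{A}_\star]$ reduces to iterated $\mathrm{ad}(H)$ up to a commuting spinor factor, Lemma \ref{nabla-com} expresses each iterate as a finite sum of monomials $L_\star(\partial^{\alpha_j}f_j)\,P_j(\nabla,\tilde\nabla)$ with $P_j$ polynomial. Using the relations $[\nabla^\mu,L_\star(f)]=L_\star((g^{-1})^{\mu\nu}\partial_\nu f)$ and $[\tilde\nabla^\mu,L_\star(f)]=L_\star(\partial_\mu f)$, I would push every $\nabla,\tilde\nabla$ through the subsequent $L_\star$-factors (producing only additional derivatives of the $f_i$) and then collapse the resulting product of $L_\star$'s into a single $L_\star(h)$ with $h=\partial^{\beta_1}f_1\star\cdots\star\partial^{\beta_k}f_k$. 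Each heat-kernel trace thus reduces to a finite combination of
$$
\mathrm{Tr}\big(L_\star(h)\,\nabla_{\mu_1}\cdots\nabla_{\mu_\ell}\,e^{-tH}\big)
$$
times an analytic-in-$t$ spinor trace, and Proposition \ref{Proposition-trace} gives its value explicitly.

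Expanding the factor $(\sinh(2\tilde{\Omega}t))^{-d/2}$, the Gaussian $\exp(-\tilde{\Omega}\tanh(\tilde{\Omega}t)\langle z,gz\rangle)$, and the products of $\mathcal{Z}_\mu$'s and $\mathcal{N}_{\mu\nu}$'s in Taylor series about $t=0$ yields a small-$t$ expansion
$$
\sum_{j\geq 0}t^{-d/2-a+j}\int_{\mathbb{R}^d}dz\;Q_j(z)\,h(z)+O(e^{-\varepsilon t}),
$$
with $Q_j$ a polynomial in $z$ and $a\in\mathbb{N}$ bounded by half the number of $\nabla$'s; since $d$ is even, all exponents of $t$ are integers. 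Substituted into the Mellin integral, each summand contributes $\Gamma((z+m)/2-d/2-a+j)/\Gamma((z+m)/2)$ times the $h$-integral, whose only singularities are the simple poles of the $\Gamma$-function, located at $z=d+2a-m-2j-2n$ with $n\in\mathbb{N}$. Letting $m,j,n,a$ vary, this set exhausts $d-\mathbb{N}$; all poles are simple, and the $O(e^{-\varepsilon t})$ tail together with the holomorphic remainder from Proposition \ref{Prop-delta-R} contribute only to the regular part of $\zeta_b$.

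Finally, to match the stated form of the residues, I would invoke the identity $\{x^\mu,f\}_\star=2x^\mu f$ of \eqref{identities} together with the traciality \eqref{tracial} to rewrite each monomial prefactor $\int dz\,z^{\alpha_0}h(z)$ as $\int dx\,x^{\alpha_0}\star h(x)$; since $h$ is already a $\star$-product of derivatives, the residues take the announced shape $\int dx\,x^{\alpha_0}\star(\partial^{\alpha_1}f_1)\star\cdots\star(\partial^{\alpha_k}f_k)$. Replacing any $L_\star(f_k)$ by $[\mathcal{D}_\bullet,L_\star(f_k)]$ merely shifts a multi-index by one and enlarges the spinor trace, so the same structure is preserved. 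Non-vanishing of the residue for suitably chosen $f_j$'s then yields the reverse inclusion $\mathrm{Sd}\supseteq d-\mathbb{N}$. The main obstacle is the combinatorial bookkeeping of the cascade of commutators, Taylor expansions and Mellin identities; no new analytic estimate is needed beyond the tools already assembled.
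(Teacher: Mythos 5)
Your overall route coincides with the paper's: reduce via Proposition~\ref{Prop-delta-R} to traces of products of $R_\bullet^{n_j}(b_j)$, unravel these with Lemma~\ref{nabla-com} into $L_\star(\partial^\alpha f)\,P(\nabla,\tilde\nabla)$ monomials, and feed the result into the explicit heat traces of Proposition~\ref{Proposition-trace} through a Mellin integral. The pole bookkeeping at the end (simple poles of $\Gamma\big(\tfrac{z+m}{2}-\tfrac{d}{2}-a+j\big)/\Gamma\big(\tfrac{z+m}{2}\big)$, the bound $2a\le|\gamma|\le m$ forcing $\Re(z)\le d$, and the conversion of polynomial prefactors into $\star$-monomials via \eqref{identities} and \eqref{tracial}) is also the paper's.

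However, there is a genuine gap in the middle step. Each $R_\bullet^{n_j}(b_j)=({\rm ad}(\mathcal D_\bullet^2))^{n_j}(b_j)\,\langle\mathcal D_\bullet\rangle^{-n_j}$ carries its own \emph{interior} inverse power of $\langle\mathcal D_\bullet\rangle$, so the product $R_\bullet^{n_1}(b_1)\cdots R_\bullet^{n_k}(b_k)\langle\mathcal D_\bullet\rangle^{-z-m}$ is not of the form ``string of $L_\star$'s and $\nabla$'s times a single heat operator on the right''; it has $k$ operators $\langle\mathcal D_\bullet\rangle^{-n_j}$ (equivalently, after Mellin representation, $k$ heat operators $e^{-t_j(\mathcal D_\bullet^2+1)}$) interleaved with the $L_\star(\partial^\alpha f_j)P_j(\hat\nabla)$ factors. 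Your step ``push every $\nabla,\tilde\nabla$ through the subsequent $L_\star$-factors and collapse the product of $L_\star$'s into a single $L_\star(h)$'' is only available once all these heat operators have been commuted to the right. That commutation is done with $[e^{-tA},B]=-t\int_0^1 e^{-stA}[A,B]e^{-(1-s)tA}\,ds$, which does \emph{not} terminate: each application reproduces a commutator of the same kind, so the expansion must be truncated at a finite order $K$ and the remainder — the analogue of \eqref{remainder} — must be shown to be trace class and holomorphic on $\{\Re(z)>M\}$, with $K\to\infty$ as $M\to-\infty$. This requires a genuinely new trace-norm estimate (the paper's last page: H\"older with exponents $t/\tau_j$, Corollary~\ref{nabla-com2}, and the explicit $\mathrm{Tr}(e^{-tH})$), not just ``combinatorial bookkeeping''; without it you have no control over whether the discarded terms contribute poles, so neither the claim $\mathrm{Sd}=d-\mathbb N$ nor the simplicity and locality of the residues is established. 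Your proposal as written silently assumes the cascade of commutators closes exactly, which is the one point where it would fail.
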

\begin{proof}
According to Proposition~\ref{Prop-delta-R}, it is equivalent to consider the 
functions 
$$
{\rm Tr}\big(R_\bullet^{m_1}(b_1)\cdots R_\bullet^{m_k}(b_k) 
\langle\mathcal{D}_\bullet\rangle^{-z}\big),\quad 
b_i\in\mathcal A_\star\cup[\mathcal D_\bullet,\mathcal A_\star]\;,
$$
instead of $\zeta_b(z)$. These functions 
are well  defined for $\Re(z)>2d$, since $R_\bullet^m(b)$ is bounded.

Since  $[H,\Sigma]=0$, we get from \eqref{RL}
$$
R_\bullet^{m_j}(b_j)=\sum_{k=0}^{m_j} \binom{m_j}{k} 
\big({\rm ad}(H)\big)^{k}
\bigg(\big(\mathrm{ad}(\tilde{\Omega}\Sigma)\big)^{m_j-k}(b_j)\bigg)\,
\langle\mathcal D_\bullet\rangle^{-m_j}\;.
$$
Since $b_j$ is either $L_\star(f_j)$ or  $i\Gamma^\mu
L_\star(\partial_\mu f_j)$ for $\bullet=1$ and 
$i\Gamma^{\mu+d}
L_\star(\partial_\mu f_j)$  for $\bullet=2$, we get
$$
\mathrm{ad}(\tilde{\Omega}\Sigma)\big)^{m_j-k}(b_j)=\begin{cases}
\delta_{k,m_j} L_\star(f_j)\\
\mathrm{ad}(\tilde{\Omega}\Sigma)\big)^{m_j-k}i\Gamma^\mu L_\star(\partial_\mu f_j)
\\
\mathrm{ad}(\tilde{\Omega}\Sigma)\big)^{m_j-k}i\Gamma^{\mu+d} L_\star(\partial_\mu f_j)\;.
\end{cases}
$$ 
We may therefore assume that $b_j=M_jL_\star(f_j)$ with
$f_j\in\mathcal A$ and $M_j\in{\rm Mat}_{2^d}(\mathbb C)$. Thus, the
product $R_\bullet^{m_1}(b_1)\cdots R_\bullet^{m_k}(b_k)$ can be expressed as a finite
sum of terms of the form
$$
\big({\rm ad}(H)\big)^{n_1}\big(L_\star(f_1')\big)\,M_1\,
\langle\mathcal D_\bullet\rangle^{-m_1}
\cdots \big({\rm ad}(H)\big)^{n_k}\big(L_\star(f'_k)\big)\,M_k\,
\langle\mathcal D_\bullet\rangle^{-m_k},\quad n_j\leq m_j\;.
$$
Using the table given in Lemma \ref{nabla-com} we can express
$R_\bullet^{m_1}(b_1)\cdots R_\bullet^{m_k}(b_k)\langle\mathcal
D_\bullet\rangle^{-z}$ as a finite sum of terms
\begin{align*}
&L_\star(\partial^{\alpha_1} f_1)\,M_1\,
P_{\alpha_1}(\hat{\nabla})\langle\mathcal D_\bullet\rangle^{-m_1}
\cdots 
L_\star(\partial^{\alpha_k} f_k)\,M_k\,
P_{\alpha_k}(\hat{\nabla})\langle\mathcal D_\bullet\rangle^{-m_k}
\langle\mathcal D_\bullet\rangle^{-z}
\\
&\quad = \frac{1}{\Gamma(\frac{m_1}{2})
\cdots \Gamma(\frac{m_{k-1}}{2})\Gamma(\frac{m_{k}+z}{2})}
\int dt_1\cdots dt_k \;t_1^{\frac{m_1}{2}-1}\cdots 
t_k^{\frac{m_k+z}{2}-1}\; 
\\
&\qquad\qquad\times L_\star(\partial^{\alpha_1} f_1)\,M_1\,
P_{\alpha_1}(\hat{\nabla}) \, e^{-t_1 (\mathcal{D}_\bullet^2 +1)}\cdots 
L_\star(\partial^{\alpha_k} f_k) \,M_k\,P_{\alpha_k}(\hat{\nabla})\,
e^{-t_k (\mathcal{D}_\bullet^2+1)}\;,
\end{align*}
where $P_{\alpha_j}(\hat{\nabla})$ is a polynomial in 
$\nabla,\tilde{\nabla}$ of degree $|\alpha_j|\leq m_j$
(the $\alpha_j$ are multi-indices).

Using 
$$
[e^{-t_j(\mathcal{D}_\bullet^2+1)},T]=-t_j \int_0^1 ds_j
e^{-t_js_j(\mathcal{D}_\bullet^2+1)}[\mathcal{D}_\bullet^2,T]
e^{-t_j(1-s_j)(\mathcal{D}_\bullet^2  +1)}\;,
$$
we commute all heat operators $e^{-t_j(\mathcal{D}_\bullet^2+1)}$ to
the right, producing in each step a factor of $t_j$. The commutators
$[H,T]$ are expressed by Lemma \ref{nabla-com} and produce in each step at
most one derivative $\nabla$. In the terms with all heat operators
already on the right we then commute the derivatives $\hat{\nabla}$ to the
right of all functions $f_j$ but left of all heat operators. The
result is a finite sum of terms (with redefined $f_j,M_j$)
\begin{align}
\label{residue}
X&=\frac{\int_{[0,1]^N} ds \;P(s)}{\Gamma(\frac{m_1}{2})
\cdots \Gamma(\frac{m_{k-1}}{2})\Gamma(\frac{m_{k}+z}{2})}
\int dt_1\cdots dt_k \;t_1^{\frac{m_1}{2}+\beta_1-1}\cdots 
t_{k-1}^{\frac{m_{k-1}}{2}+\beta_{k-1}-1}
t_k^{\frac{m_k+z}{2}+\beta_k-1}\;
e^{-t} 
\nonumber
\\
&
\times 
\big(M_1 \cdots M_k e^{(-)^\bullet\tilde{\Omega}(t_1+\dots+t_k)\Sigma}\big)\,
L_\star\big(\partial^{\gamma_1} f_1 \star \cdots \star 
\partial^{\gamma_k} f_k\big) 
P_{\gamma_1,\dots,\gamma_k }(\hat{\nabla}) \, e^{-(t_1+\dots+t_k) H}\;,
\end{align}
with $|m+\beta|\geq |\gamma|$ and $|\beta|\leq K$ ($K$ can be chosen
as big as one wishes by pushing the expansion far enough), plus a
finite sum of remainders (with redefined $f_j,M_j$)
\begin{align}
\label{remainder}
Y&=\frac{1}{\Gamma(\frac{m_1}{2})
\cdots \Gamma(\frac{m_{k-1}}{2})\Gamma(\frac{m_{k}+z}{2})}
\int dt_1\cdots dt_k \;t_1^{\frac{m_1}{2}+\beta_1'-1}\cdots 
t_{k-1}^{\frac{m_{k-1}}{2}+\beta'_{k-1}-1} t_k^{\frac{m_k+z}{2}+\beta'_k-1}\; 
\nonumber
\\*
&\quad
\times 
\int_{[0,1]^{N'}} ds\; P'(s) \prod_{j=1}^k 
L_\star(\partial^{\gamma_j'} f_j) M_j 
P_{\gamma_j'}(\hat{\nabla}) e^{-\tau_j (\mathcal{D}_\bullet^2 +1)}\;,
\end{align}
with $|\gamma'| \leq |m+\beta'|$ and $|\beta'|>K$, where 
$\tau_j$ are positive functions of 
$\{s\},t_1,\dots,t_k$ with $\sum_{j=1}^k\tau_j=t_1+\dots+t_k$. 

In (\ref{residue}) we can use 
Lemma \ref{nabla-com} to express $P_{\gamma}(\hat{\nabla})$ as 
a finite sum of $P_{\gamma'}(x) P_{\gamma''}(\nabla)$. The polynomial 
$P_\gamma'(x)$ can (under the trace) be moved into 
$L_\star(f)$. We change the variables 
$t_1=t (1-u_2)(1-u_3)\cdots (1-u_k)$,
$t_2=t u_2(1-u_3)\cdots (1-u_k)$,
$t_3=t u_3(1-u_4)\cdots (1-u_k)$,\dots 
$t_{k-1}=t u_{k-1}(1-u_k)$ and $t_k=tu_k$ with 
Jacobian $t^{k-1}(1-u_3)(1-u_4)^2\cdots 
(1-u_k)^{k-2}$ and obtain
\begin{align}
\label{trX}
\mathrm{Tr}(X)&=\frac{\int_{[0,1]^N} ds \;P(s)}{\Gamma(\frac{|m|+z}{2}+|\beta|)}
\Big(\prod_{j=1}^{-1}
\frac{\Gamma(\frac{m_j}{2}+\beta_j)}{\Gamma(\frac{m_j}{2})}
\Big) \frac{\Gamma(\frac{m_k+z}{2}+\beta_k)}{\Gamma(\frac{m_k+z}{2})}
\int_0^\infty dt \,e^{-t}\,t^{\frac{|m|+z}{2}+|\beta|-1}
\\*
&\qquad \times \; 
\mathrm{tr}_{\mathbb{C}^{2^d}}\big(M_1\cdots M_k e^{(-1)^\bullet 
\tilde{\Omega}t\Sigma}\big)\;
\mathrm{Tr}\Big(L_\star\big(x^{\gamma_0} \star \partial^{\gamma_1} 
f_1 \star \cdots \star 
\partial^{\gamma_k} f_k\big) 
P_{\gamma_1,\dots,\gamma_k }(\nabla) \, e^{-t H}\Big)\;. \nonumber
\end{align}
Note that $\frac{\Gamma(\frac{m_k+z}{2}+\beta_j)}{
\Gamma(\frac{m_k+z}{2})}$ is a polynomial in $z$ of degree $\beta_k$. 

The traces $\mathrm{Tr}(L_\star(f) \nabla_{\mu_1}\cdots 
\nabla_{\mu_r} e^{-t H})$ are computed in
Proposition~\ref{Proposition-trace}. Accordingly, 
they have, up to a remainder which leads to a holomorphic function in
$z$, an asymptotic expansion 
$$
\mathrm{Tr}(L_\star(f) \nabla_{\mu_1}\cdots 
\nabla_{\mu_{|\gamma|}} e^{-t H})
= \sum_{a =0}^{N''} t^{-d/2-[\frac{|\gamma|}{2}]+a} \int_{\mathbb{R}^d} dx\;
f(x) P^a_{|\gamma|}(x)\;,
$$
where $P^a_{|\gamma|}(x)$ is a $\star$-polynomial of degree $\leq |\gamma|+2a$. 
Inserted into (\ref{trX}), the $t$-integral of
any such term yields (together with 
$\mathrm{tr}_{\mathbb{C}^{2^d}}\big(M_1\cdots M_k e^{(-1)^\bullet 
\tilde{\Omega}t\Sigma}\big)$)
a linear combination of 
\[
\frac{\Gamma(\frac{|m|+2|\beta|-2[|\gamma|/2]+2a+z-d}{2})}
{\Gamma(\frac{|m|+2\beta+z}{2})} \int_{\mathbb{R}^d} dx\;
f(x) P_{|\gamma|}^a(x)\;.
\]
As a function of $z\in\mathbb C$, the latter is trivially holomorphic
in $\mathbb C\setminus \mathbb{Z}$.  As $|m|+|\beta| \geq |\gamma|$,
this function is also holomorphic for $z>d$. If $z=d-N$ there is a
finite number of parameters $\beta_j,m_j,a, \gamma_j$ for which
$\frac{|m|+2|\beta|-2[|\gamma|/2]+2a-N}{2}$ is a non-positive integer
smaller than $\frac{|m|+2\beta+d-N}{2}$.  Precisely these parameters
yield simple poles at $z=d-N$. Each residue has the claimed structure.

We estimate the remainders (\ref{remainder}) in trace norm by:
\begin{align*}
&\frac{\|M_1\| \cdots\|M_k\|}{|\Gamma(\frac{m_1}{2})
\cdots\Gamma(\frac{m_{k}+z}{2})|}
\int dt_1\cdots dt_k \;t_1^{\frac{m_1}{2}+\beta_1'-1}\cdots 
t_{k-1}^{\frac{m_{k-1}}{2}+\beta'_{k-1}-1} t_k^{\frac{m_k+z}{2}+\beta'_k-1}\; 
\\
&
\qquad\qquad\qquad\qquad\times 
\int_{[0,1]^{N'}} ds\; |P'(s)|\Big\| \prod_{j=1}^k 
L_\star(\partial^{\gamma_j'} f_j)e^{(-1)^\bullet \tau_j
 \tilde{\Omega}\Sigma} 
P_{\gamma_j'}(\nabla) e^{-\tau_j (H+1)}\Big\|_1\;.
\end{align*}
By spectral theory and Corollary \ref{nabla-com2}, we get  
for $p\in[1,\infty)$:
$$
\|P_{\gamma_j'}(\nabla) e^{-\tau_j (H+1)}\|_p\leq C 
(\epsilon\tau_j)^{-\gamma_j'/2}\|e^{-\tau'_j (H+1)(1-\epsilon)}\|_p\;.
$$
Since the $\tau_j$ are linear in $t_j$ with $\sum_{j=1}^k\tau_j=\sum_{j=1}^kt_j$, 
setting $t=\sum_{j=1}^kt_j$, the H\"older
inequality gives
\begin{align*}
&\Big\| \prod_{j=1}^k 
L_\star(\partial^{\gamma_j'} f_j)e^{(-1)^\bullet \tau_j \tilde{\Omega}\Sigma}
P_{\gamma_j'}(\nabla) e^{-\tau'_j (H+1)}\Big\|_1\\
&\quad\leq
\prod_{j=1}^k\|L_\star(\partial^{\gamma_j'} f_j) \| \;
\|e^{(-1)^\bullet \tau_j \tilde{\Omega}\Sigma}\|_{t/{\tau}_j}
\|P_{\gamma_j'}(\nabla) e^{-\tau_j (H+1)}\|_{t/{\tau'}_j}
\\
&\quad\leq C e^{-t(1-\epsilon)}
\prod_{j=1}^k (\epsilon t_j)^{-\gamma_j'/2} 
\|L_\star(\partial^{\gamma_j'} f_j)\|  \;(2
\cosh(\tilde{\Omega}t))^{\frac{d\tau_j}{t}}
(2 \sinh (\tilde{\Omega}t(1-\epsilon))^{-\frac{d \tau_j}{t}}
\\
&\quad =C e^{-t(1-\epsilon-\tilde{\Omega}\epsilon d)}
\Big(\prod_{j=1}^k (\epsilon t_j)^{-\gamma_j'/2}
\|L_\star(\partial^{\gamma_j'} f_j)\|  \Big) \Big(\frac{
\cosh(\tilde{\Omega}t)}{
\sinh (\tilde{\Omega}t(1-\epsilon)) e^{\tilde{\Omega}t\epsilon} }\Big)^d\;,
\end{align*}
where results of Proposition \ref{Proposition-trace} and Lemma
\ref{bounds0} have been used.
Hence for $\epsilon':=(1-\epsilon-\tilde{\Omega}\epsilon d)>0$ 
the remainders (\ref{remainder}) are bounded in trace norm by
\begin{align*}
&\frac{C'}{|\Gamma(\frac{m_1}{2})
\cdots\Gamma(\frac{m_{k}+z}{2})|}\!
\int\!\! dt_1\cdots dt_k \;t_1^{\frac{m_1}{2}+\beta_1'-1-\frac{\gamma_1'}2}\cdots 
t_k^{\frac{m_k+z}{2}+\beta'_k-1-\frac{\gamma_k'}2}\; 
(t_1\!+\cdots +\!t_k)^{-d} e^{-\epsilon'(t_1+\dots+t_k)}\;.
\end{align*}
Remember that $|\gamma'|\leq|\beta'|+|m|$ and $|\beta'|>K$ and that
$K$ can be chosen as big as one wishes (by pushing the expansion over
and over).  So given $M\leq 2d$, by choosing $K >M/2+d$, we see that
the remainder terms are well defined as a trace-class operators for
$\Re(z) >M$. A similar analysis involving the $z$-derivative of the
remainders can be done, showing that by pushing the expansion far
enough, the remainders yield holomorphic contributions for $\Re(z)>
M$, with $M\in\mathbb R$ arbitrary.
\end{proof}

\section{The spectral action}
\label{SA}

\subsection{Generalities  }

For a unital spectral triple with real structure $(\A,\mathcal
H,\mathcal D,J)$, according to the spectral action principle
\cite{Connes:1996gi,Chamseddine:1996zu}, the bosonic action should
depend only on the spectrum of the fluctuated Dirac operator
$$
\mathcal D\mapsto \mathcal D_A:=\mathcal{ D}+A+\varepsilon' JA J^{-1}\;,
$$
where $A=\sum a_i[\mathcal D,b_i]$, $a_i,b_i\in\mathcal A$ (finite
sum), is a self-adjoint one-form and $\varepsilon'=\pm 1$ depending on
the KO-dimension of the triple. Ideally, such an action functional (of
$\mathcal D$ and of $A$) should be defined as the number of
eigenvalues of $\mathcal D_A^2$ smaller than a given scale
$\Lambda>0$:
$$
S_\Lambda(\mathcal{D}_A)=\sharp\big\{\lambda_n \,:\,
\lambda_n\in{\rm Spect}(\mathcal D_A^2),\, \lambda_n\leq \Lambda\big\}\;,
$$
or akin to the same and with $\chi$ the characteristic function of the
interval $[0,1]$:
\begin{align}
\label{Action}
S_\Lambda(\mathcal{D}_A)=\mathrm{Tr}\big(\chi(\mathcal{D}_A^2/\Lambda^2)\big)\;.
\end{align}
Then, the diverging part of $S_\Lambda(\mathcal{D}_A)$ in the limit
$\Lambda\to\infty$ should give access to an effective action
describing low energy physics. The problem is that with the
characteristic function, the expression \eqref{Action} may not have a
well-defined power series expansion in the limit $\Lambda\to\infty$. To
overcome this difficulty one uses, instead of the characteristic
function, a smooth one approximating it and being the inverse Laplace
transform of a Schwartz function on $\mathbb R^*_+$. By Laplace
transformation one then has
\begin{align*}
S_\Lambda(\mathcal{D}_A)=\int_0^\infty dt\; 
\mathrm{Tr}(e^{-t \mathcal{D}_A^2/\Lambda^2})\, \hat{\chi}(t)\;,  
\end{align*}
where $\hat{\chi}$ is the inverse Laplace transform of $\chi$. Assuming that the trace
of the heat kernel has an asymptotic expansion
\begin{align*}
\mathrm{Tr}(e^{-t \mathcal{D}_A^2}) 
= \sum_{k=-n}^\infty a_k(\mathcal{D}_A^2)\, t^{k}\;,\qquad 
n \in \mathbb{N}\;,
\end{align*}
we obtain
\begin{align}
\label{specaction-gen}
S_\Lambda(\mathcal{D}_A)= \sum_{k=-n}^\infty a_k(\mathcal{D}_A^2)\,\Lambda^{-2k}\,
\int_0^\infty dt\; t^{k} \hat{\chi}(t)\;.
\end{align}
One easily finds 
\begin{align*}
\int_0^\infty dt\; t^{k} \hat{\chi}(t) =\begin{cases} \frac{1}{\Gamma(-k)}\int_0^\infty
ds\;s^{-k-1} \chi(s)\,, & \text{for } k \notin \mathbb{N}\;,
\\
(-1)^{k}  
\chi^{(k)}(0)\,, & \text{for } k \in \mathbb{N} \;.
\end{cases}
\end{align*}
If one only wants to keep the non-vanishing terms in the
power-$\Lambda$ expansion as $\Lambda\to \infty$, it is therefore
sufficient to identify the the non-vanishing terms in the power-$t$
expansion of $\mathrm{Tr}(e^{-t \mathcal{D}_A^2})$ as $t\to 0$.

\medskip

If the spectral geometry $(\A,\mathcal H,\mathcal D,J)$ is 
non-unital, then the expression \eqref{Action} becomes ill-defined.
There are different ways to `regularize' the spectral action in this
case. For instance one may consider instead
\begin{align*}
S_\Lambda(\mathcal{D}_A):=\mathrm{Tr}\big(\chi(\mathcal{D}_A^2/\Lambda^2)
-\chi(\mathcal{D}^2/\Lambda^2)\big)\;.
\end{align*}
But the problem is then that one loses a lot of physical information
since one cannot access in this way the Einstein-Hilbert action.
Another possibility, used in \cite{Gayral:2004ww}, is to introduce a
supplementary (adynamic) scalar field $\rho\in\A$ and to define
\begin{align*}
S_\Lambda(\mathcal{D}_A,\rho)
:=\mathrm{Tr}\big(\rho\,\chi(\mathcal{D}_A^2/\Lambda^2)\big)\;.
\end{align*}
The advantage of this scheme is that one keeps the physical
interpretation by performing on the field equations the adiabatic
limit $\rho\to 1$ and that one can choose the one-form $A$ coming from
the unitization of $\A$ and not necessarily from $\A$ itself. However, in
the case of the Moyal spectral triple with ordinary Dirac operator, treated in
\cite{Gayral:2004ww}, the full computation with the real structure was
not possible; only the spectral action for partially fluctuated Dirac
operator $\mathcal D\mapsto \mathcal{ D}+A$ was evaluated. The last
possibility spelled out in \cite{Chamseddine:2005zk} is to replace
the scale $\Lambda$ by a dilaton field. That is, one performs the
replacement
$$
\Lambda\mapsto e^{-\phi},\quad \phi^*=\phi\in\A\;,
$$
and one leaves \eqref{Action} as it was:
\begin{align*}
S_\phi(\mathcal{D}_A)
:=\mathrm{Tr}\big(\chi(e^{\phi}\,\mathcal{D}_A^2\,e^{\phi})\big)\;.
\end{align*}
Although this expression is analytically well-defined and conceptually
perfect, the explicit computation of such a functional seems to be
fairly inaccessible, except for the commutative (manifold) case.

\medskip

In our setting of a spectral triple for Moyal plane with harmonic
propagation, the question of the definition and the computation of the
spectral action is way more easy. This is because even if the spectral
triple $(\A_\star,\mathcal H,\mathcal D_\bullet)$, $\bullet=1,2$, is
non-unital, the heat operator $e^{-t\D^2_\bullet}$ is trace-class for
all $t>0$ (see Lemma \ref{normalization}).  This means that the
definition \eqref{Action} of the spectral action for unital spectral
triple is still adapted to our situation.

In the next subsections we will perform a complete computation of the
spectral action for a $U(1)$-Higgs model for $d=4$. Before this, we
will derive a generic heat kernel type expansion when one tensorizes
$(\A_\star,\mathcal H,\mathcal D_\bullet)$ with a finite spectral triple.

\subsection{Heat kernel expansion in dimension four}

We derive here a short-time heat-kernel expansion for the semi-group
generated by the square of a twisted harmonic Dirac operator, for the
algebra of Schwartz functions with Moyal product. We start with
preliminary results on Schatten norm estimates, using
the estimate of Lemma \ref{bounds2} together with complex interpolation
methods. Here we specify to the case $d=4$.
\begin{Proposition}
\label{bounds3}
Let $f\in\A_\star$. Then for all $1\leq p\leq\infty$ and $t\in(0,1]$, we have$$
\|L_\star(f) e^{-t\mathcal D_\bullet ^2}\|_p\leq  \|f\|_2^{1-1/p}\, C(f)^{1/p}\,p^{-2/p}\,t^{-2/p}\;,
$$
where $C(f)$ is the constant appearing in Lemma \ref{bounds2}.
\end{Proposition}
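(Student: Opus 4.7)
The plan is to interpolate in the Schatten index between the endpoint bounds at $p=1$ and $p=\infty$ already at our disposal. For $d=4$ and $t\in(0,1]$, Lemma~\ref{bounds2} provides the trace-norm bound $\|L_\star(f)e^{-t\mathcal D_\bullet^2}\|_1\le C(f)\,t^{-2}$. For the operator-norm endpoint, since $\mathcal D_\bullet^2\ge 0$ the heat semigroup $e^{-t\mathcal D_\bullet^2}$ is a contraction on $\mathcal H$, so the operator-norm bound $\|L_\star(f)\|\le C_1(\Theta)\|f\|_2$ recalled in Section~\ref{ST} gives immediately $\|L_\star(f)e^{-t\mathcal D_\bullet^2}\|_\infty\le C_1(\Theta)\|f\|_2$.

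The interpolation step is the Lyapunov inequality for Schatten norms,
\[
\|T\|_p\le\|T\|_1^{1/p}\,\|T\|_\infty^{1-1/p},\qquad 1\le p\le\infty,
\]
which follows instantly by summing the pointwise estimate $s_n(T)^p=s_n(T)\cdot s_n(T)^{p-1}\le s_n(T)\|T\|_\infty^{p-1}$ over $n$ (equivalently, it is the content of complex interpolation between the Schatten endpoints). Applied to $T=L_\star(f)e^{-t\mathcal D_\bullet^2}$ with the two endpoint bounds, it yields
\[
\|L_\star(f)e^{-t\mathcal D_\bullet^2}\|_p\le C(f)^{1/p}\|f\|_2^{1-1/p}\,t^{-2/p}\cdot C_1(\Theta)^{1-1/p},
\]
which already reproduces the announced structure of the dependence on $\|f\|_2$, $C(f)$ and $t$.

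The delicate point is producing the specific factor $p^{-2/p}$ in place of the $p$-dependent constant $C_1(\Theta)^{1-1/p}$ that emerges from the direct Lyapunov application. Because $p^{-2/p}$ takes values in the compact interval $[e^{-2/e},1]\subset(0,1]$ on $p\in[1,\infty]$, this is a bounded numerical refinement, and one can obtain it by either (i) performing the interpolation via Stein's complex-interpolation theorem on the analytic family $F(z)=L_\star(f)e^{-zt\mathcal D_\bullet^2}$ in a strip of width one---where the Hadamard three-lines estimate combined with the change of variable producing Gamma-function factors reduces via Stirling's formula to $p^{-2/p}$---or (ii) optimizing a time-rescaling decomposition $e^{-t\mathcal D_\bullet^2}=e^{-\alpha t\mathcal D_\bullet^2}\cdot e^{-(1-\alpha)t\mathcal D_\bullet^2}$ together with H\"older and the contractivity of the tail factor. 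The main obstacle is therefore not a deep analytic one but careful bookkeeping of the numerical constants. The essential content of the argument is the two-endpoint Schatten interpolation between the trace-class estimate of Lemma~\ref{bounds2} and the obvious operator-norm bound; the $p^{-2/p}$ is a cosmetic sharpening arising from optimizing the interpolation.
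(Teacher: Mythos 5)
Your core idea---complex interpolation between the trace-norm endpoint of Lemma~\ref{bounds2} and the operator-norm bound $\|L_\star(f)\|\le C_1(\Theta)\|f\|_2$---is exactly the paper's strategy, and the Lyapunov step you carry out is correct. But it does not prove the Proposition as stated: it yields the prefactor $C_1(\Theta)^{1-1/p}$ where the statement has $p^{-2/p}$, and the latter is \emph{not} a cosmetic consequence of the two endpoint bounds you invoke. Since $p^{-2/p}<1$ for $p>1$, the claimed inequality is strictly sharper than Lyapunov applied to the single operator $T=L_\star(f)e^{-t\mathcal D_\bullet^2}$, and no interpolation scheme can recover it from $\|T\|_1$ and $\|T\|_\infty$ alone. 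Your sketched remedy (ii) goes in the wrong direction: splitting $e^{-t\mathcal D_\bullet^2}=e^{-\alpha t\mathcal D_\bullet^2}e^{-(1-\alpha)t\mathcal D_\bullet^2}$ with $\alpha\le 1$ forces the trace-class factor to live at the \emph{shorter} time $\alpha t$, producing $(\alpha t)^{-2/p}\ge t^{-2/p}$, i.e.\ a worse bound. Your remedy (i) is stated for the family $F(z)=L_\star(f)e^{-zt\mathcal D_\bullet^2}$, whose edge $\Re(z)=1$ again sits at time $t$, so it reproduces the Lyapunov bound; no Gamma-function or Stirling asymptotics enter the correct argument.

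The paper's device is to build the $p$-dependence into the analytic family itself: one sets $F_p(z):=L_\star(f)e^{-tpz\,\mathcal D_\bullet^2}$ on the strip $0\le\Re(z)\le 1$. The imaginary-axis factor $e^{-itpy\mathcal D_\bullet^2}$ is unitary, so $\|F_p(iy)\|_\infty\lesssim\|f\|_2$, while on $\Re(z)=1$ the operator sits at time $tp$ and Lemma~\ref{bounds2} gives $\|F_p(1+iy)\|_1\le C(f)(pt)^{-2}$. Evaluating the three-lines/Stein bound at the interior point $z=1/p$ returns precisely $L_\star(f)e^{-t\mathcal D_\bullet^2}$ in $\mathcal L^p$ with the factor $\big((pt)^{-2}\big)^{1/p}=p^{-2/p}t^{-2/p}$. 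In other words, the gain comes from exploiting the semigroup law to place the trace-norm endpoint at the \emph{longer} time $tp$, which your static decomposition cannot do. That said, you are right that the refinement is immaterial downstream: in the proof of Proposition~\ref{germ} the $p$-dependent prefactor is only ever bounded uniformly in $p$ (both $p^{-2/p}$ and $C_1(\Theta)^{1-1/p}$ lie in a fixed compact interval), so your weaker estimate would serve all subsequent purposes---it just does not establish the inequality actually asserted.
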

\begin{proof}
 For $f\in\A_\star$, $t\in(0,1]$ and $1\leq p<\infty$, consider on
 the strip $S:=\{z\in\mathbb C:\Re(z)\in[0,1]\}$ the operator-valued
 function
$$
F_p:z\mapsto L_\star(f)e^{-tpz\D_\bullet^2}\;.
$$
The function $F_p$ is continuous on $S$, holomorphic on its interior
and by Lemma \ref{bounds2} it satisfies for $y\in\mathbb R$:
$$
\|F_p(iy)\|\leq \|f\|_2,\qquad \|F_p(1+iy)\|_1\leq C(f) (pt)^{-2}\;.
$$
Then, by standard complex interpolation methods (see for example
\cite{SimonTrace}) we have $F_p(z) \in \mathcal L^{1/\Re (z)}(\mathcal
H)$ for all $z \in S$ with
\begin{align*}
\|F_p(z)\|_{1/\Re (z)}
&\leq \|F_p(0)\|_\infty^{1-\Re (z)}\, \|F_p(1)\|_1^{\Re (z)}\leq  
\|f\|_2^{1-\Re(z)}  C(f)^{\Re(z)} (pt)^{-2\Re(z)}\;.
\end{align*}
Applying this for $z=1/p$, we get
$$
\|L_\star(f) e^{-t\mathcal D_\bullet^2}\|_p\leq  \|f\|_2^{1-1/p}\, 
C(f)^{1/p}\,p^{-2/p}\,t^{-2/p}\;,
$$
as needed.
\end{proof}

\begin{Remark}
\label{rem}
For $f\in\A_\star$, making a recursive chose of factorization as follows:
$$
f=f_1\star f_2,\quad \partial_{\mu_1}f_2=f_{1,\mu_1}\star f_{2,\mu_1},\quad \cdots  \quad\partial_{\mu_4} f_{2,\mu_1\mu_2\mu_3}=
f_{1,\mu_1\mu_2\mu_3\mu_4}\star f_{2,\mu_1\mu_2\mu_3\mu_4}\;,
$$
the constant $C(f)$ appearing in Lemma \ref{bounds2} (for $d=4$) and Proposition
\ref{bounds3} is a finite multiple (depending only on $\tilde\Omega$
and $\Theta$) of
\begin{align*}
&\|\bar f_1\star f_1\|_1^{1/2}\|\bar f_2\star f_2\|_1^{1/2}
+\|f_1\|_2\sum_{\mu_1=1}^4\Big(\|\bar f_{1,\mu_1}\star
f_{1,\mu_1}\|_1^{1/2}
\|\bar f_{2,\mu_1}\star f_{2,\mu_1}\|_1^{1/2}
+\|f_{1,\mu_1}\|_2\sum_{\mu_2=1}^4\Big(
\\
& \|\bar f_{1,\mu_1\mu_2}\star f_{1,\mu_1\mu_2}\|_1^{1/2}
\|\bar f_{2,\mu_1\mu_2}\star f_{2,\mu_1\mu_2}\|_1^{1/2}
+\|f_{1,\mu_1\mu_2}\|_2\sum_{\mu_3=1}^4\Big(
\|\bar f_{1,\mu_1\mu_2\mu_3}\star f_{1,\mu_1\mu_2\mu_3}\|_1^{1/2}
\\
&\times \|\bar f_{2,\mu_1\mu_2\mu_3}\star f_{2,\mu_1\mu_2\mu_3}\|_1^{1/2}+
\|f_{1,\mu_1\mu_2\mu_3}\|_2\sum_{\mu_4=1}^4\Big(
\|\bar f_{1,\mu_1\mu_2\mu_3\mu_4}\star
f_{1,\mu_1\mu_2\mu_3\mu_4}\|_1^{1/2}
\\
&\times\|\bar f_{2,\mu_1\mu_2\mu_3\mu_4}\star f_{2,\mu_1\mu_2\mu_3\mu_4}\|_1^{1/2}
+\|f_{1,\mu_1\mu_2\mu_3\mu_4}\|_2\|f_{2,\mu_1\mu_2\mu_3\mu_4}\|_2\Big)
\Big)\Big)\Big)\;.
\end{align*}
\end{Remark}

\begin{Lemma}
\label{bounds4}
Let $f\in\A_\star$, $1\leq p\leq\infty$, $t\in(0,1]$ and $k\in\mathbb N$. Then, there exists a finite constant $C_{p,k}(f)$ such that
$$
\|L_\star(f) \D_\bullet^ke^{-t\mathcal D_\bullet^2}\|_p\leq C_{p,k}(f) \,t^{-2/p-k/2}\;.
$$
\end{Lemma}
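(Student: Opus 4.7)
The plan is to reduce everything to Proposition \ref{bounds3} by peeling off the $k$ extra powers of $\mathcal{D}_\bullet$ via spectral theory. I would write
\[
L_\star(f)\,\mathcal{D}_\bullet^k\, e^{-t\mathcal{D}_\bullet^2}
= \bigl(L_\star(f)\, e^{-t\mathcal{D}_\bullet^2/2}\bigr) \cdot \bigl(\mathcal{D}_\bullet^k\, e^{-t\mathcal{D}_\bullet^2/2}\bigr),
\]
which needs no commutation, since the exponentials and $\mathcal{D}_\bullet^k$ are functions of the same selfadjoint operator.

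Then I apply Schatten-H\"older in the form $\|AB\|_p \leq \|A\|_p\,\|B\|$. For the first factor, Proposition \ref{bounds3} with $t$ replaced by $t/2$ yields, for $t\in(0,1]$,
\[
\|L_\star(f)\, e^{-t\mathcal{D}_\bullet^2/2}\|_p
\leq \|f\|_2^{1-1/p}\,C(f)^{1/p}\,p^{-2/p}\,(t/2)^{-2/p}\;.
\]
For the second factor, the functional calculus applied to the real function $\lambda\mapsto |\lambda|^k e^{-t\lambda^2/2}$ gives the elementary bound
\[
\|\mathcal{D}_\bullet^k\, e^{-t\mathcal{D}_\bullet^2/2}\|
= \sup_{\lambda\in\mathbb{R}} |\lambda|^k e^{-t\lambda^2/2}
= (k/t)^{k/2}\,e^{-k/2}\;,
\]
(with the convention that this equals $1$ when $k=0$). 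Multiplying the two estimates produces a bound of the desired form $C_{p,k}(f)\,t^{-2/p-k/2}$, with $C_{p,k}(f)$ an explicit multiple of $\|f\|_2^{1-1/p} C(f)^{1/p}$ depending only on $p$ and $k$.

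There is no real obstacle here: the only subtlety is to avoid splitting $L_\star(f)$ from $e^{-t\mathcal{D}_\bullet^2/2}$ (which would force one to bound $\|L_\star(f)\|_p$, not available for $p<\infty$), and to make sure the $t/2$ argument still lies in $(0,1]$ when $t\in(0,1]$, which is automatic. The endpoint cases $p=\infty$ and $k=0$ are covered trivially and by Proposition \ref{bounds3} respectively, so no separate treatment is required.
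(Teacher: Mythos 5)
Your proof is correct and is essentially the paper's own argument: the paper likewise computes $\|\mathcal{D}_\bullet^k e^{-t\mathcal{D}_\bullet^2}\|=(k/2et)^{k/2}$ by spectral theory and then invokes Proposition \ref{bounds3}, the splitting of the heat semigroup into two halves being left implicit there but spelled out correctly by you. No gaps.
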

\begin{proof}
By spectral theory, we have  $\|\D_\bullet^ke^{-t\mathcal{D}^2_\bullet}\|=(k/2et)^{k/2}$, so the proof is a consequence of Proposition \ref{bounds3}.
\end{proof}
The next Lemma will explain why there is a major difference in the
spectral action when perturbing $\D$ by $A+JAJ^{-1}$ or simply by
$A$.
\begin{Lemma}
\label{bounds5}
For  $f,g\in\A_\star$, the operator $L_\star(f)\,R_\star(g)$ is of
trace class on $\mathcal H$.
\end{Lemma}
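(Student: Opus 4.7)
The plan is to compute the Schwartz kernel of $L_\star(f)R_\star(g)$ acting on $L^2(\mathbb R^d)$ and show it lies in $\mathcal S(\mathbb R^{2d})$; trace-class on $\mathcal H$ then follows immediately by tensoring with the identity on the finite-dimensional spinorial factor $\mathbb C^{2^d}$.

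First I would derive an explicit kernel. Using \eqref{Lstarf} for $L_\star(f)(x,y)$ together with the analogous formula for $R_\star(g)(x,y)$ and composing, or equivalently by inserting \eqref{Moyal} twice in $(f\star\psi\star g)(x)$ and performing the inner Gaussian integration in the twisting variables, the kernel takes the form
\[
K_{fg}(x,y) \;=\; \frac{1}{\pi^d|\det\Theta|}\int dq\; f(x+q)\,g(y-q)\,e^{2i\langle \Theta^{-1}q,\, y-x\rangle}
\]
after the substitution $q=\tfrac12\Theta p$.

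Next I would pass to the variables $s=\tfrac12(x+y)$, $r=y-x$ and make the shift $q'=q-r/2$. The antisymmetry of $\Theta^{-1}$ gives $\langle\Theta^{-1}r,r\rangle=0$, so the would-be quadratic-in-$r$ phase drops out and one obtains
\[
K_{fg}\bigl(s-\tfrac{r}{2},\, s+\tfrac{r}{2}\bigr) \;=\; \frac{1}{\pi^d|\det\Theta|}\int dq'\; F(s,q')\,e^{2i\langle \Theta^{-1}q',\,r\rangle},\qquad F(s,q'):=f(s+q')\,g(s-q').
\]
Since $f,g\in\mathcal S(\mathbb R^d)$, one checks $F\in\mathcal S(\mathbb R^{2d})$ via $|s|^2+|q'|^2\sim |s+q'|^2+|s-q'|^2$. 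The right-hand side is then the partial Fourier transform of $F$ in $q'$, evaluated at the linear function $-2\Theta^{-1}r$ of $r$; since the Fourier transform preserves $\mathcal S$ and $(s,r)\leftrightarrow(x,y)$ is a linear isomorphism, one concludes $K_{fg}\in\mathcal S(\mathbb R^{2d})$.

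Finally I would invoke the standard fact that any kernel in $\mathcal S(\mathbb R^{2d})$ defines a trace-class operator on $L^2(\mathbb R^d)$: expanding in the Hermite eigenbasis $\{h_\alpha\}$ of the harmonic oscillator $H$, the coefficients $c_{\alpha\beta}:=\langle h_\alpha\otimes h_\beta,K_{fg}\rangle$ decay faster than any polynomial in $|\alpha|+|\beta|$, so $\sum_{\alpha,\beta}|c_{\alpha\beta}|<\infty$ dominates the trace norm of $\sum c_{\alpha\beta}|h_\alpha\rangle\langle h_\beta|$. The main obstacle is the kernel computation itself: it is routine but requires careful bookkeeping of Fourier measures, Jacobians and antisymmetric phase cancellations. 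A slicker but less self-contained alternative would be to appeal to the matrix-basis (Weyl) realization of $(\mathcal S(\mathbb R^d),\star)$ as smoothing operators on Fock space, under which $L_\star(f)R_\star(g)$ becomes a tensor product of two individually trace-class operators.
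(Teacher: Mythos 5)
Your proof is correct, but it takes a genuinely different route from the paper's. The paper first invokes the factorization property \eqref{factor} to write $f=f_1\star f_2$, $g=g_1\star g_2$, uses the commutation of left and right Moyal multiplications to split $L_\star(f)R_\star(g)$ into the product of the two operators $L_\star(f_i)R_\star(g_i)$, and thereby reduces trace class to Hilbert--Schmidt; it then computes only the Hilbert--Schmidt norm from the kernel, obtaining the exact value $C\,\|f\|_2^2\|g\|_2^2$. You instead establish trace class directly, by showing that the full kernel is a partial Fourier transform (composed with linear changes of variables) of the Schwartz function $(s,q')\mapsto f(s+q')g(s-q')$, hence lies in $\mathcal S(\mathbb R^{2d})$, and then invoking the standard Hermite-expansion argument that a Schwartz kernel gives a trace-class operator. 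Your version buys self-containedness: it avoids the nontrivial factorization theorem of Gracia-Bond\'{\i}a--V\'arilly entirely, and your closing remark (that under the Weyl/matrix-basis realization $L_\star(f)R_\star(g)$ becomes a tensor product of two trace-class operators) is the cleanest conceptual explanation of why the lemma is true. The paper's version buys a sharper quantitative control: the trace norm is bounded by $C\,\|f_1\|_2\|g_1\|_2\|f_2\|_2\|g_2\|_2$, i.e.\ by $L^2$-norms of the factors rather than by high-order Schwartz seminorms of $f$ and $g$, and it is this $L^2$-type bound that is reused later (e.g.\ in the interpolation estimates feeding into Proposition \ref{germ}). Both routes are sound; just make sure, if you adopt yours, to record an explicit trace-norm bound in terms of seminorms of $f,g$, since a merely qualitative ``trace class'' statement would not suffice for the Duhamel estimates downstream.
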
 
\begin{proof}
By factorization, we can find $f_1,f_2,g_1,g_2\in\A_\star$ such that
$$
f=f_1\star f_2\,,\quad g=g_1\star g_2\;.
$$
Hence 
$$
L_\star(f)\,R_\star(g)=L_\star(f_1\star f_2)\,R_\star(g_1\star g_2)=L_\star(f_1)L_\star( f_2)\,R_\star(g_1)R_\star( g_2)\;.
$$
But since the left and right regular representations commute (by
associativity of the Moyal product), we get
$$
L_\star(f)\,R_\star(g)=L_\star(f_1)R_\star(g_1)\,L_\star( f_2)R_\star( g_2)\;,
$$
so that it suffices to show that $L_\star(f)\,R_\star(g)$ is
Hilbert-Schmidt for all $f,g\in\A_\star$.  From the operator kernel
formula \eqref{Lstarf} of $L_\star(f)$, and a similar one for
$R_\star(g)$, one easily deduces the operator kernel for the product
$L_\star(f)\,R_\star(g)$, and after a few lines of computations, we
get for a suitable constant depending only on $\det(\Theta)$:
$$
\|L_\star(f)\,R_\star(g)\|_2^2
=\int dx\,dy\, \big|[L_\star(f)\,R_\star(g)](x,y)\big|^2
= C \|f\|_2^2\,\|g\|_2^2\;.
$$
This completes the proof.
\end{proof}
\begin{Corollary}
\label{bounds6}
Let $\nabla_\mu^a$, $\mu=1,\cdots,4$, be the operators on 
$L^2(\mathbb R^4)$ given by
\begin{align*}
\nabla_\mu^{a}  :=i\partial_\mu +a_{\mu\nu} x^\mu\;,
\quad 
a\in M_4(\mathbb R)\;.
\end{align*}
Then for $f,g\in\A_\star$, $t\in(0,1]$  and 
$P_\alpha(\hat{\nabla})$ a polynomial of order
$\alpha$ in the operators $\nabla_\mu^{a} $, there exists a finite
constant $C(f,g,\alpha)$ such that
$$
\big\|L_\star(f)R_\star(g)P_\alpha(\hat{\nabla})e^{-tH}\big\|_1
\leq C(f,g,\alpha)\, t^{-\alpha/2}\;.
$$ 
\end{Corollary}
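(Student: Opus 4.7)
The plan is a simple Hölder-type split,
\[
L_\star(f)R_\star(g)P_\alpha(\hat{\nabla})e^{-tH}
= \bigl(L_\star(f)R_\star(g)\bigr)\cdot\bigl(P_\alpha(\hat{\nabla})e^{-tH}\bigr),
\]
combined with the ideal estimate $\|AB\|_1 \le \|A\|_1\|B\|_\infty$. The whole $t$-blowup will then have to be carried by the operator-norm factor, while the trace-norm factor gives a $t$-independent constant depending on $f$ and $g$ only.

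For the first factor I would invoke Lemma \ref{bounds5}: after choosing factorizations $f=f_1\star f_2$ and $g=g_1\star g_2$ via the factorization property \eqref{factor}, the Hilbert--Schmidt estimate $\|L_\star(f_i)R_\star(g_i)\|_2 \le C(\Theta)\|f_i\|_2\|g_i\|_2$ read off from the proof of that lemma assembles into
\[
\|L_\star(f)R_\star(g)\|_1 \le \|L_\star(f_1)R_\star(g_1)\|_2\,\|L_\star(f_2)R_\star(g_2)\|_2 \le C(f,g),
\]
a finite constant independent of $t$ and $\alpha$.

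For the second factor the key observation is that each $\nabla^a_\mu = i\partial_\mu + a_{\mu\nu}x^\nu$ is merely a constant-coefficient linear combination of $\partial_\mu$ and $x^\mu$, so the proof of Corollary \ref{nabla-com2} applies verbatim: the same operator inequalities show $\partial_\mu(1+H)^{-1/2}$ and $x_\mu(1+H)^{-1/2}$ are bounded, and the induction step uses only that the commutator $[\nabla^a_\mu,H] = 2i\tilde{\Omega}^2 x_\mu + 2a_{\mu\nu}\partial^\nu$ again lies in the span of $\{\partial_\nu,x^\nu\}$. Consequently $P_\alpha(\hat{\nabla}^a)(1+H)^{-\alpha/2}$ extends to a bounded operator with norm controlled by $\alpha$, $a$ and $\tilde{\Omega}$.

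It remains to estimate $(1+H)^{\alpha/2}e^{-tH}$ in operator norm. Since $H$ is selfadjoint with spectrum contained in $[\tfrac{d}{2}\tilde{\Omega},\infty)$, the functional calculus reduces this to a one-variable maximization of $\lambda\mapsto(1+\lambda)^{\alpha/2}e^{-t\lambda}$, yielding $\|(1+H)^{\alpha/2}e^{-tH}\|\le C_\alpha\,t^{-\alpha/2}$ for $t\in(0,1]$. Multiplying the three bounds produces the claimed estimate with $C(f,g,\alpha) := C(f,g)\cdot\|P_\alpha(\hat{\nabla}^a)(1+H)^{-\alpha/2}\|\cdot C_\alpha$. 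There is no real obstacle in this plan; the only non-routine point is the implicit extension of Corollary \ref{nabla-com2} from $\{\nabla,\tilde{\nabla}\}$ to the broader family $\nabla^a$, but since its proof uses only the linear-in-$(\partial,x)$ structure of the operators, this extension comes essentially for free.
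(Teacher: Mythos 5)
Your proposal is correct and follows essentially the same route as the paper: the paper's proof likewise reduces the claim, via Lemma \ref{bounds5}, to showing $\|P_\alpha(\hat{\nabla})e^{-tH}\|\leq C\,t^{-\alpha/2}$, which it obtains by spectral theory from the boundedness of $P_\alpha(\hat{\nabla})(1+H)^{-\alpha/2}$ as a ``slight generalization'' of Corollary \ref{nabla-com2}. You have simply spelled out the details that the paper leaves implicit (the Hölder split, the verbatim extension of Corollary \ref{nabla-com2} to the $\nabla^a_\mu$ family, and the one-variable maximization), all of which are sound.
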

\begin{proof}
 From Lemma \ref{bounds5}, it suffices to show that
 $\|P_\alpha(\hat{\nabla})e^{-tH}\|\leq Ct^{-\alpha/2}$, which will follow
 by spectral theory if $P_\alpha(\hat{\nabla})(1+H)^{-|\alpha|/2}$ is
 bounded. But this is a slight generalization of Corollary
 \ref{nabla-com2}.
\end{proof}
\noindent
We can now deduce the germ of the asymptotic expansion formula we need.
\begin{Proposition}
\label{germ}
Let $(\mathcal A_{\bf f},\mathcal H_{\bf f},\mathcal D_{\bf f}, J_{\bf
f})$ be a finite spectral triple. Let $\D:=\D_\bullet\otimes
1+\Gamma\otimes\mathcal D_{\bf f}$, $\bullet=1,2$, be the Dirac operator of the
product spectral triple $(\A\otimes\mathcal A_{\bf f},\mathcal
H\otimes\mathcal H_{\bf f},\D_\bullet\otimes 1+\Gamma\otimes\mathcal D_{\bf
f})$. Let also $\D_A:=\D+A+{\bf J} A {\bf J}^{-1}$ be the fluctuated
Dirac operator. Here ${\bf J}:=J\otimes J_{\bf f}$ and $A$ is a
self-adjoint one-form, that is $A=A^*:=\sum_i a_i[\D,b_i]$, where the
sum is finite and $a_i,b_i\in \A\otimes\A_{\bf f}$. 
In terms of the decomposition
$\mathcal{D}_A^2=\mathcal{D}^2 + F_0+F_1+ {\bf J}(F_0+F_1) {\bf J}^{-1} + 
2A{\bf J}A{\bf J}^{-1}$, where $F_0$ is a bounded operator and $F_1$ 
is linear in the operators $\nabla^a_\mu$ of Corollary~\ref{bounds6},
the following holds:
\begin{align*}
\mathrm{Tr}\big(e^{-t\mathcal{D}^2_A}\big)&=
\mathrm{Tr}\Big(\Big\{
1
-2 t(F_0+F_1)+t^2\big(F_0^2+F_1F_0+F_0F_1+F_1^2\big)
\\
&\quad-\frac{t^3}{3}\big(
F_0[\mathcal D^2,F_1] -[\mathcal D^2,F_1] F_0 
+F_1[\mathcal D^2,F_1]
+F_0 F_1^2 + F_1F_0F_1+F_1^2 F_0+F_1^3
\big)
\nonumber
\\
& \quad+ \frac{t^4}{12}
\big(
F_1[\mathcal{D}^2,[\mathcal D^2,F_1]]
+ 2 F_1^2\,[\mathcal{D}^2,F_1]
+F_1\,[\mathcal{D}^2,F_1]F_1
+F_1^4\big)
\Big\}
e^{-t\mathcal D^2}\Big)+ \mathcal{O}(\sqrt{t})\;.
\end{align*}
\end{Proposition}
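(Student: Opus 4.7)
The plan is to apply the iterated Duhamel (Dyson) formula to $e^{-t\mathcal{D}_A^2}$, expanding in powers of the perturbation
\[
V := \mathcal{D}_A^2 - \mathcal{D}^2 = W_L + W_R + W_{LR}\;,\quad
W_L := F_0+F_1\;,\;\; W_R := \mathbf{J}W_L\mathbf{J}^{-1}\;,\;\; W_{LR} := 2A\mathbf{J}A\mathbf{J}^{-1}\;.
\]
Truncated at fourth order, the expansion reads
\begin{align*}
\mathrm{Tr}(e^{-t\mathcal{D}_A^2}) &= \sum_{n=0}^{4}(-t)^n\int_{\Delta_n}ds\;
\mathrm{Tr}\big(Ve^{-ts_1\mathcal{D}^2}\cdots Ve^{-ts_n\mathcal{D}^2}\big) + R_5(t)\;,
\end{align*}
with $\Delta_n$ the standard $n$-simplex and $R_5(t)$ a fifth-order remainder. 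By cyclicity of the trace, each integrand depends only on the gaps between successive copies of $V$.

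The proof then rests on two structural reductions. First, since $J$ commutes with $\mathcal{D}_\bullet$ in $\mathrm{KO}$-dimension $2d$, the anti-linear $\mathbf{J}$ commutes with $\mathcal{D}^2$ and with $e^{-t\mathcal{D}^2}$, and $\mathrm{Tr}(\mathbf{J}X\mathbf{J}^{-1}e^{-t\mathcal{D}^2}) = \overline{\mathrm{Tr}(Xe^{-t\mathcal{D}^2})}$ coincides with $\mathrm{Tr}(Xe^{-t\mathcal{D}^2})$ whenever $X$ is a product of $F_0$ and $F_1$ factors and the trace is real. Hence purely $W_R$-strings contribute identically to purely $W_L$-strings, producing the doubling which, combined with the Duhamel factors $\tfrac{(-t)^n}{n!}$, gives exactly the prefactors $-2t$, $t^2$, $-\tfrac{t^3}{3}$ and $\tfrac{t^4}{12}$ appearing in the statement. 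Second, Lemma~\ref{bounds5} says that every operator of the form $L_\star(f)R_\star(g)$ is trace class. Any string mixing at least one $W_L$ with at least one $W_R$ factor, as well as any string carrying a $W_{LR}$ factor, is, after sliding bounded factors through the heat semigroups, of the form $L_\star(\cdot)R_\star(\cdot)P(\hat{\nabla})e^{-t\mathcal{D}^2}$ and by Corollary~\ref{bounds6} has trace norm bounded uniformly as $t\to 0$. With the $t^n$-prefactors, $n\geq 1$, such contributions fit inside $\mathcal{O}(\sqrt{t})$.

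It then remains to reduce each pure-$W_L$ simplex integral to the canonical form $\mathrm{Tr}(X e^{-t\mathcal{D}^2})$, which is achieved by iterating the identity
$$[e^{-s\mathcal{D}^2},X] = -\int_0^s e^{-u\mathcal{D}^2}[\mathcal{D}^2,X]e^{-(s-u)\mathcal{D}^2}du\;.$$
Each commutation inserts a commutator $[\mathcal{D}^2,X]$ and raises the effective power of $t$ by one. Because $F_0$ is bounded and $\mathrm{Tr}(e^{-t\mathcal{D}^2})\sim t^{-d}$ by Lemma~\ref{normalization}, any commutator insertion involving $F_0$ falls strictly below the target accuracy and may be discarded; this is why no $[\mathcal{D}^2,F_0]$ appears in the statement. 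By contrast, $F_1$ is linear in the derivations $\nabla^a_\mu$, so at order three a single commutator $[\mathcal{D}^2,F_1]$ and at order four the iterated commutator $[\mathcal{D}^2,[\mathcal{D}^2,F_1]]$ must be retained, and this produces precisely the commutator terms displayed. The residual Duhamel remainder $R_5(t)$ and every leftover commutator correction are then estimated in trace norm via Lemma~\ref{bounds4} and Proposition~\ref{bounds3}, distributing the heat semigroups through a H\"older inequality on Schatten ideals, which yields the claimed bound $R_5(t)=\mathcal{O}(\sqrt{t})$. The main obstacle is exactly this bookkeeping: tracking at each step which $\nabla^a_\mu$-insertions originate in $F_1$ and which in a commutator with $\mathcal{D}^2$, assembling the simplex integrals into the exact rational prefactors, and verifying at every intermediate step that no operator falls outside the Schatten-norm control needed to continue the expansion.
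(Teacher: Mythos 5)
Your overall strategy is the same as the paper's: Duhamel expansion of $e^{-t\D_A^2}$ to fourth order, elimination of every string containing both a ``left'' and a ``right'' factor (or an $A{\bf J}A{\bf J}^{-1}$) via Lemma \ref{bounds5} and Corollary \ref{bounds6}, doubling of the pure left strings using that ${\bf J}$ commutes with $\D$, and recollection of the heat semigroups by iterated commutators. There is, however, one genuine error in your reduction step: you assert that \emph{every} commutator insertion $[\D^2,F_0]$ may be discarded because $F_0$ is bounded. This fails for the term $F_1[\D^2,F_0]$ produced at second Duhamel order. Indeed $F_1$ is first order in $\nabla$ and $[\D^2,F_0]$ is again first order (Lemma \ref{nabla-com}), so the relevant trace is of the type $\mathrm{Tr}\big(L_\star(h)\nabla_\mu\nabla_\nu e^{-tH}\big)=\mathcal{T}_{\mu\nu}(h)\sim t^{-d/2-1}=t^{-3}$ by Proposition \ref{Proposition-trace}, and with its overall prefactor $t^{2}\cdot t$ it contributes at order $t^{0}$, not $\mathcal{O}(\sqrt t)$. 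The paper does not discard this term; it rewrites it under the trace, using $\mathrm{Tr}\big([\D^2,F_1F_0]e^{-t\D^2}\big)=0$, as $\mathrm{Tr}\big(F_1[\D^2,F_0]e^{-t\D^2}\big)=-\mathrm{Tr}\big([\D^2,F_1]F_0e^{-t\D^2}\big)$; this is exactly the $-[\D^2,F_1]F_0$ term inside the $t^3/3$ bracket of the statement, which your procedure would lose (it is responsible for the $V_{A,\phi}\star\partial^\mu A^\nu-\partial^\mu A^\nu\star V_{A,\phi}$ contribution to the spectral action). The absence of $[\D^2,F_0]$ from the final formula is a consequence of this rewriting, not of negligibility.

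Relatedly, the power counting you invoke ($F_0$ bounded together with $\mathrm{Tr}(e^{-t\D^2})\sim t^{-d}$) cannot decide which terms survive: the global heat trace scales as $t^{-d}$, whereas all smallness in this expansion comes from the \emph{localized} traces $\mathrm{Tr}\big(L_\star(f)P_\alpha(\hat{\nabla})e^{-tH}\big)=\mathcal{O}\big(t^{-d/2-\lfloor|\alpha|/2\rfloor}\big)$ of Proposition \ref{Proposition-trace}; the correct bookkeeping is by the total differential order of the string, not by which factors are bounded. A smaller imprecision: Corollary \ref{bounds6} gives $\|L_\star(f)R_\star(g)P_\alpha(\hat{\nabla})e^{-tH}\|_1\leq C\,t^{-|\alpha|/2}$ rather than a uniform bound, so the cross terms are $\mathcal{O}(\sqrt t)$ only after combining this with their $t$-prefactors. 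Apart from these points, the remaining steps --- trace-norm convergence of the Duhamel series, the simplex-moment computation of the rational prefactors, and the doubling argument --- coincide with the paper's proof.
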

\begin{proof}
First, it is clear that $e^{-t\D_A^2}$ is of trace-class for all
$t>0$. Indeed, since the eigenvalues of $\D_\bullet$ behave like
$n^{-1/8}$, its resolvent belongs to the Schatten ideal $\mathcal
L^{8+\varepsilon}(\mathcal H)$ for all $\varepsilon>0$. From the
relation
$$
\frac1{\D+i}=\frac1{\D_\bullet\otimes 1+i}\Big(1-(A+{\bf J}A{\bf J}^{-1}+\Gamma\otimes\mathcal D_{\bf f})\frac1{\D+i}\Big)\;,
$$
and the fact that $A+{\bf J}A{\bf J}^{-1}+\Gamma\otimes\mathcal D_{\bf
f}$ is bounded, we see that the resolvent of $\D$ belongs to
$\mathcal L^{8+\varepsilon}(\mathcal H\otimes\mathcal H_{\bf f})$ for
all $\varepsilon>0$ too.  Accordingly, $e^{-t\D_A^2}$ is of
trace-class for all $t>0$.

Note also that the bounds of Lemmas \ref{bounds1}, \ref{bounds2},
\ref{bounds4} and Proposition \ref{bounds3}, \ref{bounds6} remain valid
with $\D$ instead of $\D_\bullet$. Indeed, since
$\{\D_\bullet,\Gamma\}=0$ and $\Gamma^2=1$, we get
$\D^2=\D_\bullet^2\otimes 1+1\otimes\D_{\bf f}^2$ and thus
$$
\D^ke^{-t\D^2}=\sum_{j=0}^k C_{k,j}\Gamma^{k-j}\D_\bullet^j 
e^{-t\D_\bullet^2}\otimes \D_{\bf f}^{k-j}e^{-t\D_{\bf f}^2}\;.
$$
This implies for $f,g\in\B_\star$, $a,b\in\A_{\bf f}$ and $1\leq p\leq \infty$,
\begin{align*}
&\|L_\star(f)\otimes a\,{\bf J}L_\star(g)\otimes b{\bf J}^{-1}\, 
\D^ke^{-t\D^2}\|_p
\\*
&\qquad\qquad\qquad\qquad\leq\sum_{j=0}^k |C_{k,j}|
\|L_\star(f)\,R_\star(\overline g)\,\D_\bullet^je^{-t\D_\bullet^2}\|_p\|aJ_{\bf f}bJ_{\bf f}^{-1} \D_{\bf f}^{k-j}e^{-t\D_{\bf f}^2}\|_p\;.
\end{align*}
Thus, we may assume without loss of generality that there is no
finite spectral triple in the picture.

We are going to deduce the expansion from the Duhamel principle:
$$
e^{-t(A+B)}=e^{-tA}-t\int_0^1e^{-st(A+B)}\,B\,e^{-(1-s)tA}\,ds\;.
$$
We write $\D_A^2=\D^2+\tilde{F}_0+\tilde{F}_1$,
with $\tilde{F}_0:= F_0+{\bf J}F_0{\bf J}^{-1}+ 2A{\bf J}A{\bf
J}$ and $\tilde{F}_1:=
F_1+{\bf J}F_1{\bf J}^{-1}$. The operator $\tilde{F}_0$ is bounded,
whereas $\tilde{F}_1$ is unbounded but relatively $\D$-bounded.  The
Duhamel expansion allows us to write (formally first):
\begin{equation}
\label{Duhamel}
e^{-t \mathcal D^2_A}=\sum_{j=0}^\infty (-t)^j\,E_j(t)\;,
\end{equation}
where $E_0(t):=e^{-t\mathcal D^2}$ and for $j>0$:
\begin{align*}
E_j(t):=\sum_{i_1,\cdots,i_j\in\{0,1\}}\int_{\triangle_j} \,
e^{-s_0 t\mathcal D^2}\,\tilde{F}_{i_1}\,e^{-s_1t\mathcal D^2}\cdots
\tilde{F}_{i_j}\,e^{-s_jt\mathcal D^2}\,d^js\;,
\end{align*}
and  $\triangle_j$ denotes the ordinary $j$-simplex:
\begin{align*}
\triangle_j&:=\big\{s\in\mathbb R^{j+1};\, s_k\geq0,\,\sum_{k=0}^j s_k=1\big\}\;.
\end{align*}
We first show that the sum \eqref{Duhamel} converges in the trace norm
for small values of $t>0$. We only treat the case $j\geq1$, the case
$j=0$ being covered by Lemma \ref{bounds1}. For that we use the
H\"older inequality (since $\sum_{k=0}^j s_k=1$):
$$
\|E_j(t)\|_1\leq\!\! \!\!\!\sum_{i_1,\cdots,i_j\in\{0,1\}}
\int_{\triangle_j} \,\|e^{-s_0 t\mathcal D^2}\,\tilde{F}_{i_1}\,
e^{-s_1t\mathcal D^2}\|_{(s_0+s_1)^{-1}}\| \tilde{F}_{i_2}\,
e^{-s_2t\mathcal D^2}\|_{s_2^{-1}}\cdots
\|\tilde{F}_{i_j}\,e^{-s_jt\mathcal D^2}\|_{s_j^{-1}}\,d^js\;.
$$
Then we use the estimate of Proposition \ref{bounds3} and Lemma
\ref{bounds5} for $k=2,\cdots,j$ (see the Remark \ref{rem} for the
precise value of the constants):
$$
\|\tilde{F}_{i_k}\,e^{-s_kt\mathcal D^2}\|_{s_k^{-1}}\leq
\begin{cases}
4\|\tilde{F}\|^2C_1(\tilde{F})^{s_k} \,t^{-2s_k}\,,
\quad& \mbox{if}\quad i_k=0\;,\\
2\|\tilde{F}\|C_2(\tilde{F})^{s_k} \,t^{-2s_k} \,(ts_k)^{ -1/2}\,,
\quad& \mbox{if} \quad i_k=1\;.
\end{cases}
$$
For the case $i_k=1$, we need to use the factorization property of the
algebra of Schwartz functions with Moyal product (as in the proof of
Lemma \ref{bounds2}), to expand $A$ as a finite sum of products of
elements in $\A_\star\otimes M_{16}(\mathbb C)$, and then we can
proceed as for the other factors.  Taking into account that there are
$2^j$ such terms and that
$$
\int_{\Delta_j}\prod_{i=0}^j s_i^{-1/2}\,d^js\leq2^{j}\;,
$$
we get, since $\sum_{k=0}^j s_k=1$, the rough estimate 
$$
\|E_j(t)\|_1\leq 2^{j} 
\big(4\|\tilde{F}\|^{2}+4\|\tilde{F}\|\big)^j t^{-j/2-2}\;.
$$
Thus the sum $\sum_{j=0}^\infty (-t)^j\,E_j(t)$ converges absolutely
in the trace-norm for small values of $t$.  These estimates also show
that
\begin{align*}
\Big|{\rm Tr}\big(e^{-t \mathcal D^2_A}\big)
-\sum_{j=5}^\infty (-t)^j\,{\rm Tr}\big(E_j(t)\big)\Big|=O(t^{1/2})\,,
\quad t\to 0\;,
\end{align*}
and accordingly, we only need to consider the terms $(-t)^j{\rm
Tr}\big(E_j(t)\big)$ for $j=0,1,2,3$ and $4$.

Note first that
\begin{align*}
\mathrm{Tr}(-t E_1(t))= \int_0^1 \,\mathrm{Tr}\Big(-t\,e^{-s t\mathcal
D^2}\,(\tilde{F}_0+\tilde{F}_1)\,
e^{-(1-s)t\mathcal D^2}\Big)\,ds=\mathrm{Tr}\Big(-t\,(\tilde{F}_0
+\tilde{F}_1)\,e^{-t\mathcal D^2}\Big)\;.
\end{align*}
For $j=2,3$ and $4$, we use the relation \eqref{commm}
to collect the heat operators as follows:
\begin{align}
E_2(t) &= \int_{\triangle_2} \,e^{-s_1 t\mathcal D^2}\,(\tilde{F}_0+\tilde{F}_1)\,
e^{-(s_2-s_1)t\mathcal D^2}
(\tilde{F}_0+\tilde{F}_1)\,e^{-(1-s_2)t\mathcal D^2}\,ds_1 ds_2
\nonumber
\\
&= \int_{\triangle_2} \,e^{-s_1 t\mathcal D^2}\,(\tilde{F}_0+\tilde{F}_1)^2
e^{-(1-s_1)t\mathcal D^2}\,ds_1 ds_2
- t \int_{\triangle_2} \int_0^1 dr \,(s_2-s_1) 
\,e^{-s_1 t\mathcal D^2}\,(\tilde{F}_0+\tilde{F}_1)\,
\nonumber
\\
&\qquad\qquad \qquad \qquad \qquad \times e^{-(s_2-s_1)r t\mathcal D^2}
[\mathcal D^2,(\tilde{F}_0+\tilde{F}_1)]
e^{-(s_2-s_1)(1-r) t\mathcal D^2}
\,e^{-(1-s_2)t\mathcal D^2}\,ds_1 ds_2
\nonumber
\\
&= \int_{\triangle_2} ds_1 ds_2\,e^{-s_1 t\mathcal D^2}\,
\Big\{(\tilde{F}_0+\tilde{F}_1)^2 -t(s_2-s_1) (\tilde{F}_0+\tilde{F}_1)\,
[\mathcal D^2,(\tilde{F}_0+\tilde{F}_1)]\Big\}
e^{-(1-s_1)t\mathcal D^2}
\nonumber
\\
& \quad+ t^2 \int_{\triangle_2} ds_1 ds_2 \int_0^1 dr_1 dr_2 \,r_1 (s_2-s_1)^2 
\,e^{-s_1 t\mathcal D^2}\,(\tilde{F}_0+\tilde{F}_1)\,
\nonumber
\\
&\qquad \times 
e^{-(s_2-s_1)r_1r_2 t\mathcal D^2}
[\mathcal{D}^2,[\mathcal D^2,(\tilde{F}_0+\tilde{F}_1)]]
e^{-(s_2-s_1)(1-r_1r_2) t\mathcal D^2}
e^{-(1-s_2)t\mathcal D^2}
\nonumber
\\
&= \int_{\triangle_2} ds_1 ds_2\,e^{-s_1 t\mathcal D^2}\,
\Big\{(\tilde{F}_0+\tilde{F}_1)^2 -t(s_2-s_1) (\tilde{F}_0+\tilde{F}_1)\,
[\mathcal D^2,(\tilde{F}_0+\tilde{F}_1)]
\nonumber
\\
& \qquad\qquad + \frac{t^2}{2}(s_2-s_1)^2 (\tilde{F}_0
+\tilde{F}_1)[\mathcal{D}^2,[\mathcal D^2,(\tilde{F}_0+\tilde{F}_1)]]\Big\}
e^{-(1-s_1)t\mathcal D^2}
\nonumber
\\
& - t^3 \int_{\triangle_2} ds_1 ds_2 \int_0^1 dr_1 dr_2 dr_3\,
r_1^2 r_2 (s_2-s_1)^3 
\,e^{-s_1 t\mathcal D^2}\,(\tilde{F}_0+\tilde{F}_1)\,
e^{-(s_2-s_1)r_1r_2r_3 t\mathcal D^2}
\nonumber
\\*
&\qquad \qquad \times 
[\mathcal{D}^2,[\mathcal{D}^2,[\mathcal D^2,(\tilde{F}_0+\tilde{F}_1)]]]]
e^{-(s_2-s_1)(1-r_1r_2r_3)t \mathcal D^2}\,
e^{-(1-s_2)t \mathcal D^2}\;.
\end{align}
Since the principal symbol of $\D^2$ is scalar, we see that
$[\mathcal{D}^2,[\mathcal{D}^2,[\mathcal D^2,\tilde{F}_1]]]]$ has order $4$.
Thus, Lemma \ref{bounds4} shows that the last integral (multiplied by
its $t^2$ global prefactor) gives rise to a trace-class operator which
trace is of order $t^{1/2}$.  Integrating the trace over $\Delta_2$
and disregarding the terms that vanish when $t\to0$, we find
\begin{align*}
\mathrm{Tr}(t^2 E_2(t))=
\mathrm{Tr}\Big( \Big\{
\frac{t^2}{2} (\tilde{F}_0+\tilde{F}_1)^2 
&-\frac{t^3}{6} \big(\tilde{F}_0[\mathcal D^2,\tilde{F}_1]
+\tilde{F}_1[\mathcal D^2,\tilde{F}_0]
+\tilde{F}_1[\mathcal D^2,\tilde{F}_1]\big)
\nonumber
\\*
&+ \frac{t^4}{24}
\tilde{F}_1[\mathcal{D}^2,[\mathcal D^2,\tilde{F}_1]]\Big\}
e^{-t\mathcal D^2}\Big)+ \mathcal{O}(\sqrt{t})\;.
\end{align*}
It will be more convenient to write 
$\mathrm{Tr}\Big( F_1[\mathcal D^2,F_0]e^{-t\mathcal D^2}\Big)
=\mathrm{Tr}\Big( -[\mathcal D^2,F_1]F_0e^{-t\mathcal D^2}\Big)
$. 
By similar arguments one finds
\begin{align*}
\mathrm{Tr}(-t^3 E_3(t))
= \mathrm{Tr}\Big(
\Big\{&-\frac{t^3}{6} (\tilde{F}_0 \tilde{F}_1^2 
+ \tilde{F}_1\tilde{F}_0\tilde{F}_1+\tilde{F}_1^2
\tilde{F}_0+\tilde{F}_1^3)
\nonumber
\\
&+\frac{t^4}{24}(2 \tilde{F}_1^2\,[\mathcal{D}^2,\tilde{F}_1]
+\tilde{F}_1\,
[\mathcal{D}^2,\tilde{F}_1]\tilde{F}_1)\Big\}\,e^{-t\mathcal D^2}  \Big)
+\mathcal{O}(\sqrt{t})\;,
\end{align*}
and lastly
\begin{align*}
\mathrm{Tr}(t^4 E_4(t))
&= \mathrm{Tr}\Big(
\frac{t^4}{24} \tilde{F}_1^4\,e^{-t\mathcal D^2}  \Big)
+\mathcal{O}(\sqrt{t})\;.
\end{align*}
In summary, we have
\begin{align}
\label{Duhamel-4}
\mathrm{Tr}\big(e^{-t\mathcal{D}^2_A}\big)
&= \mathrm{Tr}\Big(\Big\{
1 -t(\tilde{F}_0+\tilde{F}_1)+\frac{t^2}{2}\big(\tilde{F}_0^2
+\tilde{F}_1\tilde{F}_0+\tilde{F}_0\tilde{F}_1+\tilde{F}_1^2\big)
\\
&\quad-\frac{t^3}{6}\big(
\tilde{F}_0[\mathcal D^2,\tilde{F}_1]
- [\mathcal D^2,\tilde{F}_1]\tilde{F}_0 
+\tilde{F}_1[\mathcal D^2,\tilde{F}_1]
+\tilde{F}_0 \tilde{F}_1^2 + \tilde{F}_1\tilde{F}_0\tilde{F}_1
+\tilde{F}_1^2 \tilde{F}_0+\tilde{F}_1^3
\big)
\nonumber
\\
& \quad+ \frac{t^4}{24}
\big(
\tilde{F}_1[\mathcal{D}^2,[\mathcal D^2,\tilde{F}_1]]
+ 2 \tilde{F}_1^2\,[\mathcal{D}^2,\tilde{F}_1]
+\tilde{F}_1\,[\mathcal{D}^2,\tilde{F}_1]\tilde{F}_1
+\tilde{F}_1^4\big)
\Big\}
e^{-t\mathcal D^2}\Big)+ \mathcal{O}(\sqrt{t})\;.\nonumber
\end{align}
Now, we can take into account the result of Lemma \ref{bounds5}, which
says in this context that mixed products $F_i{\bf J}F_j{\bf J}^{-1}$
are already trace-class. Since $JL_\star(g)J^{-1}=R_\star(\bar g)$,
with $R_\star$ the right regular representation, we see by Lemma
\ref{nabla-com} that all terms in \eqref{Duhamel-4} with products
of $F_i$ and ${\bf J}F_j{\bf J}^{-1}$ are (up to matrices) of the form
$$
L_\star(f)R_\star(g)\prod_{\mu=1}^4{(\hat{\nabla}_\mu)}^{\alpha_\mu}e^{-tH},
$$
with $|\alpha|$ not exceeding the number of $F_1$ plus the number of
commutators by $\D^2$. This argument also relies on the fact that
${\bf J}$ commutes with $\hat{\nabla}_\mu$ according to (\ref{Jab}).
Thus, Corollary \ref{bounds6} shows that the cross-terms, i.e. the
terms with powers of both $F_i$ and ${\bf J}F_j{\bf J}^{-1}$ resulting
from products of $\tilde{F}_0=F_0+ {\bf J}F_0{\bf J}^{-1}
+ 2 F_{-1} {\bf J}F_{-1}{\bf J}^{-1}$ and 
$\tilde{F}_1=F_1+ {\bf J}F_1{\bf J}^{-1}$, 
where $F_{-1}:=A$, give
rise to vanishing contributions in the limit $t\to 0$. Thus, only the
terms with either powers of $F_i$ or powers of ${\bf J}F_i{\bf
J}^{-1}$ do contribute to the diverging part of this asymptotic the
expansion.  Since moreover ${\bf J}$ commutes with $\D$ (we are in
even KO-dimension), the trace property shows that both terms (with
only $A$ or only ${\bf J}A{\bf J}^{-1}$) give the same contribution
and we get the announced result.
\end{proof}
\begin{Remark}
\label{conclusion}{\rm
A very important feature of Proposition \ref{germ} is that if 
the heat-trace of the partially fluctuated  Dirac operator
$\widetilde\D_A:=\D+A$, $A=\sum_i a_i[\D,b_i]$, has an asymptotic
expansion
$$
{\rm Tr}\big(e^{-t \widetilde\D_A^2}\big)=a_0 \,t^{-4}+\sum_{k=1}^4 a_k \,t^{-2+k/2}+
\mathcal{O}(\sqrt{t})\;,
$$ 
then the heat-trace of the fully fluctuated  Dirac 
operator $\D_A:=\D+A+{\bf J} A {\bf J}^{-1}$
has the asymptotic expansion
$$
{\rm Tr}\big(e^{-t \D_A^2}\big)=a_0 \,t^{-4}+2\sum_{k=1}^4 a_k \,t^{-2+k/2}+
\mathcal{O}(\sqrt{t})\;,
$$ 
for the same coefficients $a_0,\cdots,a_k$. Also, this shows that the
asymptotic expansion of the heat-trace of the fully fluctuated Dirac
operator is independent of the choice of the real structure $J_{\bf
 f}$ of the finite spectral triple $(\mathcal A_{\bf f},\mathcal
H_{\bf f},\mathcal D_{\bf f})$. This fact holds for Moyal spectral
triples with harmonic propagation in any (even) dimension. }
\end{Remark}

\subsection{Application: the spectral action for the $U(1)$-Higgs model}
\label{U1}

In the Connes-Lott spirit \cite{Connes:1990qp} we take the tensor
product of the 4-dimensional spectral triple
$(\mathcal{A}_\star,\mathcal{H},\mathcal{D}_\bullet,\Gamma,J)$,
$\bullet=1,2$, with the finite Higgs spectral triple
$(\mathbb{C}\oplus\mathbb{C}, \mathbb{C}^2, M\sigma_1,J_{\bf f})$,
where $M>0$ and $J_{\bf f}$ is any real structure.  The Dirac operator
$\mathcal{D}=\mathcal{D}_\bullet \otimes 1 + \Gamma \otimes M\sigma_1$
of the product triple becomes
\begin{align*}
\mathcal{D} =\left(\begin{array}{cc} 
\mathcal{D}_\bullet & M \Gamma \\ 
M \Gamma  & \mathcal{D}_\bullet
\end{array}\right)\;.
\end{align*}
In this representation, the algebra is $\mathcal{A}_\star \oplus
\mathcal{A}_\star \ni (f,g)$, which acts on $\mathcal{H}
\oplus \mathcal{H}$ by diagonal left Moyal multiplication. The
commutator of $\mathcal{D}$ with $(f,g)$ is, in case that
$\mathcal{D}_1$ is chosen, according to (\ref{Gamma}) given by 
\begin{align*}
[\mathcal{D},(f,g)] =\left(\begin{array}{cc} 
i \Gamma^\mu L_\star(\partial_\mu f)  & M \Gamma L_\star(g-f)\\ 
M \Gamma  L_\star (f-g) & i\Gamma^\mu 
L_\star(\partial_\mu g)
\end{array}\right)\;.
\end{align*}
(If we choose  $\mathcal{D}_2$ instead, then $\Gamma^\mu $ has to be
replaced by $\Gamma^{\mu +4}$ everywhere.)  This shows that the selfadjoint
fluctuation $A=\sum_i a_i [\mathcal{D},b_i]$ of the fluctuated Dirac
operators $\mathcal{D}_A= \mathcal{D} + A+{\bf J}A{\bf J}^{-1}$, ${\bf
J}=J\otimes J_{\bf f}$, is of the form
\begin{align*}
A  =\left(\begin{array}{cc} 
\Gamma^\mu L_\star(A_\mu)
& \Gamma L_\star(\phi) \\ \Gamma L_\star(\bar{\phi})  & 
\Gamma^\mu  L_\star(B_\mu)
\end{array}\right)\;,
\end{align*}
for real two real one-forms $A_\mu,B_\mu \in \mathcal{A}_\star$ and a
one complex field $\phi \in \mathcal{A}_\star$. 
Again, this holds for $\mathcal{D}_1$; for $\mathcal{D}_2$ we have to
replace  $\Gamma^\mu $ by $\Gamma^{\mu +4}$.

In terms of the connection introduced in Lemma~\ref{nabla-com} and
using (\ref{DGamma}) we identify the relevant operators arising in the
expansion $\mathcal{D}_A^2=\mathcal{D}_\bullet^2+ M^2 +(F_0+F_1)+{\bf
J}(F_0+F_1){\bf J}^{-1}+ 2A{\bf J}A{\bf J}^{-1}$ of Proposition
\ref{germ} as follows:
\begin{align*}
F_0 &=   \left(\begin{array}{cc} 
L_\star(V_{A,\phi})1
+ \frac{i}{4}[\Gamma^\mu,\Gamma^\nu]  L_\star(F^A_{\mu\nu}) 
&
i \Gamma^\mu \Gamma L_\star(D_\mu \phi) 
\\[1ex]
i \Gamma^\mu \Gamma
L_\star(\overline{D_\mu \phi}) & L_\star(V_{B,\phi})1
+ \frac{i}{4}[\Gamma^\mu,\Gamma^\nu] L_\star(F^B_{\mu\nu}) 
\end{array}\right)  \;,
\\
F_1 &=   \left(\begin{array}{cc} 
2i L_\star(A^\mu) \nabla_\mu & 0 \\ 
0 & 2i L_\star(B^\mu) \nabla_\mu
\end{array}\right)\;,
\end{align*}
where
\begin{align*}
V_{A,\phi}&:=\phi \star \bar{\phi} +M(\phi+ \bar{\phi})
+ (g^{-1})^{\mu\nu} (i\partial_\mu A_\nu+A_\mu\star A_\nu)\;,\\
V_{B,\phi} &:= \bar{\phi} \star \phi  +M(\phi+ \bar{\phi})
+ (g^{-1})^{\mu\nu} (i\partial_\mu B_\nu+B_\mu\star B_\nu)\;,\\
F^A_{\mu\nu}&: = \partial_\mu A_\nu - \partial_\nu A_\mu -i(A_\mu \star
A_\nu - A_\nu \star A_\mu)\;,\\
F^B_{\mu\nu}&: = \partial_\mu B_\nu - \partial_\nu B_\mu -i(B_\mu \star
B_\nu - B_\nu \star B_\mu)\;,\\
D_\mu \phi &:= \partial_\mu \phi -i A_\mu\star \phi+i \phi \star
B_\mu -i M(A_\mu-B_\mu)\;.
\end{align*}
We have used
\begin{align*}
\mathcal{D}_1\Gamma^\sigma L_\star(A_\sigma)+
L_\star(A_\sigma)\mathcal{D}_1\Gamma^\sigma 
&= -\Gamma^\sigma [\mathcal{D}_1, L_\star(A_\sigma)]
+ 2i (g^{-1})^{\mu\nu} L_\star (\partial_\mu A_\nu) 
+ 2i L_\star(A^\mu) \nabla_\mu \;.
\end{align*}
According to the general asymptotic expansion we have obtained in
 Proposition \eqref{germ}, the only further commutators we need are
 $[\D^2,F_1]$ and $[\D^2,[\D^2,F_1]]$. Their expression
 will easily follows from the following computation which relies on
 the relations in Lemma~\ref{nabla-com}:
\begin{align*}
&[\mathcal{D}^2,2i L_\star(A^\mu) \nabla_\mu ]
= 
-2i  L_\star\big((g^{-1})^{\rho\sigma} \partial_\rho\partial_\sigma
A^\mu\big)  \nabla_\mu 
-4i L_\star(\partial^\nu A^\mu) \nabla_\nu 
\nabla_\mu 
+ 4 \tilde{\Omega}^2 \Theta_{\mu\nu} L_\star(A^\mu) \tilde{\nabla}^{\nu}\;,
\\
&[\mathcal{D}^2,[\mathcal{D}^2,2i L_\star(A^\mu) 
\nabla_\mu ]
= 
8 i 
L_\star(\partial^\rho \partial^\nu A^\mu) \nabla_\rho 
\nabla_\nu \nabla_\mu + \text{ lower order}\nonumber\;,
\end{align*}

We have already computed the matrix trace of $e^{-t\tilde{\Omega} \Sigma}$
in Lemma \ref{bounds1}, and the result is $16\cosh^4 (\tilde{\Omega}t)$.
Using
\begin{align*}
-i\Sigma&= (ib_\mu-ib_\mu^*)(b^\mu+b^{\mu*})
=g_{\mu\nu}\Big(\Gamma^\nu
+\frac{\tilde{\Omega}}{2}\Theta^{\nu\rho}\Gamma_{\rho+4}\Big) 
g^{\mu\sigma}\Big(\Gamma_{\sigma+4}
+\frac{\tilde{\Omega}}{2}\Theta_{\sigma\tau}\Gamma^{\tau}\Big) \;,
\end{align*}
the other matrix traces follow from the Clifford algebra:
\begin{align*}
\mathrm{tr}_{\mathbb{C}^{16}}\Big(\frac{i}{4} [\Gamma^\mu,\Gamma^\nu]
\cdot e^{-t\tilde{\Omega} \Sigma}\Big) &= 
-8 \tilde{\Omega} \Theta^{\mu\nu} t + \mathcal{O}(t^2)\;,
\nonumber
\\
\mathrm{tr}_{\mathbb{C}^{16}}\Big(\frac{i}{4} [\Gamma^\mu,\Gamma^\nu]
\cdot \frac{i}{4} [\Gamma^\rho,\Gamma^\sigma]\cdot 
e^{-t\tilde{\Omega} \Sigma}\Big) &= 
4(g^{-1})^{\mu\rho}(g^{-1})^{\nu\sigma}-
4 (g^{-1})^{\mu\sigma}(g^{-1})^{\nu\rho}\big) + \mathcal{O}(t)
\nonumber
\\
\mathrm{tr}_{\mathbb{C}^{16}}\big(
i \Gamma^\mu \Gamma 
\cdot i \Gamma^\nu \Gamma\cdot 
e^{-t\tilde{\Omega} \Sigma}\big) &= 16 (g^{-1})^{\mu\nu} 
+ \mathcal{O}(t)\;.
\end{align*}
In terms of the functionals 
$\mathcal{T}_{\mu_1\dots\mu_k} (f) := \mathrm{Tr}_{L^2(\mathbb{R}^4)} 
\big( L_\star(f) \nabla_{\mu_1}\dots\nabla_{\mu_k}
e^{-tH}\big)$ on $\A_\star$ introduced and computed in
Proposition~\ref{Proposition-trace} and the similar functional 
\begin{align*}
\tilde{\mathcal{T}}_{\mu\nu}(f):=\mathrm{Tr}_{L^2(\mathbb{R}^4)}
\big( L_\star(f)\nabla_\mu \tilde{\nabla}_\nu e^{-tH}\big)
\end{align*}
we obtain from Proposition \ref{germ} the trace
$\mathrm{Tr}(e^{-t\mathcal{D}_A^2})$ as follows:
\begin{align}
\mathrm{Tr}\big(e^{-t\mathcal{D}_A^2}\big)
&= e^{-t M^2} \bigg\{16 \cosh^4(\tilde{\Omega} t) \mathrm{Tr}(e^{-tH})
-2t\, \mathcal{T}\big( 16 V_{A,\phi} \big) 
-2t \,  \mathcal{T}_\mu \big(32 i A^\mu\big) 
\nonumber
\\
&+t^2 \,\mathcal{T}\Big(16 V_{A,\phi}\star V_{A,\phi}
+ 16 (g^{-1})^{\mu\nu} D_\mu \phi \star \overline{D_\nu\phi}
+ 8 (g^{-1})^{\mu\rho} (g^{-1})^{\nu\sigma} F^A_{\mu\nu}F^A_{\rho\sigma}
\nonumber
\\*
& \qquad\qquad + 
32i (g^{-1})^{\mu\nu} A_\mu \star \partial_\nu V_{A,\phi}
+16 \tilde{\Omega} \Theta^{\mu\nu}F^A_{\mu\nu}
\Big)
\nonumber
\\
& +t^2\,
\mathcal{T}_\mu\Big(32i A^\mu \star V_{A,\phi}+
32i V_{A,\phi} \star A^\mu
- 64 (g^{-1})^{\nu\rho} A^\nu \star \partial_\rho A^\mu
\Big)
\nonumber
\\
& + 
t^2 \,\mathcal{T}_{\mu\nu}\big(- 64 A^\mu \star  A^\nu \big) 
\nonumber
\\
& -\frac{t^3}{3} \mathcal{T}_{\mu\nu}\Big( 
-4 i ((V_{A,\phi} \star \partial^\mu  A^\nu-
\partial^\mu  A^\nu \star V_{A,\phi} )
\nonumber
\\
& \qquad\qquad
+64 A^\mu (g^{-1})^{\rho\sigma}\partial_\rho\partial_\sigma A^\nu  
+128 A_\rho (g^{-1})^{\rho\sigma} \partial_\sigma \partial^\mu A^\nu
\nonumber
\\
& \qquad\qquad
-64 \big( V_{A,\phi}\star A^\mu\star A^\nu
+ A^\mu\star V_{A,\phi}\star A^\nu
+ A^\mu\star A^\nu\star V_{A,\phi}\big)
\nonumber
\\
& \qquad\qquad
-128i (g^{-1})^{\rho\sigma}
\big( A_\rho{\star} (\partial_\sigma A^\mu) {\star} A^\nu
+ A_\rho\star A^\mu {\star}
(\partial_\sigma A^\nu) + A^\mu{\star} A_\rho {\star}
(\partial_\sigma A^\nu)\big)
\Big)
\nonumber
\\
& -\frac{t^3}{3} \mathcal{T}_{\mu\nu\rho}\Big(
128 A^\mu \star  \partial^\nu A^\rho
-128i A^\mu\star A^\nu \star A^\rho 
\Big)
\nonumber
\\
& -\frac{t^3}{3}
\tilde{\mathcal T}_{\mu\nu} \big( 128 i 
\tilde{\Omega}^2 \Theta^{\rho\nu} 
A^\mu \star  A_\rho\big)
\nonumber
\\
&+ \frac{t^4}{12}
\mathcal{T}_{\mu\nu\rho\sigma}\Big(
-256 A^\mu \star \partial^\nu\partial^\rho A^\sigma
+ 512i A^\mu \star A^\nu \star \partial^\rho A^\sigma
\nonumber
\\
&\qquad\qquad + 256i A^\mu \star 
(\partial^\nu A^\rho) \star A^\sigma 
+ 256 A^\mu \star 
A^\nu \star A^\rho \star A^\sigma 
\Big)\bigg\}
\nonumber
\\
&+\bigg\{ A_\mu \mapsto B_\mu\;,~F^A_{\mu\nu}\mapsto F^B_{\mu\nu}\;,~
V_{A,\phi} \mapsto V_{B,\phi}\;,~D_\mu\phi \leftrightarrow 
\overline{D_\mu\phi} \bigg\}
+ \mathcal{O}(\sqrt{t})\;.
\label{trH1}
\end{align}
The relevant traces have been computed in Proposition \ref{Proposition-trace}.
A similar procedure gives
\[
\tilde{\mathcal{T}}_{\mu\nu}(f)=
\Big(\frac{\tilde{\Omega}}{2\pi\sinh(2\tilde{\Omega}t)}\Big)^2 
\int_{\mathbb{R}^4} dz \; \sqrt{\det g} \,f(z) \Big(
\tilde{\mathcal{N}}_{\mu\nu}
+ Z_\mu \tilde{Z}_\nu\Big) e^{-\tilde{\Omega}\tanh(\tilde{\Omega}t)
 \langle z,gz\rangle}\;,
\]
with $\tilde{Z}_\nu:= -2i (\Theta^{-1} z)_\nu 
- 2\tilde{\Omega}\tanh(\tilde{\Omega}t) (g z)_\nu$
and $\tilde{\mathcal{N}}_{\mu\nu}:=2i (\Theta^{-1}g^{-1})_{\mu\nu}
+i\tilde{\Omega}^2 (g \Theta )_{\mu\nu}
- \tilde{\Omega}\tanh(\tilde{\Omega}t) \delta_{\mu\nu}
$. This shows that the contribution of
$\tilde{\mathcal{T}}_{\mu\nu}(f)$ is suppressed with
$\mathcal{O}(t)$. 
Inserting these traces into (\ref{trH1}) we arrive at
\begin{align}
\label{trH-final}
&\mathrm{Tr}(e^{-t\mathcal{D}_A^2})
\\
&=\frac{2}{\tilde{\Omega}^{4}} t^{-4} 
-\frac{2 M^2}{\tilde{\Omega}^{4}} t^{-3} 
+ \Big(\frac{M^4}{\tilde{\Omega}^4}+\frac{8}{3\tilde{\Omega}^2}\Big) 
t^{-2}
- \Big(\frac{M^6}{3\tilde{\Omega}^4}+\frac{8 M^2}{3\tilde{\Omega}^2} \Big) 
t^{-1}
+ \Big(\frac{52}{45} +\frac{M^8}{12\tilde{\Omega}^4}
+\frac{4 M^4}{3\tilde{\Omega}^2} \Big) 
\nonumber
\\
&-t^{-1}\frac{2-2 M^2 t}{\pi^2} \int d^4 x \sqrt{\det g}
\bigg\{ \phi\star \bar{\phi} +M(\phi + \bar{\phi})
+ \tilde{\Omega}^2 \big( \langle X_A, g X_A\rangle_\star 
-\langle x, g x\rangle_\star \big)
\nonumber\\
&\qquad\qquad\qquad\qquad\qquad +\bar{\phi}
\star \phi +M(\phi + \bar{\phi})
+ \tilde{\Omega}^2 \big( \langle X_B, g X_B\rangle_\star 
-\langle x, g x\rangle_\star \big)
\bigg\}
\nonumber
\\
&+\frac{1}{\pi^2} \int d^4x \sqrt{\det g}  
\bigg\{
2 (g^{-1})^{\mu\nu} D_\mu \phi \star \overline{D_\nu\phi}
\nonumber
\\
&+ \big(\phi\star \bar{\phi} +M(\phi + \bar{\phi}) 
+ \tilde{\Omega}^2 \langle X_A, g X_A\rangle_\star \big)^2
- \big(\tilde{\Omega}^2 \langle x, g x\rangle_\star \big)^2
\nonumber
\\
&+\big(\bar{\phi} \star \phi +M(\phi + \bar{\phi})
+ \tilde{\Omega}^2 \langle X_B, g X_B\rangle_\star \big)^2
- \big(\tilde{\Omega}^2 \langle x, g x\rangle_\star \big)^2
\nonumber
\\
&
+\Big( \frac{1}{2} (g^{-1})^{\mu\rho}(g^{-1})^{\nu\sigma}
- \frac{1}{6} (g^{-1}+\tilde{\Omega}^2 \Theta g\Theta)^{\mu\rho}
(g^{-1}+\tilde{\Omega}^2 \Theta g\Theta)^{\nu\sigma}
\Big) 
\big(F^A_{\mu\nu} \star F^A_{\rho\sigma}
+F^B_{\mu\nu} \star F^B_{\rho\sigma}\big)\bigg\}
\nonumber
\\
&+ \mathcal{O}(\sqrt{t})\;,\nonumber
\end{align}
where
\begin{align*}
X^\mu_A (x)  := x^\mu +\Theta^{\mu\nu} A_\nu\;,\qquad  
\langle X,gY\rangle_\star := g_{\mu\nu} X^\mu \star Y^\nu\;.
\end{align*}
To reduce (\ref{trH1}) to (\ref{trH-final}) we have used
\begin{itemize}\itemsep 0pt
\item[--]  the traciality (\ref{tracial}) of the Moyal product 
and the resulting cyclicity under the integral,

\item[--] integration by parts where appropriate, 

\item[--]  $x^\mu\star f =
 \frac{1}{2}\{x^\mu,f\}_\star 
+  \frac{1}{2} [x^\mu,f]_\star
= \frac{1}{2}\{x^\mu,f\}_\star 
 + i \Theta^{\mu\nu} \partial_\nu f$ where appropriate,

\item[--]  symmetries and antisymmetries in the indices.
\end{itemize}
The matrix $g^{-1}+\tilde{\Omega}^2 \Theta g\Theta$ appearing in front
of the curvature term in (\ref{trH-final}) can be equivalently written
as 
\[
g^{-1}+\tilde{\Omega}^2 \Theta g\Theta
= g^{-1}+ 4g (1-g^{-1})= g^{-1}(1-2g)^2\;.
\]
Since $g^{-1}\geq 1$ according to (\ref{metric}), hence $0\leq g\leq
1$, we have $0\leq g^{-1}+\tilde{\Omega}^2 \Theta g\Theta \leq
g^{-1}$, showing that the matrix in front of the curvature term in
(\ref{trH-final}) is strictly positive. It is minimal for
$g=\frac{1}{2}$, i.e.\ $\tilde{\Omega}^2 (\Theta^t
\Theta)_{\mu\nu}=4\delta_{\mu\nu}$.  The curvature $F_{\mu\nu}$ can also be
expressed in terms of the covariant coordinates,
$[X_A^\mu,X_A^\nu]_\star = i\Theta^{\mu\nu}+ i \Theta^{\mu\rho}
\Theta^{\nu\sigma} F^A_{\rho\sigma}$.

\smallskip

With  the moments $ \chi_{-n}
=\int_0^\infty ds \;s^{n-1}\chi(s)$ of the ``characteristic function'' and
$\chi_0=\chi(0)$, we identify the spectral action (\ref{specaction-gen})
as 
\begin{align}
&S_\Lambda(\mathcal{D}_A)
=\frac{2\Lambda^8}{\tilde{\Omega}^{4}} \chi_{-4}
-\frac{2 M^2 \Lambda^6}{\tilde{\Omega}^{4}} \chi_{-3}
+ \Big(\frac{M^4\Lambda^4}{\tilde{\Omega}^4}
+\frac{8\Lambda^4}{3\tilde{\Omega}^2}\Big) \chi_{-2}
- \Big(\frac{M^6\Lambda^2}{3\tilde{\Omega}^4}
+\frac{8 M^2\Lambda^2}{3\tilde{\Omega}^2} \Big)  \chi_{-1} 
\nonumber
\\
&\qquad \qquad + \Big(\frac{52}{45} +\frac{M^8}{12\tilde{\Omega}^4}
+\frac{4 M^4}{3\tilde{\Omega}^2} \Big) \chi_0
\nonumber
\\
&+\frac{\chi_0}{\pi^2} \int d^4x \sqrt{\det g}  
\bigg\{
2 (g^{-1})^{\mu\nu} D_\mu \phi \star \overline{D_\nu\phi}
\nonumber
\\
&+ \Big(\phi\star \bar{\phi} +M(\phi + \bar{\phi}) 
+ \tilde{\Omega}^2 \langle X_A, g X_A\rangle_\star 
+M^2 - \frac{\chi_{-1}}{\chi_0}\Lambda^2 \Big)^2
- \Big(\tilde{\Omega}^2 \langle x, g x\rangle_\star 
+M^2 - \frac{\chi_{-1}}{\chi_0}\Lambda^2 \Big)^2
\nonumber
\\
&+\Big(\bar{\phi} \star \phi +M(\phi + \bar{\phi})
+ \tilde{\Omega}^2 \langle X_B, g X_B\rangle_\star 
+M^2 - \frac{\chi_{-1}}{\chi_0}\Lambda^2 \Big)^2
- \Big(\tilde{\Omega}^2 \langle x, g x\rangle_\star 
+M^2 - \frac{\chi_{-1}}{\chi_0}\Lambda^2 \Big)^2
\nonumber
\\
&
+\Big( \frac{1}{2} (g^{-1})^{\mu\rho}(g^{-1})^{\nu\sigma}
- \frac{1}{6} (g^{-1}+\tilde{\Omega}^2 \Theta g\Theta)^{\mu\rho}
(g^{-1}+\tilde{\Omega}^2 \Theta g\Theta)^{\nu\sigma}
\Big) 
\big(F^A_{\mu\nu} \star F^A_{\rho\sigma}
+F^B_{\mu\nu} \star F^B_{\rho\sigma}\big)\bigg\}
\nonumber
\\
& + \mathcal{O}(\Lambda^{-1})\;.
\label{SpecAct}
\end{align}
The final result (\ref{SpecAct}) for the spectral action agrees, up to
typos, with the result obtained in \cite{Grosse:2007jy}. We recall
that with the cumbersome computational method of \cite{Grosse:2007jy}
it was only possible to identify the part of the spectral action at
most bilinear in the gauge fields $A,B$. By gauge-invariant completion
it was argued that the total spectral action has to be
(\ref{SpecAct}).  In \cite{Grosse:2007jy} the $\Theta$-matrix was
chosen as $\Theta= \theta \left(\begin{array}{cc} i\sigma_2 & 0 \\ 0 &
   i\sigma_2\end{array}\right)$. In terms of
$\Omega:=\frac{\theta\tilde{\Omega}}{2}$ this choice leads to
$(g^{-1})^{\mu\nu}=(1+\Omega^2)\delta^{\mu\nu}$ and $\sqrt{\det
 g}=\frac{1}{(1+\Omega^2)^2}$. Up to the global factor of 2 due to
the real structure, the few differences in the
prefactors\footnote{These are $
 \Big(\frac{(1+\Omega^2)^2}{2}-\frac{(1-\Omega^2)^4}{6(1+\Omega^2)^2}\Big)$
 versus
 $\Big(\frac{(1-\Omega^2)^2}{2}-\frac{(1-\Omega^2)^4}{3(1+\Omega^2)^2}\Big)
$ in \cite{Grosse:2007jy}
 in front of $F_{\mu\nu}F^{\mu\nu}$ and $\frac{(1+\Omega^2)}{2}$
 versus $\frac{1}{2}$ in \cite{Grosse:2007jy} in front of
 $(D\phi)^2$.}  are easily identified as typos in
\cite{Grosse:2007jy}.  We finish by a brief discussion of the spectral
action:

\begin{itemize}

\item The square of covariant
coordinates $X_A,X_B$ combines with the Higgs field $\phi$ to a 
non-trivial potential.
This was not noticed in \cite{deGoursac:2007gq,Grosse:2007dm}. 
We observe here a much deeper unification of the continuous geometry
described by Yang-Mills fields and discrete geometry described by the
Higgs field than previously in almost-commutative geometry. 

\item The coefficient in front of the
Yang-Mills action is strictly positive for any real-valued
$\tilde{\Omega}$. In the
bosonic model of \cite{deGoursac:2007gq,Grosse:2007dm} there was only
the analogue of the negative part, which leads to problems with the
field equations.

Unlike the scalar model renormalized in \cite{Grosse:2004yu} where
$\Omega=\frac{\theta \tilde{\Omega}}{2}$ can by Langmann-Szabo duality
be restricted to $\Omega\in[0,1]$, the full spectral action
(\ref{SpecAct}) does not have a distinguished frequency parameter
$\tilde{\Omega}>0$.

\item  The action (\ref{SpecAct}) is invariant under
gauge transformations 
\begin{align*}
\phi+M &\mapsto u_A\star(\phi+M)\star \overline{u_B}\;, &
X_{A\mu} &\mapsto u_A\star X_{A\mu}
\star \overline{u_A}\;, &
X_{B\mu} &\mapsto u_B\star X_{B\mu}\star \overline{u_B}\;, 
\end{align*}
where $u_A,u_B\in \mathcal{U}(\mathcal{A}_\star \oplus \mathbb{C})$
are unital elements of the minimal unitization. 

\item For any value of the free parameter $\frac{M^2\chi_0}{\Lambda^2
   \chi_{-1}}$, the action contains $(A,B,\phi)$-linear terms which
 lead to a complicated vacuum which is \emph{not} attained at
 vanishing $A,B,\phi$. Since $A,B,\phi$ are Schwartz functions, the
 \emph{formal} vacuum solution $X_A=0=X_B$ and
 $\phi+M=\sqrt{\frac{\chi_{-1}}{\chi_0}} \Lambda$ is excluded. An
 enlargement of Schwartz class function to e.g.\ polynomially bounded
 functions does not help either, because then we are not allowed to
 expand the Gau\ss{}ian
 $e^{-\tilde{\Omega}\tanh(\tilde{\Omega}t)\langle x,gx\rangle_\star}$
 in $t$, making the spectral action different from 
 (\ref{SpecAct}). 

\item 
If we \emph{formally} regard $\phi+M,X_A,X_B$ as dynamical variables
of the model, then (\ref{SpecAct}) can be viewed as
translation-invariant with respect to 
\begin{align*}
\phi(x)+M &\mapsto \phi(x{+}a)+M\;, &
X_{A}(x) &\mapsto  X_{A}(x{+}a) \;, &
X_{B}(x) &\mapsto  X_{B}(x{+}a) \;.
\end{align*}
This would clear away a frequent objection against the
renormalizable $\phi^4_4$-models, breaking of translation
invariance. However, this transformation leaves the space of 
Schwartz class functions for $A,B,\phi$, so that translation
invariance remains broken in the consistent
spectral action. 

\item The vacuum part of the spectral action is finite. In
 general, the heat kernel expansion for non-compact spectral triples
 is ill-defined, so that a spatial regularization of the operator
 trace is unavoidable. See e.g.\ \cite{Gayral:2004ww}. The oscillator
 potential is one of many possibilities. We want to advertise the
 point of view that if one takes the spectral action principle
 serious, \emph{the spatial regularization is part of the geometry}.
 The removal of the spatial regularization must be carefully studied.
 In general, we should expect that other limiting procedures such as
 those of quantum field theory make it impossible to remove the
 regularization (UV/IR).

\end{itemize}

\begin{appendix}
\section{Locally compact noncommutative spin manifolds} 
\label{LCNSM}
\begin{Definition}
\label{Def:ST}
A {\tt\itshape non-compact spectral triple} is given by the data
$(\mathcal{A},\mathcal{B},
\mathcal{H},\mathcal{D},J,\Gamma,\boldsymbol{c})$ satisfying
conditions 0-6 given below. The data consist of a 
non-unital algebra $\mathcal{A}$ acting faithfully (via a representation
denoted by $\pi$) by bounded operators on the Hilbert space $\mathcal{H}$;
 a preferred unitization $\mathcal{B}$ of $\mathcal{A}$
acting by bounded operators on the same Hilbert space; and an essentially 
self-adjoint unbounded operator $\mathcal{D}$ on $\mathcal{H}$ such that
$[\mathcal{D},\pi(a)]$ extends to a bounded operator for any $a\in
\mathcal{B}$. The spectral triple is said to be
{\tt\itshape even} if there exists a 
$\mathbb{Z}_2$-grading operator $\Gamma$ on $\mathcal{H}$ satisfying 
$\Gamma^2=1$, for which 
$\mathcal{B}$ is even and   $\mathcal{D}$ is odd. 
The spectral triple is said to be {\tt\itshape real} if there exists an 
antiunitary operator $J$ on $\mathcal{H}$ which satisfies conditions 4
and 5 below.
\begin{enumerate}
\item[0.] {\tt\itshape Compactness.}
 \\[\smallskipamount]
 The operator $\pi(a)(\mathcal{D}-\lambda)^{-1}$ is compact for all
 $a\in \mathcal{A}$ and $\lambda$ in the resolvent set of
 $\mathcal{D}$.
 \\[\smallskipamount]
 For any $a \in \mathcal{B}$, both $\pi(a)$ and
 $[\mathcal{D},\pi(a)]$ belong to $\bigcap_{n =1}^\infty \mathrm{dom}
 (\delta^n)$, with $\delta (T):=[\langle\mathcal{D}\rangle,T]$ and
 $\langle \mathcal{D}\rangle:=(\mathcal{D}^2+1)^{\frac{1}{2}}$.

For any element $b$ of the algebra $\Psi_0(\mathcal{A})$ generated by
$\delta^n(\pi(\mathcal A))$ and $\delta^n([\mathcal{D},\pi(\mathcal
A)])$, the function $\zeta_b(z):=\mathrm{Tr}(b
\langle\mathcal{D}\rangle^{-z})$ is well defined and holomorphic for
$\Re(z)$ large and analytically continues to $\mathbb{C}\setminus
\mathrm{Sd}$ for some discrete set $\mathrm{Sd} \subset \mathbb{C}$
(the dimension spectrum). Moreover the dimension spectrum is said to
be simple if all the poles are simples, finite if there is
$k\in\mathbb N$ such that all the poles are order at most $k$ and if
not, infinite.

\item {\tt\itshape Metric dimension.}
 \\*[\smallskipamount]
 For the metric dimension $d:=\sup \{ \Re(z),\, z \in \mathrm{Sd}\}$,
 the operator $ \pi(a) \langle \mathcal{D}\rangle^{-d}$ belongs to
 the Dixmier ideal $\mathcal{L}^{1,\infty}(\mathcal H)$ for any $a
 \in \mathcal{A}$. Moreover, for any Dixmier trace, the map
 $\A_+ \ni a \mapsto \mathrm{Tr}_\omega( \pi(a) \langle
 \mathcal{D}\rangle^{-d})$ is non-vanishing.

\item {\tt\itshape Finiteness.}
\\*[\smallskipamount]
The algebra $\mathcal{A}$ and its preferred unitization
$\mathcal{B}$ are pre-$C^\star$-algebras, i.e.\
each one is a $\star$-subalgebra of some $C^*$-algebra and stable
under holomorphic functional calculus. 

The space of smooth spinors $\displaystyle
\mathcal{H}^\infty:=\bigcap_{k=0}^\infty \mathcal{H}^k$, with 
$\mathcal{H}^k:=
\mathrm{dom}(\mathcal{D}^k) $ completed with norm 
$\|\xi\|_k^2:=\|\xi\|^2 + \|\mathcal{D}^k \xi\|^2$, 
is a finitely generated projective $\mathcal{A}$-module
$p \mathcal{A}^m$, for some $m \in \mathbb{N}$ and some projector
$p=p^2=p^* \in M_m(\mathcal{B})$. The composition of the
Dixmier trace with the induced
hermitian structure $\langle ~,~\rangle_{\mathcal{A}} :
\mathcal{H}^\infty \times \mathcal{H}^\infty \to \mathcal{A}$
coincides with the scalar product $(~,~)$ on $\mathcal{H}^\infty$,
\[
(\xi,\eta) = \mathrm{Tr}_\omega\Big( \langle \xi,\eta
\rangle_{\mathcal{A}} \,\langle\mathcal{D}\rangle^{-d}\Big)\;,\qquad 
\xi,\eta \in \mathcal{H}^\infty\;.
\]

\item {\tt\itshape Reality.}
\\*[\smallskipamount]
The operator $J$ defines a real structure of KO-dimension 
$k\in \mathbb{Z}_8$. This means 
\begin{align}
J^2 &= \varepsilon\;, & J\mathcal{D} &= \varepsilon' \mathcal{D}J\;, &
J\Gamma &= \varepsilon'' \Gamma J \quad \text{(even case)}
\end{align}                        
with signs $\varepsilon ,\varepsilon' ,
\varepsilon'' \in \{ -1,1\}$ given as a function of $k \mod 8$ by
\[
\begin{array}{|c| r r r r r r r r|} \hline 
k  &0 &1 &2 &3 &4 &5 &6 &7 \\ \hline \hline
\varepsilon  &1 & 1&-1&-1&-1&-1& 1&1 \\
\varepsilon' &1 &-1&1 &1 &1 &-1& 1&1 \\
\varepsilon''&1 &{}&-1&{}&1 &{}&-1&{} \\  \hline
\end{array}
\]
Additionally, the action $\pi$ of $\mathcal{B}$ on $\mathcal{H}$ 
satisfies the commutation rule $[\pi(f),\pi^o(g)] = 0$ for all $f,g
\in\mathcal{B}$, where $\pi^o(g)=J\pi(g^*)J^{-1}$ is the 
action of the opposite algebra $\mathcal{B}^o$.

\item {\tt\itshape First order.}
\\*[\smallskipamount]
$[[\mathcal{D},\pi(f)],\pi^o(g)] = 0$ for all $f,g \in \mathcal{B}$.

\item  {\tt\itshape Orientability.}
\\*[\smallskipamount]
Whenever the metric dimension $d$ is an integer, there is a 
Hochschild $d$-cycle $\boldsymbol{c}$ on
$\mathcal{B}$ with values in 
$\mathcal{B}\otimes \mathcal{B}^o$, i.e.\ a finite sum
of terms $(a_0\otimes b_0) \otimes a_1 \otimes \dots \otimes a_d$. Its
representation $\pi_\D( \boldsymbol{c})$ with 
$\pi_\D((a_0\otimes b_0) \otimes a_1 \otimes \dots \otimes a_d)
:= \pi(a_0)J\pi(b_0^*)J^{-1} [\mathcal{D},\pi(a_1)]\cdots 
[\mathcal{D},\pi(a_d)]$ satisfies $\pi_\D(\boldsymbol{c})^2=1$ and 
defines the volume form on $\mathcal{A}$, i.e.\ 
\[
\phi_{\boldsymbol{c}}(f_0,\dots,f_{d}) = 
\mathrm{Tr}_\omega\big(\pi_\D(\boldsymbol{c}) \pi(f_0) 
[\mathcal{D},\pi(f_1)]\cdots  
[\mathcal{D},\pi(f_d)]
\langle\mathcal{D}\rangle^{-d}\big)
\]
provides a non-vanishing Hochschild $d$-cocycle $\phi_d$ on
$\mathcal{A}$. 
\end{enumerate}
\end{Definition}

\end{appendix}

\section*{Acknowledgments}

R.W. would like to thank Harald Grosse for the long-term collaboration
which initiated this paper through the preprint \cite{Grosse:2007jy}. 
Both authors would like to thank Alan Carey for stimulating
discussions.


\begin{thebibliography}{99}

\bibitem{CGRS1}
A.~Carey, V.~Gayral, A.~Rennie and F.~Sukochev,
``Integration on locally compact noncommutative spaces,'' 
[arXiv:math.OA/0912.2817].

\bibitem{CGRS2}
A.~Carey, V.~Gayral, A.~Rennie and F.~Sukochev,
``Index theory for locally compact noncommutative geometries,'' 
[arXiv:math.OA/1107.0805].

\bibitem{Chamseddine:1996zu}
A.~H.~Chamseddine and A.~Connes,
``The spectral action principle,''
Commun.\ Math.\ Phys.\  {\bf 186} (1997) 731.

\bibitem{Chamseddine:2005zk}
A.~H.~Chamseddine and A.~Connes,
``Scale invariance in the spectral action,''
J.\ Math.\ Phys.\  {\bf 47} (2006) 063504.

\bibitem{Connes:1996gi}
A.~Connes,
``Gravity coupled with matter and the foundation of non-commutative
geometry,''
Commun.\ Math.\ Phys.\  {\bf 182} (1996) 155.

\bibitem{Connes:2006qv}
A.~Connes,
``Noncommutative geometry and the standard model with neutrino mixing,''
JHEP {\bf 0611} (2006) 081.

\bibitem{Connes:2008??} 
A. Connes, ``On the spectral characterization of manifolds,'' 
[arXiv:math.OA/0810.2088]. 

\bibitem{Connes:1990qp}
A.~Connes and J.~Lott,
``Particle models and noncommutative geometry (expanded version),''
Nucl.\ Phys.\ Proc.\ Suppl.\  {\bf 18B} (1991) 29.

\bibitem{Dabrowski:2003??} 
L.~D\c{a}browski and A.~Sitarz, ``Dirac 
Operator on the Standard Podle\'s Quantum Sphere,''
Center Banach Center Publ.\ {\bf 61} (2003) 49.

\bibitem{Disertori:2006uy}
M.~Disertori and V.~Rivasseau,
``Two and three loops beta function of noncommutative $\phi^4_4$ theory,''
Eur.\ Phys.\ J.\  C {\bf 50} (2007) 661.

\bibitem{Disertori:2006nq}
M.~Disertori, R.~Gurau, J.~Magnen and V.~Rivasseau,
``Vanishing of beta function of noncommutative $\phi^4_4$ theory to all
orders,''
Phys.\ Lett.\  B {\bf 649} (2007) 95.

\bibitem{Gayral:2003dm}
V.~Gayral, J.~M.~Gracia-Bond\'{\i}a, B.~Iochum, T.~Sch\"ucker and 
J.~C.~V\'arilly,
``Moyal planes are spectral triples,''
Commun.\ Math.\ Phys.\  {\bf 246} (2004) 569.

\bibitem{Gayral:2004ww}
V.~Gayral and B.~Iochum,
``The spectral action for Moyal planes,''
J.\ Math.\ Phys.\  {\bf 46} (2005) 043503.

\bibitem{deGoursac:2007gq}
A.~de Goursac, J.~C.~Wallet and R.~Wulkenhaar,
``Noncommutative induced gauge theory,''
Eur.\ Phys.\ J.\ C {\bf  51} (2007)  977. 

\bibitem{Phobos}
J. M. Gracia-Bond\'{\i}a and J. C. V\'arilly,
``Algebras of distributions suitable for phase-space quantum mechanics
I,'' J.\ Math.\ Phys.\ {\bf 29} (1988) 869.

\bibitem{Grosse:2007dm}
H.~Grosse and M.~Wohlgenannt,
``Induced Gauge Theory on a Noncommutative Space,''
Eur. Phys. J. C  {\bf 52} (2007) 435.

\bibitem{Grosse:2004by}
H.~Grosse and R.~Wulkenhaar,
``The beta-function in duality-covariant noncommutative $\phi^4$-theory,''
Eur.\ Phys.\ J.\  C {\bf 35} (2004) 277.

\bibitem{Grosse:2003aj}
H.~Grosse and R.~Wulkenhaar,
``Power-counting theorem for non-local matrix models and renormalisation,''
Commun.\ Math.\ Phys.\  {\bf 254} (2005) 91.

\bibitem{Grosse:2004yu}
H.~Grosse and R.~Wulkenhaar,
``Renormalisation of $\phi^4$-theory on noncommutative $\mathbb{R}^4$ 
in the matrix base,''
Commun.\ Math.\ Phys.\  {\bf 256} (2005) 305.

\bibitem{Grosse:2005da} 
H.~Grosse and R.~Wulkenhaar, ``Renormalisation
of $\phi^4$-theory on non-commutative $\mathbb{R}^4$ to all
orders,'' Lett.\ Math.\ Phys.\ {\bf 71} (2005) 13.

\bibitem{Grosse:2007jy}
H.~Grosse and R.~Wulkenhaar,
``8D-spectral triple on 4D-Moyal space and the vacuum of noncommutative gauge
theory,''
[arXiv:hep-th/0709.0095].

\bibitem{Grosse:2009pa}
H.~Grosse and R.~Wulkenhaar,
``Progress in solving a noncommutative quantum field theory in four
dimensions,''
[arXiv:hep-th/0909.1389].

\bibitem{Gurau:2005gd}
R.~Gurau, J.~Magnen, V.~Rivasseau and F.~Vignes-Tourneret,
``Renormalization of non-commutative $\phi^4_4$ field theory in $x$-space,''
Commun.\ Math.\ Phys.\  {\bf 267} (2006) 515.

\bibitem{Gurau:2006yc}
R.~Gurau and V.~Rivasseau,
``Parametric representation of noncommutative field theory,''
Commun.\ Math.\ Phys.\  {\bf 272} (2007) 811.

\bibitem{Magnen:2007uy}
J.~Magnen and V.~Rivasseau,
``Constructive $\phi^4$ field theory without tears,''
Ann.\ Henri Poincar\'e {\bf 9} (2008) 403.

\bibitem{Rennie:2003??}
A.~Rennie, ``Smoothness and locality for nonunital spectral triples,''
K-theory {\bf 28} (2003) 127.

\bibitem{Rivasseau:2007ab} V.~Rivasseau, ``Non-commutative
  renormalization,'' in {\em Quantum spaces}, Prog.\ Math.\ Phys.\
  {\bf 53} (2007), Birkh\"auser, Basel.

\bibitem{Rivasseau:2007fr}
V.~Rivasseau,
``Constructive Matrix Theory,''
J.\ High Energy Phys. {\bf 9} (2007) 008.


\bibitem{Rivasseau:2005bh}
V.~Rivasseau, F.~Vignes-Tourneret and R.~Wulkenhaar,
``Renormalization of noncommutative $\phi^4$-theory by multi-scale  analysis,''
Commun.\ Math.\ Phys.\  {\bf 262} (2006) 565.


\bibitem{SimonTrace}
B. Simon,
{\em Trace Ideals and Their Applications},
London Math.\ Soc.\ Lecture Notes {\bf 35},
Cambridge University Press, Cambridge, 1979.

\bibitem{Wulkenhaar:2009pv}
 R.~Wulkenhaar,
``Non-compact spectral triples with finite volume,''
in  {\em Quanta of Maths},  Clay Math.\ Proc.\ {\bf 11} (2010) 617,
Amer.\ Math.\ Soc.,\ Providence, RI.

\end{thebibliography}
\end{document}